\newtheorem{lem}{Lemma}[section]
\newtheorem{prop}{Proposition}[section]
\newtheorem{cor}{Corollary}[section]
\newtheorem{defi}{Definition}[section]
\newtheorem{exm}{Example}[section]
\newtheorem{rmk}{Remark}[section]
\def \tr{{\rm tr}}
\def \Tr{{\rm Tr}}
\def \TR{{\rm TR}}
\def \Res{{\rm Res}}
\def \res{{\rm res}}
\def \A{\mathcal{A}^n_{\theta}}
\def\Int{\int\hspace{-0.35cm}{-} \,}
\def\Wres{{\rm Wres}}
\def\sgn{{\rm sgn}}
\def\SF{{\rm SF}}
\def\dvol{{\rm dvol}}
\def\index{{\rm index}}
\def\f.p.{{\rm f.p.}}
\def\spec{{\rm spec}}
\def\bos{{\rm bos}}
\def\fer{{\rm fer}}
\def\ord{{\rm ord}}
\def\T^3{{\rm C^{\infty}(\mathbb{T}^3_{\theta})}}
\title{\bf On Certain Spectral Invariants of  Dirac Operators on Noncommutative Tori}
\author{ $ $\\ Ali Fathi, Masoud Khalkhali} 
\date{}
\begin{document}

\maketitle

\begin{center} 

Department of Mathematics, The University of Western Ontario\\London, ON, Canada\footnote{{\it E-mail addresses}:  
afathiba@uwo.ca, masoud@uwo.ca}
 \\

\end{center}

\begin{abstract}
  The spectral eta function for certain families of Dirac operators on  noncommutative $3$-torus is considered and the regularity at zero is proved.  By using  variational techniques, we show that $\eta_{D}(0)$ is a conformal invariant. By studying the Laurent expansion at zero of $\text{TR} (|D|^{-z})$, the conformal invariance of $\zeta'_{|D|}(0)$ for noncommutative $3$-torus is proved. Finally, for the coupled Dirac operator, a local formula for the variation $\partial_A\eta_{D+A}(0)$  is derived which is the  analogue of  the  so called induced Chern-Simons term in quantum field theory literature.  

\end{abstract}

\tableofcontents

\section{Introduction}

In this paper  we study  the variations of spectral zeta and eta functions  
$\zeta_{|D|} (z)= \TR(|D|^{-z})$ and $\eta_D (z)=\text{TR} (D|D|^{-z-1}) $  associated to  certain families of  Dirac operators on noncommutative $3$-torus.  In the classical case  the canonical trace TR  \cite{Kontsevich-Vishik1995} provides a unified method of studying various spectral functions of elliptic operators and their variations.  Connes'  pseudodifferential calculus for  noncommutative tori  makes it possible to define a suitable notion of noncommutative canonical trace  \cite{det-work}, and translate some of the  properties of the canonical trace on manifolds to noncommutative settings.  Among these, a  fundamental result is the explicit description of the Laurent expansion at zero of the function $\text{TR} (AQ^{-z})$ where $A$ and $Q$ are classical elliptic operators \cite{Paycha-Rosenberg2006}. This result enables us to prove the regularity of $\zeta_{|D|}(z)$ and $\eta_D(z)$ at $z=0$,  and also gives a local description for variations of $\eta_D(0)$ and $\zeta'_{|D|}(0)$. In particular, we show that $\eta_D(0)$ is constant over the family $\{e^{th}De^{th}\}$ and hence is a conformal invariant of noncommutative $3$-torus. Also, using the local description for conformal variation of $\zeta'_{|D|}(0)$,  we prove that this quantity is a conformal invariant of noncommutative $3$-torus.
This paper is organized as follows. In Section 2  we recall the definition of a spectral triple which is the  basic ingredient in the definition of a noncommutative Riemannian space \cite{Connesbook1994}. Our main example is the spin spectral triple for  noncommutative tori  and its conformal perturbation first proposed in 
\cite{Cohen-Connes1992, Connes-Tred2009}.  In Section 3 we give a brief review  of Connes' pseudodifferential  calculus  for noncommutative tori from \cite{Connes1980, Connes-Tred2009}, and recall 
 the extension of  the Kontsevich-Vishik  canonical trace  to the setting of noncommutative tori from \cite{det-work}.  It should be mentioned that  this is also done in \cite{Paycha-Levy2014}   where  one works   with toroidal symbols instead  of Connes' symbols.
 
 In Section 4 we study the eta function associated to the Dirac operators of the conformally perturbed  spectral triples  $(\T^3,\mathcal{H},e^{th}De^{th}),$  and also to the coupled Dirac operator of the spectral triple $(\T^3,\mathcal{H},D+A)$. By exploiting the developed  canonical trace, the regularity of the eta function at zero in above cases will be proved. Next, by using variational techniques we show that the value of the eta function at zero is constant over a conformally perturbed family.
Also, by considering the spectral triple  $(\T^3,\mathcal{H},D$) and the family $D_t=D+tu^*[D,u]$ for a unitary element $u\in\T^3$, we relate the difference $\eta_{D_1}(0)-\eta_{D_0}(0)$ to the spectral flow of the family $D_t$ and  give a local formula for index of the operator $PuP$. This is the analogue of the result of Getzler \cite{Getzler-flow},  in the case of noncommutative $3$-torus. 

  In Section 5  we consider the spectral zeta function $\zeta_{|D|}(z)=\TR(|D|^{-z})$ and study the conformal variation of the spectral value $\zeta'_{|D|}(0)$ within the framework of the canonical trace.
  We show that for the noncommutative $3$-torus this quantity is a conformal invariant. In even dimensions though, the conformal variation is not zero and hence conformal anomaly exists. Following \cite{Connes-Moscovici2014} we give a local formula for the conformal variation of $\zeta_{\Delta}'(0)$ in the case of noncommutative two torus. 
 
  Finally, in section 6 we consider the coupled Dirac operator $D+A$ and study the value $\zeta'_{D}(0)$ where $\zeta_D(z)=\TR(D^{-z})$. Since the spectrum of $D$ is extended along the real line, there is an ambiguity in the definition of the complex power $D^{-z}$ and hence in the value $\zeta'_D(0)$. In odd dimensions, this ambiguity can be expressed in terms of $\eta_{D+A}(0)$ and hence has a dependence on the coupled gauge field $A$. This dependence can be computed by a local formula and in  physics literature it is usually referred to as the induced  Chern-Simons term generated by the coupling of a massless fermion to a classical gauge field (cf. e.g. \cite{QFT-analytic}). We give an analogue of this computation and the local formula in the case of noncommutative $3$- torus.

Conformal  and complex geometry of    noncommutative two tori were first   studied in   the seminal  work  of Connes and Tretkoff \cite{Connes-Tred2009}   where a Gauss-Bonnet theorem was  proved for a  conformally perturbed  metric (cf.  \cite{Cohen-Connes1992}
for a preliminary version). 
This result was  extended in  \cite{Masoud-Farzad2012} where the Gauss-Bonnet theorem was  proved  for  metrics in all translation invariant conformal structures. 
 The  problem of  computing the scalar curvature of  the curved noncommutative two 
torus was fully settled  in \cite{Connes-Moscovici2014}, and   in \cite{Farzad-Masoud2013},  and in \cite{Farzad-Masoud2014, FF1} in  the four dimensional case.   Other  works on  the spectral geometry and heat kernel invariants of curved noncommutative tori  include
\cite{Marcolli-Tanvir2012, Ali-Masoud2014, dabsit1,  dabsit2, FF1}.   The computation of the curvature of the determinant line bundle in the sense of Quillen for certain families of   Dirac operators on noncommutative tori  was carried out in \cite{det-work}.

We would like to thank  Asghar Ghorbanpour for useful  discussions on the subject of this paper. MK  would like to thank 
the Hausdorff Institute in Bonn for its hospitality and support while  this work was being  completed.

\section{Noncommutative  geometry framework }
In this section we recall the basic ingredients in the definition of a noncommutative geometry. Our main example is the spin spectral triple for  noncommutative tori on which all the material in this paper is based. 
The data of a noncommutative Riemannian geometry is encoded in  a spectral triple \cite{Connes1980}.

\begin{defi}
  A spectral triple $(\mathcal{A},\mathcal{H},D)$ is given by an involutive unital (possibly noncommutative) algebra $\mathcal{A}$, a representation $\pi:\mathcal{A}\longrightarrow\mathcal{B}(\mathcal{H})$ on  a Hilbert space $\mathcal{H}$,  and a self-adjoint densely defined operator $D: Dom (D) \subset \mathcal{H} \longrightarrow\mathcal{H}$ with
compact resolvent and the property that $[D, \pi(a)]$ is bounded for any $a\in\mathcal{A}$.
\end{defi}
A spectral triple $(\mathcal{A},\mathcal{H},D)$ is called {\it even}  if there exist a self-adjoint unitary operator $\Gamma:\mathcal{H}\to\mathcal{H}$,  such that
$a\Gamma=\Gamma a,$
for $a\in\mathcal{A}$ 
and 
$D\Gamma=-\Gamma D.$ 
The operator $\Gamma$ induces a grading $\mathcal{H}=\mathcal{H}^+\oplus\mathcal{H}^-$ with respect to which the Dirac operator is odd,
$$D=
\begin{bmatrix}
0&D^-\\
D^+&0
\end{bmatrix}.$$

It can be shown that the triple $(C^{\infty}(M), L^2(M,S), \slashed{D})$, consisting of $\mathcal{A}=C^{\infty}(M)$, the  algebra of smooth functions on a closed  Riemannian spin manifold $(M,g)$,  and the spin Dirac operator $\slashed{D}$ on the Hilbert space  $\mathcal{H}= L^2(M,S)$ of $ L^2$-spinors satisfies the requirements of a spectral triple (cf. e.g. \cite{Green-book'}). In even dimensions, the spinor bundle admits a grading and we have $S=S^{+}\oplus S^{-}$ with respect to which the Dirac operator is odd. Therefore in even dimensions the spectral triple $(C^{\infty}(M), L^2(M,S), \slashed{D})$ is even.

\subsection{Geometry of noncommutative tori}

 Let  $\Theta\in M_n(\mathbb{R})$ be a skew symmetric matrix.  The noncommutative $n$-torus $C(\mathbb{T}^n _{\Theta})$
is defined  to be the universal $C^{*}$-algebra generated by the unitaries $U_k$ for $k\in\mathbb{Z}^n$ and relations,
 $$U_kU_l=e^{\pi i\Theta(k,l)}U_{k+l},\quad k,l\in\mathbb{Z}^n.$$
  Consider the standard basis $\left\{e_i\right\}$ for $\mathbb{R}^n$ and let $u_i=U_{e_i}$. Then it follows that 
  $$u_ku_l=e^{2\pi i\theta_{kl}}u_lu_k,$$
  where $\theta_{kl}=\Theta(e_k,e_l)$.
 The smooth noncommutative $n$-torus $C^{\infty}(\mathbb{T}^n_{\Theta})$  is defined to be  the Fr\'{e}chet $*-$subalgebra of elements with Schwartz coefficients in the Fourier expansion, that is all  
 the $a\in C^{\infty}(\mathbb{T}^n_{\theta})$ that can be written as
 $$a=\sum_{k\in\mathbb{Z}^n}a_k U_k,$$ where $\{a_k\}\in\mathcal{S}(\mathbb{Z}^n)$.
In fact, $C^{\infty}(\mathbb{T}^n_{\theta})$ is a deformation of $C^{\infty}(\mathbb{T}^n)$ and consists of the smooth vectors under the periodic action of $\mathbb{R}^n$ on $C(\mathbb{T}^n _{\Theta})$ given by
$$\alpha_s(U_k)=e^{is.k}U_k,~~s\in\mathbb{R}^n, k\in\mathbb{Z}^n.$$
The algebra $C^{\infty}(\mathbb{T}^n_{\theta})$ is equipped with a tracial state  given by 
$$\tau(\sum_{p\in\mathbb{Z}^n}a_pU_p)=a_0.$$
We also denote by $\delta_{\mu}$ the analogues of the partial derivatives $ \frac{1}{i}\frac{\partial}{\partial x^{\mu}}$ on $C^{\infty}(\mathbb{T}^n)$ which are derivations on the algebra $C^{\infty}(\mathbb{T}^n_{\theta})$ defined by
$$\delta_{\mu}(U_k)=k_{\mu} U_k.$$
These derivations have the following property
$$\delta_{\mu}(a^*)=-(\delta_{\mu}a)^*,$$
and also satisfy  the integration by parts formula
$$\tau(a\delta_{\mu}b)=-\tau((\delta_{\mu}a)b),\quad a,b\in C^{\infty}(\mathbb{T}^n_{\theta}).$$

By GNS construction one gets the Hilbert space $\mathcal{H}_{\tau}$ on which $C^{\infty}(\mathbb{T}^n_{\theta})$ is represented by left multiplication which will be denoted by $\pi(a)$ for $a\in C^{\infty}(\mathbb{T}^n_{\theta})$.
The spectral triple describing the noncommutative geometry of noncommutative $n$-torus consists of the algebra $C^{\infty}(\mathbb{T}^n_{\theta})$ , the Hilbert space $\mathcal{H}=\mathcal{H}_{\tau}\otimes\mathbb{C}^{N}$, where $N=2^{[n/2]}$ with the inner product on $\mathcal{H}_{\tau}$  given by $$\left<a,b\right>_{\tau}=\tau(b^*a),$$ 
and the representation of $C^{\infty}(\mathbb{T}^n_{\theta})$ given by $\pi\otimes 1$.

 The Dirac operator is 
$$D=\slashed{\partial}:=\partial_{\mu}\otimes\gamma^{\mu},$$
where $\partial_{\mu}=\delta_{\mu}$, seen as an unbounded self-adjoint operator on $\mathcal{H}_{\tau}$ and $\gamma^{\mu}$s are Clifford (Gamma) matrices in $M_N(\mathbb{C})$ satisfying the relation
$$\gamma^i\gamma^j+\gamma^j\gamma^i=2\delta^{ij} I_N.$$
In $3-$dimension the Clifford matrices are given by the Pauli spin matrices,
$$\gamma^{1}=
\begin{bmatrix}
0&1\\
1&0
\end{bmatrix} ,
\gamma^{2}=
\begin{bmatrix}
0&-i\\
i&0
\end{bmatrix},
\gamma^{3}=
\begin{bmatrix}
1&0\\
0&-1
\end{bmatrix}.
$$

Consider the chirality matrix $$\gamma=(-i)^m\gamma^1\cdots\gamma^n,$$ where $n=2m$ or $2m+1$. It is seen that $\gamma\cdot\gamma=1$,  and $\gamma$ anti-commutes with every $\gamma^{\mu}$ for $n$ even. The operator $\Gamma=1\otimes\gamma$ on $\mathcal{H}=\mathcal{H}_{\tau}\otimes\mathbb{C}^{2^{[n/2]}}$ defines a grading on $\mathcal{H}$ and for even $n$ one has
$$\Gamma D=-D\Gamma.$$
Therefore $(C^{\infty}(\mathbb{T}^n_{\theta}),\mathcal{H},D)$ is an even spectral triple for $n$ even.

\subsection{Real structure }

\begin{defi}
\normalfont
A real structure on a spectral triple  $(\mathcal{A},\mathcal{H},D)$ is an anti-unitary operator  $J:\mathcal{H}\to\mathcal{H}$ such   $ J (Dom D) \subset Dom D$, and   $[ a,  Jb^*J^{-1}]=0$ for all  $a, b\in\mathcal{A}$. We say $(\mathcal{A},\mathcal{H},D)$ is of KO-dimension $n$ if the operator $J$ satisfies the following commutation relations 
$$J^2=\epsilon 1,~JD=\epsilon' DJ,~J\Gamma=\epsilon''\Gamma J,$$
where the $\epsilon,\epsilon',\epsilon''$ depend on $n\in\mathbb{Z}_8$ according to the following table:
\label{real-table}
$$
\begin{tabular}{|l|l|l|l|l|l|l|l|l|}
  \hline
  n&0&1&2&3&4&5&6&7 \\
  \hline
  $\epsilon$&1&1&-1&-1&-1&-1&1&1 \\
  \hline
   $\epsilon'$&1&-1&1&1&1&-1&1&1 \\
  \hline 
  $\epsilon''$&1&&-1&&1&&-1&\\
  \hline
\end{tabular}
$$
\end{defi}
Note that the real structure operator switches left action to right action. More precisely, Since $\mathcal{A}$ commutes with $J\mathcal{A}J^*$ the real structure gives a representation of the opposite algebra $\mathcal{A}^{op}$ by $b^{\circ}=Jb^*J^*$ and turns the Hilbert space $\mathcal{H}$ into an $\mathcal{A}-$bimodule by
$$a\psi b=aJb^*J^*(\psi)~~a,b\in\mathcal{A},~\psi\in\mathcal{H}.$$
\begin{exm}\label{real-tori}
\normalfont
The spectral triples $(C^{\infty}(\mathbb{T}^n_{\theta}),\mathcal{H}=H_{\tau}\otimes\mathbb{C}^N,D)$ for $N=2^{[n/2]}$ are all equipped with real structure.
The real structure operator $J:H_{\tau}\otimes\mathbb{C}^N\rightarrow H_{\tau}\otimes\mathbb{C}^N$ is given by
$$J=J_0\otimes C_0,$$
where $J_0$ is the Tomita conjugation map associated to the GNS Hilbert space $H_{\tau}$ given by 
$$J_0(a)=a^*,~a\in C^{\infty}(\mathbb{T}^n_{\theta})$$
and $C_0$ is the charge conjugation operator on $\mathbb{C}^N$ (cf. e.g. \cite{Green-book'}).  For $N=2$ we have
$$J=
\begin{bmatrix}
0&-J_0\\
J_0&0
\end{bmatrix}.$$
$J_0^2=1$ for all $n$-mod 8, so by extending the Hilbert space $H_{\tau}$ and coupling  $J_0$ with $C_0$ one can check that the resulting operator $J$ satisfies the requirements for a real structure (see \cite{Green-book'}). 
\end{exm}
\subsection{The coupled Dirac operator}

In this section We  consider the spectral triples obtained by coupling (twisting) the Dirac operator by a gauge potential. We begin by recalling the construction in commutative case.
 
 Consider a compact Riemannian spin manifold $(M,g)$ with the spin Dirac operator $D:L^2(S)\rightarrow L^2(S)$ on spinors. Let $V\rightarrow M$ be a Hermitian vector bundle equipped with a compatible connection $\nabla^V$. Then one constructs the coupled (twisted) spinor bundle $S\otimes V\rightarrow M$ with the extended Clifford action
\begin{equation*}
\pi(\omega).(\psi\otimes f)= (\omega.\psi)\otimes f,
\end{equation*}
where $\pi(\omega)$ is a section of the Clifford bundle over $M$ represented on the space of spinors and $\psi$ and $f$ are sections of spinor bundle and the vector bundle $V$ respectively. 
Also, the vector bundle $S\otimes V$ is equipped with the coupled (twisted) spin connection
\begin{equation*}
 \nabla^{S\otimes V}=\nabla^S\otimes1+1\otimes\nabla^V,
\end{equation*}
 where $\nabla^S$ is the spin connection on $S$. This connection gives rise to the coupled Dirac operator $D_{\nabla^V}$ acting on the sections of $S\otimes V$. When $V$ is a trivial vector bundle, the connection $\nabla^V$ can be globally written as $\nabla^V=d+A$ where $A$ is a matrix of one forms (vector potential) and an easy computation shows that
 \begin{equation*}
 D_A=D+\pi(A).
 \end{equation*}
 
The above construction can be generalized to the setting of spectral triples. Starting with a spectral triple $(\mathcal{A},\mathcal{H},D)$, one can construct a new spectral triple $(\mathcal{A},\mathcal{H},D+A)$ by adding a gauge potential  to the Dirac operator, this corresponds to picking a trivial projective module over the algebra $\mathcal{A}$.

 More precisely, $A$ is a self-adjoint element of 
$$\Omega^1_{D}=\left\{\sum_j a_j^0[D,a_j^1],~a_j^i\in\mathcal{A}\right\}.$$
In fact, $\Omega^1 _{D}$ is the image inside $\mathcal{B}(\mathcal{H})$ of the noncommutative 1-forms on $\mathcal{A}$ under the induced map 
$$\pi: a^0da^1\longrightarrow a^0[D,a^1].$$

Below, we explicitly write down the coupled Dirac operator for the spectral triple $(C^{\infty}(\mathbb{T}^n_{\theta}),\mathcal{H},D+A)$.
First note that for any element $a=\sum_{k\in\mathbb{Z}^n}a_kU_k$ in $C^{\infty}(\mathbb{T}^n_{\theta})$ we have,
$$[D,a]=\partial_{\mu}(a)\otimes\gamma^{\mu}.$$
Any $A\in\Omega^1(C^{\infty}(\mathbb{T}^n_{\theta}))$ is of the form $A=\sum_ia_idb_i$ where $a_i,b_i$ are in $C^{\infty}(\mathbb{T}^n_{\theta})$ and we have
$$\pi(A)=\sum_ia_i\partial_{\mu}(b_i)\otimes\gamma^{\mu}.$$
We denote the elements $ \sum_ia_i\partial_{\mu}b_i$ by $A_{\mu}$ and hence,
$$\pi(A)=A_{\mu}\otimes\gamma^{\mu},$$
also self adjointness of $A$ gives $A_{\mu}^*=A_{\mu}$.
Therefore, the coupled Dirac operator is given by
$$D+A=\slashed{\partial}+\slashed{A},$$
 where again by Feynman slash notation $\slashed{A}=A_{\mu}\otimes\gamma^{\mu}$.
\section{Elliptic theory on noncommutative tori}
 Our aim in this section is to recall the extension of  the Kontsevich-Vishik  canonical trace  to the setting of noncommutative tori from \cite{det-work}.  Alternatively, this is also done in \cite{Paycha-Levy2014}   where  they work  with toroidal symbols instead  of Connes' symbols.
We begin by a brief review of the basics of Connes' pseudodifferential  calculus  for noncommutative tori from \cite{Connes1980, Connes-Tred2009}.

\subsection{Matrix pseudodifferential calculus on $C^{\infty}(\mathbb{T}^n_{\theta})$}

We shall use the multi-index notation $\alpha=(\alpha_1,..,\alpha_n)$, $\alpha_i\geq0$ , $|\alpha|=\alpha_1+...+\alpha_n$, $\alpha!=\alpha_1!...\alpha_n!$, $\delta^{\alpha}=\delta_1^{\alpha_1}...\delta_n^{\alpha_n}$ and $\partial_{\xi}^{\beta}=
\partial_{\xi_1}^{\beta_1}...\partial_{\xi_n}^{\beta_n}$.
\begin{defi}
A matrix valued symbol of order $m$ on noncommutative $n-$torus is a smooth map 
$$\sigma:\mathbb{R}^n \longrightarrow C^{\infty}(\mathbb{T}^n_{\theta})\otimes M_N(\mathbb{C}),$$
such that
$$||\delta^{\alpha}\partial_{\xi}^{\beta}\sigma(\xi)||\leq C_{\alpha, \beta} (1+|\xi|)^{m-|\beta|},$$
and there exists a smooth map $k:\mathbb{R}^n \setminus \{0\}\longrightarrow C^{\infty}(\mathbb{T}^n_{\theta})\otimes M_N(\mathbb{C})$ such that 
$$ \lim_{\lambda\rightarrow\infty}\lambda^{-m}\sigma(\lambda\xi_1,\lambda\xi_2,...,\lambda\xi_n)=k(\xi_1,\xi_2,...,\xi_n).$$
We denote the symbols of order $m$ by $\mathcal{S}^m(C^{\infty}(\mathbb{T}^n_{\theta}))$.
\end{defi}

A matrix pseudodifferential operator associated with $\sigma\in\mathcal{S}^m(C^{\infty}(\mathbb{T}^n_{\theta}))$ is the operator $A_{\sigma}:C^{\infty}(\mathbb{T}^n_{\theta})\otimes\mathbb{C}^N
\longrightarrow C^{\infty}(\mathbb{T}^n_{\theta})\otimes\mathbb{C}^N$ defined by 
$$A_{\sigma}(a)=\int_{\mathbb{R}^n}\int_{\mathbb{R}^n} e^{-is.\xi}\sigma(\xi)\alpha_s(a)dsd\xi,$$
where $\alpha_s$ is the extended action of $\mathbb{R}^n$ on $C^{\infty}(\mathbb{T}^n_{\theta})\otimes\mathbb{C}^N$.

Two symbols $\sigma$, $\sigma'\in {\mathcal S}^m(C^{\infty}(\mathbb{T}^n_{\theta}))$  are considered equivalent if  $\sigma-\sigma'\in {\mathcal S}^m(C^{\infty}(\mathbb{T}^n_{\theta}))$ for all  $m$. The equivalence of the symbols will be denoted by  $\sigma \sim \sigma'$.
We denote the collection of pseudodifferential operators by $\Psi^*(C^{\infty}(\mathbb{T}^n_{\theta}))$. The order gives a natural filtration on $\Psi^*(C^{\infty}(\mathbb{T}^n_{\theta}))$ and the following proposition \cite{Connes-Tred2009} gives an explicit formula for the symbol of the  product  of  pseudodifferential operators as operators on $\mathcal{H}$  modulo the above equivalence relation.
\begin{prop}
Let $P$ and $Q$ be pseudodifferential operators with the symbols $\sigma$ and $\sigma'$ respectively. The product $PQ$ is a pseudodifferential operator with the following symbol,
$$\sigma(PQ)\sim\sum_{\alpha}\frac{1}{\alpha!}\partial_{\xi}^{\alpha}(\sigma(\xi))\delta^{\alpha}(\sigma'(\xi)).$$
\end{prop}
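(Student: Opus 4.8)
The plan is to compute the symbol of the composition $PQ=A_\sigma A_{\sigma'}$ directly from the oscillatory integral that defines a pseudodifferential operator on $C^\infty(\mathbb T^n_\theta)$, bring it to a closed form, and then extract the stated asymptotic series by Taylor expanding in the covariable, controlling the tail by the symbol bounds.

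\emph{Reducing the composition to a single oscillatory integral.} I would first write $A_{\sigma'}(a)=\int\!\int e^{-it\cdot\eta}\sigma'(\eta)\alpha_t(a)\,dt\,d\eta$ and apply $A_\sigma$, using that $\alpha_s$ is an algebra automorphism — so it also acts on the symbol-valued factor $\sigma'(\eta)$ — together with $\alpha_s\alpha_t=\alpha_{s+t}$, to get
$$A_\sigma A_{\sigma'}(a)=\int\!\int\!\int\!\int e^{-is\cdot\xi-it\cdot\eta}\,\sigma(\xi)\,\alpha_s(\sigma'(\eta))\,\alpha_{s+t}(a)\,dt\,d\eta\,ds\,d\xi .$$
The changes of variable $t\mapsto t-s$ and then $\xi\mapsto\xi+\eta$ put this into the form $A_\rho(a)$ with
$$\rho(\xi)=\int\!\int e^{-is\cdot\zeta}\,\sigma(\zeta+\xi)\,\alpha_s(\sigma'(\xi))\,ds\,d\zeta .$$
These integrals are not absolutely convergent and are to be read as oscillatory integrals; inserting a cutoff $\chi(\varepsilon\zeta)$ with $\chi\in\mathcal S(\mathbb R^n)$, $\chi(0)=1$, and letting $\varepsilon\to0$ makes everything below rigorous, using that $\|\alpha_s(x)\|=\|x\|$ and that the $\delta$-derivatives of $\sigma'$ are symbol-bounded.

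\emph{Identifying the terms of the expansion.} Next I would Taylor expand $\sigma(\zeta+\xi)=\sum_{|\alpha|<M}\frac1{\alpha!}(\partial^\alpha_\xi\sigma)(\xi)\,\zeta^\alpha+R_M(\xi,\zeta)$ and substitute into $\rho$. For the generic monomial term, write $\zeta^\alpha e^{-is\cdot\zeta}=i^{|\alpha|}\partial_s^\alpha e^{-is\cdot\zeta}$, integrate by parts in $s$, and use the identity $\partial_{s_\mu}\alpha_s(x)=i\,\alpha_s(\delta_\mu x)$, which follows at once from $\alpha_s(U_k)=e^{is\cdot k}U_k$ and $\delta_\mu(U_k)=k_\mu U_k$. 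The powers of $i$ cancel ($i^{|\alpha|}(-1)^{|\alpha|}i^{|\alpha|}=1$), and the remaining integral $\int\!\int e^{-is\cdot\zeta}\alpha_s(\delta^\alpha\sigma'(\xi))\,ds\,d\zeta$ collapses by Fourier inversion to $\alpha_0(\delta^\alpha\sigma'(\xi))=\delta^\alpha\sigma'(\xi)$. Hence the $\alpha$-term equals exactly $\frac1{\alpha!}(\partial^\alpha_\xi\sigma)(\xi)\,\delta^\alpha\sigma'(\xi)$, which is the series in the statement.

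\emph{Estimating the remainder.} Finally I would show that the piece of $\rho$ coming from $R_M$ is a symbol of order $m+m'-M$, so that letting $M\to\infty$ produces the asymptotic equivalence $\sim$. Using the integral form of the remainder, $R_M(\xi,\zeta)$ is a sum over $|\alpha|=M$ of $\zeta^\alpha$ times $\int_0^1(1-u)^{M-1}(\partial^\alpha_\xi\sigma)(\xi+u\zeta)\,du$, which obeys symbol estimates of order $m-M$ uniformly; integrating the associated oscillatory integral by parts in $\zeta$ against powers of $s$ (via $\langle s\rangle^{-2N}(1-\Delta_\zeta)^N e^{-is\cdot\zeta}=e^{-is\cdot\zeta}$) and then carrying out the $\zeta$- and $s$-integrations, using these bounds together with $\|\alpha_s(\sigma'(\xi))\|\le C(1+|\xi|)^{m'}$, yields a symbol of order $m+m'-M$. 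I expect this last step to be the main obstacle: turning the formally divergent oscillatory integrals into genuine symbol estimates on the remainder, while dragging the $\varepsilon$-regularization along throughout, is exactly where the full strength of the bounds $\|\delta^\alpha\partial_\xi^\beta\sigma\|\le C_{\alpha\beta}(1+|\xi|)^{m-|\beta|}$ is needed.
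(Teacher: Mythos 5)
The paper does not actually prove this proposition: it is stated with a \verb|\qed| directly after the statement and attributed to \cite{Connes-Tred2009} (and ultimately to \cite{Connes1980}), so there is no in-paper proof to compare against. I therefore assess your argument on its own merits.

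Your outline is a correct adaptation to $C^\infty(\mathbb{T}^n_\theta)$ of the standard oscillatory-integral derivation of the composition law. The reduction of $A_\sigma A_{\sigma'}$ to $A_\rho$ with $\rho(\xi)=\iint e^{-is\cdot\zeta}\,\sigma(\zeta+\xi)\,\alpha_s(\sigma'(\xi))\,ds\,d\zeta$ is right (using that $\alpha_s$ is an algebra automorphism and $\alpha_s\alpha_t=\alpha_{s+t}$, then shifting $t$ and $\xi$), and the key identity $\partial_{s_\mu}\alpha_s(x)=i\,\alpha_s(\delta_\mu x)$ is exactly what replaces the $x$-derivatives of the commutative Kohn--Nirenberg formula by the derivations $\delta^\alpha$: together with $\zeta^\alpha e^{-is\cdot\zeta}=i^{|\alpha|}\partial_s^\alpha e^{-is\cdot\zeta}$, integration by parts in $s$, and Fourier inversion, the $\alpha$-th Taylor coefficient collapses to $\tfrac{1}{\alpha!}\partial_\xi^\alpha\sigma(\xi)\,\delta^\alpha\sigma'(\xi)$, with the $i$-powers cancelling as you note. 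All of this is consistent with the paper's (unnormalized) conventions for $A_\sigma$ and $\delta_\mu$.

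The genuinely incomplete part is the one you flag: the remainder estimate. As written, pulling out $\langle s\rangle^{-2N}(1-\Delta_\zeta)^N$ and integrating by parts in $\zeta$ only buys decay in $s$; it does not by itself control the $|\zeta|^M$ growth of $R_M(\xi,\zeta)$ in $\zeta$, nor does it directly yield the needed $(1+|\xi|)^{m+m'-M}$ bound. The standard repair is to split the $\zeta$-integral, e.g.\ into $|\zeta|\le|\xi|/2$, where $(1+|\xi+u\zeta|)\asymp(1+|\xi|)$ and the integral form of $R_M$ gives $O\bigl(|\zeta|^M(1+|\xi|)^{m-M}\bigr)$ so that the $s$-decay absorbs the $|\zeta|^M$, and $|\zeta|>|\xi|/2$, where further $\zeta$-integrations by parts plus the growth of $|\xi+u\zeta|$ for $u$ bounded away from $0$ give convergence and the right order; and one must then redo the estimate after applying arbitrary $\delta^\gamma\partial_\xi^\beta$ so as to land in $\mathcal{S}^{m+m'-M}$ rather than merely obtaining a pointwise bound. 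None of this is a wrong turn, but it is where the substance of the proof lies; what you have exhibited in full is the formal algebra, and the analytic half is only a pointer.
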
\qed

\begin{defi}

A symbol $\sigma\in\mathcal{S}^m(C^{\infty}(\mathbb{T}^n_{\theta}))$ is called elliptic if $\sigma(\xi)$ is invertible for $\xi\neq0,$ and for some $c$
$$||\sigma(\xi)^{-1}||\leq c(1+|\xi|)^{-m},$$ for large enough $|\xi|$.
\end{defi}

\begin{exm}
\normalfont
Consider the Dirac operator of the spectral triple $\left(\T^3,\mathcal{H}_{\tau}\otimes\mathbb{C}^2,D=\slashed{\partial}\right)$,
$$\slashed{\partial}=\partial_{\mu}\otimes\gamma^{\mu}:\mathcal{H}_{\tau}\oplus\mathcal{H}_{\tau}\longrightarrow\mathcal{H}_{\tau}\oplus\mathcal{H}_{\tau}.$$
The symbol reads
$$\sigma(D)(\xi)=\xi_{\mu}\otimes\gamma^{\mu}=
\begin{bmatrix}
\xi_3&\xi_1-i\xi_2\\
\xi_1+i\xi_2&-\xi_3
\end{bmatrix},$$
which is clearly elliptic.
\end{exm}

A  smooth map $\sigma:\mathbb{R}^n\to C^{\infty}(\mathbb{T}^n_{\theta})\otimes M_N(\mathbb{C})$ is called a classical symbol of order $\alpha \in \mathbb{C}$  
if for any $L$ and each $0\leq j\leq L$ there exist $\sigma_{\alpha-j}:\mathbb{R}^n\backslash\{0\}\to C^{\infty}(\mathbb{T}^n_{\theta})\otimes M_N(\mathbb{C})$ positive homogeneous of degree $\alpha-j$,  and a symbol $\sigma^L\in\mathcal{S}^{\Re(\alpha)-L-1}(C^{\infty}(\mathbb{T}^n_{\theta}))$, such that
 \begin{equation}
 \label{sigma}
\sigma (\xi)=\sum_{j=0}^{L}\chi(\xi)\sigma_{\alpha-j}(\xi)+\sigma^L(\xi)\quad\xi\in\mathbb{R}^n.
 \end{equation}
 Here  $\chi$ is a smooth cut off function on $\mathbb{R}^n$ which is equal to zero on a small ball around the origin,  and is equal to one outside the unit ball.
  The homogeneous terms in the expansion are uniquely determined by $\sigma$. 
 The set of classical symbols of order $\alpha$ on noncommutative n-torus will be denoted by $\mathcal{S}^{\alpha}_{cl}(C^{\infty}(\mathbb{T}^n_{\theta}))$.

The analogue of the Wodzicki residue for classical pseudodifferential operators on the noncommutative $n$-torus is defined in \cite{Farzad-Wong2011}.
 \begin{defi}
 The noncommutative residue of a classical pseudodifferential operator $A_{\sigma}$ is defined as
 $$\Wres(A_{\sigma})=\tau \left(\res (A_\sigma)\right),$$
where $\res(A_\sigma):=\int_{|\xi|=1} \tr\, \sigma_{-n}(\xi)d\xi$. 
 \end{defi}
It is evident from the definition that noncommutative residue vanishes on differential operators,  operators of order $<-n$ as well as non-integer order operators

 \subsection{The canonical trace} 
In what follows, we recall  the analogue of Kontsevich-Vishik canonical trace \cite{Kontsevich-Vishik1995}  on non-integer order pseudodifferential operators on the noncommutative tori from \cite{det-work}. For an alternative approach based on toroidal noncommutative symbols see \cite{Paycha-Levy2014}. For a thorough review of the theory in the classical case we refer to \cite{Paycha2012,Paycha-Scott2007}.

The existence of the so called cut-off integral for classical noncommutative symbols is established in \cite{det-work}.
 
\begin{prop}\label{cut-off}
 Let $\sigma\in\mathcal{S}_{cl}^{\alpha}(C^{\infty}(\mathbb{T}^n_{\theta}))$ and $B(R)$ be the ball of radius $R$ around the origin. One has the following asymptotic expansion
 $$\int_{B(R)}\sigma(\xi)d\xi\sim_{R\rightarrow\infty}\sum_{j=0,\alpha-j+n\neq0}^{\infty}\alpha_j(\sigma)R^{\alpha-j+n}+\beta(\sigma)\log R+c(\sigma),$$
 where $\beta(\sigma)=\int_{|\xi|=1}\sigma_{-n}(\xi)d\xi$
 and the constant term in the expansion, $c(\sigma)$, is given by 
\begin{equation}
\int_{\mathbb{R}^n}\sigma^L+\sum_{j=0}^{L}\int_{B(1)}\chi(\xi)\sigma_{\alpha-j}(\xi)d\xi-\sum_{j=0,\alpha-j+n\neq0}^L \frac{1}{\alpha-j+n}\int_{|\xi|=1}\sigma_{\alpha-j}(\omega)d\omega.
\end{equation} 
 Here we have used the notation of (\ref{sigma}).
  \end{prop}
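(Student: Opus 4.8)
\emph{Proof strategy.} The plan is to carry over the classical construction of the cut-off integral (see e.g. \cite{Paycha2012}) to the noncommutative setting. The only genuinely new feature is that every integral appearing below is $C^{\infty}(\mathbb{T}^n_{\theta})\otimes M_N(\mathbb{C})$-valued; the uniform symbol estimates built into the definition of $\mathcal{S}^{\alpha}_{cl}(C^{\infty}(\mathbb{T}^n_{\theta}))$ are precisely what is needed to legitimize Fubini, polar coordinates, and dominated convergence in the Fr\'echet topology. First I would fix an integer $L$ with $\Re(\alpha)-L-1<-n$ and split, using the decomposition \eqref{sigma}, $\int_{B(R)}\sigma(\xi)d\xi=\sum_{j=0}^{L}\int_{B(R)}\chi(\xi)\sigma_{\alpha-j}(\xi)d\xi+\int_{B(R)}\sigma^{L}(\xi)d\xi$, and then treat the remainder term and the homogeneous terms separately.

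For the remainder, the estimate $\|\sigma^{L}(\xi)\|\le C(1+|\xi|)^{\Re(\alpha)-L-1}$ together with the analogous bounds on $\delta^{\gamma}\sigma^{L}$ shows that $\sigma^{L}$ is integrable over all of $\mathbb{R}^n$, so I would write $\int_{B(R)}\sigma^{L}(\xi)d\xi=\int_{\mathbb{R}^n}\sigma^{L}(\xi)d\xi-\int_{\mathbb{R}^n\setminus B(R)}\sigma^{L}(\xi)d\xi$, the last integral being $O(R^{\Re(\alpha)-L-1+n})$ and hence negligible as $R\to\infty$. Thus $\sigma^{L}$ contributes only the term $\int_{\mathbb{R}^n}\sigma^{L}$ to the constant. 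For each homogeneous piece I would split at radius one, use $\chi\equiv 1$ outside the unit ball to write $\int_{B(R)}\chi\sigma_{\alpha-j}=\int_{B(1)}\chi\sigma_{\alpha-j}+\int_{B(R)\setminus B(1)}\sigma_{\alpha-j}$, pass to polar coordinates, and use positive homogeneity of degree $\alpha-j$ to factor out the radial integral: $\int_{B(R)\setminus B(1)}\sigma_{\alpha-j}(\xi)d\xi=\left(\int_{1}^{R}r^{\alpha-j+n-1}\,dr\right)\int_{|\omega|=1}\sigma_{\alpha-j}(\omega)d\omega$. If $\alpha-j+n\ne 0$, the radial integral equals $(R^{\alpha-j+n}-1)/(\alpha-j+n)$, producing the monomial with coefficient $\alpha_j(\sigma)=\frac{1}{\alpha-j+n}\int_{|\omega|=1}\sigma_{\alpha-j}(\omega)d\omega$ together with the constant contribution $-\frac{1}{\alpha-j+n}\int_{|\omega|=1}\sigma_{\alpha-j}(\omega)d\omega$; if $\alpha-j+n=0$, which forces $\alpha-j=-n$, the radial integral is $\log R$, giving the $\beta(\sigma)\log R$ term with $\beta(\sigma)=\int_{|\omega|=1}\sigma_{-n}(\omega)d\omega$. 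Assembling the three sources of constant terms — $\int_{\mathbb{R}^n}\sigma^{L}$, the pieces $\int_{B(1)}\chi\sigma_{\alpha-j}$, and the quantities $-\frac{1}{\alpha-j+n}\int_{|\omega|=1}\sigma_{\alpha-j}$ — reproduces the stated formula for $c(\sigma)$, while grouping the monomials by order gives the asymptotic expansion.

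The last thing to settle, and the only place where more than bookkeeping is involved, is that the constant term $c(\sigma)$ does not depend on the choice of $L$ (any larger $L$ must give the same answer). I would verify this by comparing the decompositions \eqref{sigma} for $L$ and for $L'>L$: one has $\sigma^{L}=\sum_{j=L+1}^{L'}\chi\sigma_{\alpha-j}+\sigma^{L'}$, and since $\Re(\alpha-j+n)<0$ for $L<j\le L'$, splitting each $\int_{\mathbb{R}^n}\chi\sigma_{\alpha-j}$ at radius one and evaluating the exterior part by the same polar-coordinate computation shows that the change in $\int_{\mathbb{R}^n}\sigma^{L}$ exactly cancels the change in the two finite sums. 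The main obstacle, then, is not conceptual but technical: ensuring that the Fr\'echet-valued integrals and all the interchanges of limits and integrals are justified, which is guaranteed by the symbol estimates; the rest follows the classical template verbatim.
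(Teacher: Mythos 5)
Your proof is correct and follows the standard classical template for the cut-off integral that the paper implicitly relies on (it states the result without proof, citing \cite{det-work} and \cite{Paycha2012}): split off the integrable remainder $\sigma^L$, integrate each homogeneous piece over $B(R)\setminus B(1)$ in polar coordinates using homogeneity to factor out the radial integral $\int_1^R r^{\alpha-j+n-1}\,dr$, and then check that the combined constant term is independent of $L$ by a telescoping comparison. The one genuinely noncommutative point — that the Fr\'echet-space-valued integrals, polar-coordinate change of variables, and limit interchanges are legitimate — you address correctly by invoking the uniform symbol estimates built into the definition of $\mathcal{S}^{\alpha}_{cl}(C^{\infty}(\mathbb{T}^n_{\theta}))$.
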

 
\begin{defi} 
 The cut-off integral of a symbol $\sigma\in\mathcal{S}_{cl}^{\alpha}(C^{\infty}(\mathbb{T}^n_{\theta}))$ is defined to be the constant term in the above asymptotic expansion, and we denote it by $\Int \sigma(\xi)d\xi$.
\end{defi}

    Now  the canonical trace of a classical pseudodifferential operator of non-integer order on $C^{\infty}(\mathbb{T}^n_{\theta})$ is defined as follows \cite{det-work}:

\begin{defi}
 The canonical trace of a classical pseudodifferential operator $A $ of non-integral order $\alpha$ is defined as 
 \begin{equation*}
 \TR(A) =\tau \left(\Int \tr \, \sigma_A(\xi)d\xi\right).
 \end{equation*}
 \end{defi}

 The relation between the TR-functional and the usual trace on trace-class pseudodifferential operators is established in \cite{det-work}.
Note that any pseudodifferential operator $A$  of order less that  $-n$ is a trace class operator  and its trace is given by
 $$\Tr(A)=\tau \left(\int_{\mathbb{R}^n}\tr\sigma_{P}(\xi)d\xi\right).$$
 The symbol  of such operators is integrable and we have 
\begin{equation}\label{TRTr}
\Int\sigma_A(\xi)=\int_{\mathbb{R}^n}\sigma_A(\xi)d\xi.
\end{equation}
 Therefore, the $\TR$-functional and operator trace coincide on classical pseudodifferential operators of order less than $-n$. 
 
The $\TR$-functional is in fact the analytic continuation of the operator trace and using this fact  we can prove that it is actually a trace.

 \begin{defi}
A family of symbols $\sigma(z)\in\mathcal{S}_{cl}^{\alpha(z)}(C^{\infty}(\mathbb{T}^n_{\theta}))$, parametrized by $z\in W\subset \mathbb{C}$, is called a holomorphic family if
\begin{itemize}
\item[i)]  The map $z\mapsto \alpha(z)$ is holomorphic. 
\item[ii)]  The map $z\mapsto \sigma(z)\in \mathcal{S}_{cl}^{\alpha(z)}(\A)$ is a holomorphic map from $W$ to the Fr\'{e}chet space $\mathcal{S}_{cl}(C^{\infty}(\mathbb{T}^n_{\theta})).$  
\item[iii)] The map $z\mapsto \sigma(z)_{\alpha(z)-j}$ is   holomorphic for any $j$, where 
 \begin{equation}\label{sigma2}
 \sigma(z)(\xi)\sim\sum_j\chi(\xi)\sigma(z)_{\alpha(z)-j}(\xi)\in\mathcal{S}_{cl}^{\alpha(z)}(C^{\infty}(\mathbb{T}^n_{\theta})).
 \end{equation}
\item[iv)] The bounds of the asymptotic expansion of  $\sigma(z)$ are locally uniform with respect to $z$, i.e, for any $L\geq1$ and compact subset $K\subset W$, there exists a constant $C_{L,K,\alpha,\beta}$ such that for all multi-indices  $\alpha,\beta$ we have
$$\left|\left|\delta^{\alpha}\partial^{\beta}\left(\sigma(z)-\sum_{j<L}\chi\sigma(z)_{\alpha(z)-j}\right)(\xi)\right|\right|<C_{L,K,\alpha,\beta}|\xi|^{\Re(\alpha(z))-L-|\beta|}.$$
\end{itemize}

   A family $\left\{A_z\right\}\in\Psi_{cl}(C^{\infty}(\mathbb{T}^n_{\theta}))$ is called holomorphic if $A_z=A_{\sigma(z)}$ for a holomorphic family of symbols $\left\{\sigma(z)\right\}$. 
  \end{defi}
  
 Complex powers of elliptic operators are an important class of holomorphic families. Let $Q\in\Psi^q_{cl}(C^{\infty}(\mathbb{T}^n_{\theta}))$ be a positive elliptic pseudodifferential operator of order $q>0$. 
 The complex power of such an operator, $Q_\phi^z$, for $\Re(z)<0$  can be defined by the following Cauchy integral formula.
\begin{equation}\label{Qz}
Q_\phi^z=\frac{i}{2\pi}\int_{C_\phi}\lambda_\phi^z (Q-\lambda)^{-1} d\lambda.
\end{equation}
Here $\lambda^z_\phi$ is the complex power with branch cut $L_\phi =\{re^{i\phi}, r\geq 0\}$ and $C_\phi$ is a contour around the spectrum of $Q$ such that
$$C_\phi\cap {\rm spec}(Q)\backslash\{0\}=\emptyset,\qquad L_\phi\cap C_\phi=\emptyset,$$
$$ C_\phi\cap\{ {\rm spec}(\sigma(Q)^L(\xi)),\,\xi\neq 0\}=\emptyset.$$
\begin{rmk}
\normalfont
More generally, an operator for which one can find a ray $L_\phi$ with the above property, is called an admissible operator with the spectral cut $L_\phi$ and its complex power can be defined as above. Self-adjoint elliptic operators are admissible (see \cite{Paycha2012, det-work}).
\end{rmk}

 The following Proposition is the analogue of the result of Kontsevich and Vishik \cite{Kontsevich-Vishik1995}, for pseudodifferential calculus on noncommutative tori.
 \begin{prop}\label{thmLaurent}
 Given a holomorphic family $\sigma(z)\in\mathcal{S}_{cl}^{\alpha(z)}(C^{\infty}(\mathbb{T}^n_{\theta}))$, $z\in W \subset\mathbb{C}$, the map 
 $$z\mapsto \Int\sigma(z)(\xi)d\xi,$$
 is meromorphic with at most simple poles located in 
 $$P=\left\{z_0\in W;~\alpha(z_0)\in\mathbb{Z}\cap[-n,+\infty]\right\}.$$
 The residues at poles are given by
 $$\Res_{z=z_0} \Int\sigma(z)(\xi)d\xi=-\frac{1}{\alpha'(z_0)}\int_{|\xi|=1}\sigma(z_0)_{-n}d\xi.$$
 \end{prop}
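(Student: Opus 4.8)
The plan is to reduce the statement to a local computation on the asymptotic expansion of the cut-off integral, using Proposition \ref{cut-off} as the starting point. Fix a holomorphic family $\sigma(z)\in\mathcal{S}_{cl}^{\alpha(z)}(C^{\infty}(\mathbb{T}^n_{\theta}))$ and, for each $z$, decompose it as in \eqref{sigma} with homogeneous components $\sigma(z)_{\alpha(z)-j}$ and remainder $\sigma(z)^L$. By the formula for the constant term $c(\sigma(z))$ in Proposition \ref{cut-off}, the cut-off integral $\Int\sigma(z)(\xi)d\xi$ equals
\begin{equation*}
\int_{\mathbb{R}^n}\sigma(z)^L(\xi)d\xi+\sum_{j=0}^{L}\int_{B(1)}\chi(\xi)\sigma(z)_{\alpha(z)-j}(\xi)d\xi-\sum_{j=0,\ \alpha(z)-j+n\neq 0}^{L}\frac{1}{\alpha(z)-j+n}\int_{|\xi|=1}\sigma(z)_{\alpha(z)-j}(\omega)d\omega.
\end{equation*}
First I would argue that the first two terms are holomorphic in $z$: the integral of $\sigma(z)^L$ over $\mathbb{R}^n$ is absolutely convergent once $L$ is large enough (since $\Re(\alpha(z))-L-1<-n$ on a given compact set $K$), and by the locally uniform bounds in condition (iv) of the definition of a holomorphic family one may differentiate under the integral sign; similarly the compactly supported integrals over $B(1)$ depend holomorphically on $z$ by condition (iii). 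So all poles of $z\mapsto\Int\sigma(z)(\xi)d\xi$ come from the third, finite sum.

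Next I would analyze that finite sum term by term. The $j$-th summand is $\frac{1}{\alpha(z)-j+n}\,\psi_j(z)$, where $\psi_j(z):=\int_{|\xi|=1}\sigma(z)_{\alpha(z)-j}(\omega)d\omega$ is holomorphic in $z$ by condition (iii). A pole at $z=z_0$ can occur only if the denominator vanishes, i.e. $\alpha(z_0)-j+n=0$, which forces $\alpha(z_0)=j-n\in\mathbb{Z}\cap[-n,+\infty)$ — exactly the pole set $P$ in the statement. Since $z\mapsto\alpha(z)$ is holomorphic and nonconstant near such a point (a degenerate constant case can be handled separately or excluded), $\alpha(z)-j+n$ has a simple zero at $z_0$ with leading coefficient $\alpha'(z_0)$; hence each summand, and therefore the whole sum, has at worst a simple pole there. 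Computing the residue: near $z_0$, $\alpha(z)-j+n=\alpha'(z_0)(z-z_0)+O((z-z_0)^2)$, so
\begin{equation*}
\Res_{z=z_0}\frac{\psi_j(z)}{\alpha(z)-j+n}=\frac{\psi_j(z_0)}{\alpha'(z_0)}=\frac{1}{\alpha'(z_0)}\int_{|\xi|=1}\sigma(z_0)_{-n}(\omega)d\omega,
\end{equation*}
using that $\alpha(z_0)-j=-n$, so $\sigma(z_0)_{\alpha(z_0)-j}=\sigma(z_0)_{-n}$. The overall minus sign in the statement comes from the minus sign in front of the finite sum in $c(\sigma(z))$. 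Applying $\tau\circ\tr$ (a continuous linear functional, hence commuting with taking residues) gives the claimed formula for $\Res_{z=z_0}\TR$; note that if several indices $j$ contribute at the same $z_0$ this cannot happen, since $\alpha(z_0)$ is a single number and $j=\alpha(z_0)+n$ is uniquely determined.

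The main obstacle I anticipate is the first step — showing that $\int_{\mathbb{R}^n}\sigma(z)^L(\xi)d\xi$ is genuinely holomorphic and that one may differentiate under the integral — because the decomposition \eqref{sigma} and the remainder $\sigma(z)^L$ depend on the choice of $L$, and one must check that the putative pole structure is independent of that choice. The resolution is the standard one: if $L'>L$, the difference $\sigma(z)^L-\sigma(z)^{L'}$ is a finite sum of cut-off homogeneous terms $\chi(\xi)\sigma(z)_{\alpha(z)-j}$ with $L<j\le L'$, and these contribute holomorphically to the $B(1)$-integrals and, via the corresponding extra terms $\frac{1}{\alpha(z)-j+n}\int_{|\xi|=1}\sigma(z)_{\alpha(z)-j}$, enlarge the finite sum without changing its residues at any fixed $z_0$ (the new terms are regular at $z_0$ whenever $\alpha(z_0)\neq j-n$, and when $\alpha(z_0)=j-n$ the residue is again $\frac{1}{\alpha'(z_0)}\int_{|\xi|=1}\sigma(z_0)_{-n}$, consistent with the formula). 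Hence the meromorphic continuation and its residues are well defined, completing the proof. One should also remark, as the excerpt's surrounding text does, that self-adjoint elliptic $Q$ and the families $|D|^{-z}$, $D|D|^{-z-1}$ are admissible and fit this framework, so that the proposition applies to the spectral functions of interest.
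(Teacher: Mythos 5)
Your proposal is correct and follows essentially the same route as the paper's own proof: both decompose $\Int\sigma(z)(\xi)d\xi$ via the constant-term formula of Proposition \ref{cut-off}, observe that only the finite sum of terms $\frac{1}{\alpha(z)+n-j}\int_{|\xi|=1}\sigma(z)_{\alpha(z)-j}$ can produce poles, and extract the residue from the first-order expansion $\alpha(z)-\alpha(z_0)=\alpha'(z_0)(z-z_0)+o(z-z_0)$. Your additional checks (holomorphy of the remainder and ball integrals via conditions (iii)--(iv), independence of the choice of $L$, uniqueness of the contributing index $j$) are sound refinements of details the paper leaves implicit.
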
 
 \begin{proof}
 By definition, one can write $\sigma(z)=\sum_{j=0}^L\chi(\xi)\sigma(z)_{\alpha(z)-j}(\xi)+\sigma(z)^L(\xi)$, and by Proposition (\ref{cut-off}) we have,
\begin{align*}
\Int\sigma(z)(\xi)d\xi
&=\int_{\mathbb{R}^n}\sigma(z)^L(\xi)d\xi
+\sum_{j=0}^{L}\int_{B(1)}\chi(\xi)\sigma(z)_{\alpha(z)-j}(\xi)\\
&-\sum_{j=0}^{L}\frac{1}{\alpha(z)+n-j}\int_{|\xi|=1}\sigma(z)_{\alpha(z)-j}(\xi)d\xi.
\end{align*} 
 Now suppose $\alpha(z_0)+n-j_0=0$. By holomorphicity of $\sigma(z)$, we have $\alpha(z)-\alpha(z_0)=\alpha'(z_0)(z-z_0)+o(z-z_0)$. Hence
 $$\Res_{z=z_0} \Int\sigma(z)=\frac{-1}{\alpha'(z_0)}\int_{|\xi|=1}\sigma(z_0)_{-n}(\xi)d\xi.$$
 \end{proof}
 \begin{cor}\label{analyticcontin}
 The functional $\TR$ is the analytic continuation of the ordinary trace on trace-class pseudodifferential operators.
 \end{cor}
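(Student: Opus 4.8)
The plan is to show that $\TR$, defined on the classical pseudodifferential operators of non-integer order, agrees with the restriction of the analytic continuation of the ordinary operator trace. Let $A\in\Psi_{cl}^\alpha(C^{\infty}(\mathbb{T}^n_{\theta}))$ be of non-integer order $\alpha$, and pick a positive elliptic $Q\in\Psi_{cl}^q(C^{\infty}(\mathbb{T}^n_{\theta}))$ with $q>0$; for concreteness one may take $Q=(1+\slashed{\partial}^2)^{1/2}$ or any convenient choice. First I would form the holomorphic family $A_z:=A\,Q_\phi^{-z}$, which by the product formula for symbols (Proposition on the symbol of a product) and the fact that complex powers form a holomorphic family, is itself a holomorphic family of classical pseudodifferential operators of order $\alpha(z)=\alpha-qz$. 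For $\Re(z)$ large, $A_z$ has order $<-n$, hence is trace class, and by the remark following \eqref{TRTr} its operator trace is $\Tr(A_z)=\tau\big(\int_{\mathbb{R}^n}\tr\,\sigma_{A_z}(\xi)\,d\xi\big)=\tau\big(\Int\tr\,\sigma_{A_z}(\xi)\,d\xi\big)=\TR(A_z)$, since the symbol is then integrable and \eqref{TRTr} applies.

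Next I would invoke Proposition \ref{thmLaurent}: the map $z\mapsto \TR(A_z)=\tau\big(\Int\tr\,\sigma_{A_z}(\xi)\,d\xi\big)$ extends to a meromorphic function on $\mathbb{C}$ whose possible poles occur only at those $z_0$ with $\alpha-qz_0\in\mathbb{Z}\cap[-n,\infty)$. The key point is now to examine the behaviour at $z=0$: since $\alpha$ is non-integer, $\alpha(0)=\alpha\notin\mathbb{Z}$, so $z=0$ is not in the pole set $P$, and therefore $z\mapsto\TR(A_z)$ is holomorphic at $z=0$. Evaluating at $z=0$ gives, on the one hand, the value of the analytic continuation of $z\mapsto\Tr(A Q_\phi^{-z})$ at $z=0$ (this is the standard definition of the regularized/analytically continued trace of $A$), and on the other hand, since $\sigma_{A_z}$ depends holomorphically on $z$ and specializes at $z=0$ to $\sigma_A$ (because $Q_\phi^0=\mathrm{Id}$), and since the cut-off integral $\sigma\mapsto\Int\tr\,\sigma(\xi)\,d\xi$ is continuous in the relevant Fréchet topology, the limit equals $\tau\big(\Int\tr\,\sigma_A(\xi)\,d\xi\big)=\TR(A)$. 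Hence $\TR(A)$ coincides with the value at $z=0$ of the analytic continuation of the operator trace, which is the assertion.

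The main obstacle I anticipate is justifying the interchange of limits, namely that $\lim_{z\to 0}\Int\tr\,\sigma_{A_z}(\xi)\,d\xi=\Int\tr\,\sigma_A(\xi)\,d\xi$. This requires knowing that the cut-off integral functional is continuous with respect to the holomorphic structure on the family $\{\sigma_{A_z}\}$ — concretely, that the constant term $c(\sigma)$ in the asymptotic expansion of Proposition \ref{cut-off} depends continuously (indeed holomorphically) on the family, using the uniform bounds in condition (iv) of the definition of a holomorphic family, and that the individual pieces $\int_{\mathbb{R}^n}\sigma(z)^L$, $\int_{B(1)}\chi\sigma(z)_{\alpha(z)-j}$, and $\int_{|\xi|=1}\sigma(z)_{\alpha(z)-j}$ vary continuously while the denominators $\alpha(z)-j+n$ stay away from zero near $z=0$ (which holds precisely because $\alpha\notin\mathbb{Z}$). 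A secondary, more bookkeeping-type point is checking that $A_z=AQ_\phi^{-z}$ is genuinely a holomorphic family in the sense of the definition given above; this follows from the product formula together with the known holomorphy of $z\mapsto Q_\phi^{-z}$, but it should be stated. One should also remark that the value $\TR(A)$ is independent of the auxiliary choice of $Q$ and of the spectral cut $\phi$, which is automatic once we have identified it with $\lim_{z\to0}\Tr(AQ_\phi^{-z})$ and observed that the difference of two such regularizations involves only the residue (a noncommutative Wodzicki residue term) which vanishes for $A$ of non-integer order.
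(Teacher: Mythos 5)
Your proof is correct and follows essentially the same route as the paper: both rest on Proposition \ref{thmLaurent} to deduce that the cut-off integral is holomorphic along any holomorphic family of non-integer order, and both invoke \eqref{TRTr} to identify $\TR$ with $\Tr$ in the trace-class regime. The paper states this more tersely at the level of symbols, asserting directly that $\sigma\mapsto\Int\sigma$ is the unique analytic continuation of $\sigma\mapsto\int_{\mathbb{R}^n}\sigma$; you have simply made the underlying holomorphic family $A_z=AQ_\phi^{-z}$ explicit and spelled out the continuity of the cut-off integral at $z=0$ and the independence of the auxiliary data, which are details the paper leaves implicit.
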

 \begin{proof}
 First observe that, by the above result, for a non-integer order holomorphic family of symbols $\sigma(z)$, the map $z\mapsto\Int\sigma(z)(\xi)d\xi$ is holomorphic.  Hence, the map $\sigma\mapsto\Int\sigma(\xi)d\xi$ is the unique analytic continuation of the map $\sigma\mapsto\int_{\mathbb{R}^n}\sigma(\xi)d\xi$ from $\mathcal{S}_{cl}^{<-n}(C^{\infty}(\mathbb{T}^n_{\theta}))$ to $\mathcal{S}^{\notin\mathbb{Z}}_{cl}(C^{\infty}(\mathbb{T}^n_{\theta}))$.  By \eqref{TRTr} we have the result.
\end{proof}

\begin{cor}
 Let $A\in\Psi ^{\alpha}_{cl}(C^{\infty}(\mathbb{T}^n_{\theta}))$ be of order $\alpha\in\mathbb{Z}$ and let $Q$ be  a positive elliptic classical pseudodifferential operator of positive order $q$. We have
 $$\Res_{z=0}\TR(AQ^{-z})= \frac{1}{q}\Wres(A).$$
 \end{cor}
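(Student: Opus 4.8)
The plan is to reduce the statement to the already-established Laurent expansion result, Proposition~\ref{thmLaurent}, applied to the holomorphic family $z \mapsto A Q^{-z}$. First I would verify that $\{AQ^{-z}\}$ is indeed a holomorphic family of classical pseudodifferential operators in the sense defined above: since $Q$ is positive elliptic of order $q>0$, its complex powers $Q^{-z}$ form a holomorphic family of order $-qz$ (by the Cauchy integral formula \eqref{Qz} and the standard symbol estimates, which hold in the noncommutative setting by \cite{det-work}), and composing with the fixed operator $A$ of order $\alpha$ gives a holomorphic family $\sigma(z)$ of order $\alpha(z) = \alpha - qz$, using the product formula of Proposition~2.1. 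In particular $\alpha'(z) = -q$ for all $z$.

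Next, I would locate the pole at $z=0$. By Proposition~\ref{thmLaurent}, the map $z \mapsto \Int \sigma(z)(\xi)\, d\xi$, and hence $z \mapsto \TR(AQ^{-z}) = \tau\big(\Int \tr\,\sigma(z)(\xi)\,d\xi\big)$, is meromorphic with at most simple poles at points $z_0$ where $\alpha(z_0) \in \mathbb{Z} \cap [-n, \infty)$. At $z_0 = 0$ we have $\alpha(0) = \alpha \in \mathbb{Z}$, so $z=0$ is a possible pole, and the residue formula gives
\begin{equation*}
\Res_{z=0} \TR(AQ^{-z}) = \tau\left( -\frac{1}{\alpha'(0)} \int_{|\xi|=1} \tr\, \sigma(0)_{-n}(\xi)\, d\xi \right) = \frac{1}{q}\, \tau\left( \int_{|\xi|=1} \tr\, \sigma(0)_{-n}(\xi)\, d\xi \right).
\end{equation*}

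The final step is to identify $\sigma(0)_{-n}$, the degree $-n$ homogeneous component of the symbol of $AQ^{-z}$ at $z=0$. Since $Q^{0} = I$ (as an operator; one should note that the symbol of $Q^z$ at $z=0$ is equivalent to the constant symbol $1$, which follows from the Cauchy integral representation), the symbol $\sigma(0)$ is equivalent to $\sigma_A$, so its degree $-n$ homogeneous piece is exactly $(\sigma_A)_{-n}$. Therefore the right-hand side becomes $\frac{1}{q}\,\tau\big(\int_{|\xi|=1} \tr\,(\sigma_A)_{-n}(\xi)\,d\xi\big) = \frac{1}{q}\,\tau(\res(A)) = \frac{1}{q}\Wres(A)$, which is the claim.

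The main obstacle I anticipate is the bookkeeping in the second step: one must be careful that $z=0$ really is where the relevant homogeneous component contributes, i.e. that among the terms $\sigma(z)_{\alpha(z)-j}$ it is precisely the term with $\alpha(z) - j + n = 0$ at $z=0$ that produces the residue, and that this term's value at $z=0$ equals $(\sigma_A)_{-n}$ rather than some lower-order piece of $A$ twisted by the subleading symbol of $Q^{-z}$ — but since the subleading symbols of $Q^{-z}$ all vanish to first order in $z$ at $z=0$ (because $Q^0$ is the identity), their contributions to the residue cancel against the simple zero, leaving only the leading term. Verifying this vanishing, and that it is compatible with the product expansion of Proposition~2.1, is the one place requiring genuine care; everything else is a direct application of the results already proved.
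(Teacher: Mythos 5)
Your proof is correct and takes essentially the same route as the paper: apply Proposition \ref{thmLaurent} to the holomorphic family $\sigma(AQ^{-z})$ of order $\alpha(z)=\alpha-qz$ (so $\alpha'(0)=-q$), identify $\sigma(0)_{-n}$ with $(\sigma_A)_{-n}$ via $Q^0=I$, and take $\tau\circ\tr$. The paper's version is simply terser, and the worry in your closing paragraph about subleading symbols of $Q^{-z}$ is already absorbed into the residue formula, which evaluates $\sigma(z)_{-n}$ directly at $z=0$.
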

 \begin{proof}
 For the holomorphic family $\sigma(z)=\sigma(AQ^{-z})$,  $z=0$ is a pole for the map  $z\mapsto\Int\sigma(z)(\xi)d\xi$ whose  residue is given by 
 $$\Res_{z=0}\left(z\mapsto \Int\sigma(z)(\xi)d\xi\right)=-\frac{1}{\alpha'(0)}\int_{|\xi|=1}\sigma_{-n}(0)d\xi=-\frac{1}{\alpha'(0)}\res(A).$$
Taking $\tau$-trace on both sides gives the result. 
 \end{proof}
 Now we can prove the trace property of $\TR$-functional.
 \begin{prop}
 We have  
 $\TR(AB)=\TR(BA)$ for any $A,B\in\Psi^*_{cl}(C^{\infty}(\mathbb{T}^n_{\theta}))$, provided that $ord(A)+ord(B)\notin\mathbb{Z}$.
 \end{prop}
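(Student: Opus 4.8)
The plan is to reduce the statement about the $\TR$-functional to the already-established fact (Corollary~\ref{analyticcontin}) that $\TR$ is the analytic continuation of the ordinary operator trace, together with the meromorphy statement of Proposition~\ref{thmLaurent}. The key observation is that the ordinary trace $\Tr$ genuinely satisfies $\Tr(AB)=\Tr(BA)$ whenever both $AB$ and $BA$ are trace class, and this will be true when $\ord(A)+\ord(B)<-n$. We want to bootstrap from this low-order regime up to the generic regime $\ord(A)+\ord(B)\notin\mathbb{Z}$ by a holomorphic deformation argument.

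Concretely, I would fix $A,B\in\Psi^{*}_{cl}(C^{\infty}(\mathbb{T}^n_{\theta}))$ with $\ord(A)+\ord(B)=m\notin\mathbb{Z}$, choose an auxiliary positive elliptic classical operator $Q$ of order $q>0$ (e.g. $Q=1+\slashed{\partial}^2$ suitably understood, or any admissible positive elliptic operator), and form the two holomorphic families $z\mapsto AQ^{-z}B$ and $z\mapsto BAQ^{-z}$. Both are holomorphic families of classical symbols in the sense of the Definition preceding Proposition~\ref{thmLaurent}, of orders $\ord(A)+\ord(B)-qz$ and the same $\ord(A)+\ord(B)-qz$ respectively. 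For $\Re(z)$ large both operators have order $<-n$, hence are trace class, and since $AQ^{-z}B$ and $B\cdot(AQ^{-z})$ differ by a cyclic permutation of trace-class-producing factors, $\Tr(AQ^{-z}B)=\Tr(BAQ^{-z})$ there. By Corollary~\ref{analyticcontin} and Proposition~\ref{thmLaurent}, the functions $z\mapsto\TR(AQ^{-z}B)$ and $z\mapsto\TR(BAQ^{-z})$ are meromorphic on $\mathbb{C}$ with poles only at points where the order lands in $\mathbb{Z}\cap[-n,\infty)$; they agree on a half-plane, hence agree as meromorphic functions. Finally I evaluate at $z=0$: since $\ord(A)+\ord(B)=m\notin\mathbb{Z}$, the point $z=0$ is not among the possible poles, so both functions are regular at $0$ and $\TR(AB)=\TR(AQ^{0}B)=\TR(BAQ^{0})=\TR(BA)$, using $Q^0=\mathrm{Id}$.

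A couple of technical points need care. First, one must check that $z\mapsto AQ^{-z}B$ really is a holomorphic family of classical \emph{noncommutative} symbols: this follows from the product formula of Proposition~\ref{thmLaurent}'s ambient calculus (the composition rule $\sigma(PQ)\sim\sum_\alpha\frac1{\alpha!}\partial_\xi^\alpha\sigma\,\delta^\alpha\sigma'$) together with the known holomorphic dependence of $\sigma(Q^{-z})$ on $z$ and the fact that composition with a fixed operator preserves holomorphic families — this is where one invokes that composition is continuous on the relevant Fréchet symbol spaces and depends holomorphically on parameters. Second, one should confirm that the order function $z\mapsto \ord(A)+\ord(B)-qz$ is a genuinely nonconstant affine (hence holomorphic) function of $z$, so that Proposition~\ref{thmLaurent} applies and the pole set is discrete; this is immediate since $q>0$.

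The main obstacle I anticipate is the first technical point: verifying that inserting the fixed operators $A$ and $B$ on either side of the holomorphic family $Q^{-z}$ yields again a \emph{holomorphic} family of classical symbols satisfying all four clauses of the definition (in particular the locally uniform symbol estimates in clause (iv) and the holomorphy of each homogeneous component in clause (iii)). This is essentially a bookkeeping exercise using the asymptotic composition formula and the fact that $A$, $B$ contribute $z$-independent factors, but it requires knowing that the symbol of $Q^{-z}$ is itself a holomorphic family with the requisite uniform bounds — a fact established in \cite{det-work} that I would cite rather than reprove. Once holomorphy of both families is granted, the identity-theorem argument is routine and the evaluation at $z=0$ is immediate from $\ord(A)+\ord(B)\notin\mathbb{Z}$.
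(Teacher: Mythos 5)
Your argument is correct and uses essentially the same idea as the paper's proof: holomorphically deform into the trace-class regime where cyclicity of $\Tr$ is elementary, invoke the identity theorem for meromorphic functions together with Proposition~\ref{thmLaurent}, and then evaluate at $z=0$, which is not a pole precisely because $\ord(A)+\ord(B)\notin\mathbb{Z}$. The only cosmetic difference is that you realize the deformation by inserting a single factor $Q^{-z}$ (and work with $\Re(z)\gg0$), while the paper posits two holomorphic families $A_z,B_z$ shifting the orders of $A$ and $B$ individually (and works with $\Re(z)\ll0$); these are interchangeable.
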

\begin{proof}
Consider the families $\{A_z\}$ and $\{B_z\}$ such that $A_0\sim A$, $B_0\sim B$ , $\ord(A_z)=\ord(A)+z$ and $\ord(B_z)=\ord(B)+z$.\
 For $z\in W=-(\ord(A)+\ord(B))+\mathbb{Z}$ the families $\{A_zB_z\}$ and $\{B_zA_z\}$ have non-integer order . For $\Re(z)\ll 0$, the two familes are trace-class and $\Tr(A_zB_z)=\Tr(B_zA_z)$. now by analytic continuation we have
 $TR(A_zB_z)=TR(B_zA_z)$, for $z\in\mathbb{C}-W$. Putting $z=0$ gives $TR(AB)=TR(BA)$.
 \end{proof}
 
 \begin{rmk}
 \normalfont
 The above result provides another proof for the trace property of the non-commutative residue on $\Psi^{\mathbb{Z}}_{cl}(C^{\infty}(\mathbb{T}^2_{\theta}))$ given in \cite{Farzad-Wong2011}, namely, for $A,B\in\Psi^{\mathbb{Z}}_{cl}(C^{\infty}(\mathbb{T}^2_{\theta}))$, $$\Wres([A,B])=0.$$
On can write, $$\Wres([A,B])=\Res_{z=0}\TR([A,B]Q^{-z})=\Res_{z=0}\TR(C_z)+\Res_{z=0}\TR([AQ^{-z},B]),$$
 where $C_z=ABQ^{-z}-AQ^{-z}B$. For $Re(z)\gg0$, the operator $AQ^{-z}$ is trace-class and  $\Tr([AQ^{-z},B])=0$, so by analytic continuation,$\TR([AQ^{-z},B])=0$ and
 therefore, $\Res_{z=0}\TR([A,B]Q^{-z})=\Res_{z=0}\TR(C_z)$. Finally, $C_0=ABQ^0-AQ^0B\in\Psi_{cl}^{-\infty}(C^{\infty}(\mathbb{T}^2_{\theta}))$, so
 $$\Wres([A,B])=\Res_{z=0}\TR(C_z)=\Wres(C_0)=0,$$
 where in the last equality we used the fact that the noncommutative residue of a smoothing operator is zero.
 \end{rmk}
 \subsection{Log-polyhomogeneous symbols}  
In general, $z$-derivatives of a classical holomorphic family of symbols are not classical anymore and therefore we introduce log-polyhomogeneous symbols which include the $z$-derivatives of the symbols of the holomorphic family $\sigma(AQ^{-z})$. 

\begin{defi}
A symbol $\sigma$ is called a log-polyhomogeneous symbol if it has the following form
\begin{equation}\label{logpolysym}
\sigma(\xi) \sim \sum_{j\geq 0}\sum_{l=0}^\infty\sigma_{\alpha-j,l}(\xi)\log^l|\xi|\quad |\xi|>0,
\end{equation}
with $ \sigma_{\alpha-j,l}$ positively homogeneous in $\xi$ of degree $\alpha - j$.
\end{defi}

 An important example  of an operator with such a symbol is $\log Q$ where $Q\in\Psi^q_{cl}(C^{\infty}(\mathbb{T}^n_{\theta}))$ is a positive elliptic pseudodifferential operator of order $q>0$.
 The logarithm of $Q$ can be  defined by
 $$\log Q
 =Q\left.\frac{d}{dz}\right|_{z=0}Q^{z-1}
 =Q\left.\frac{d}{dz}\right|_{z=0}\frac{i}{2\pi}\int_C\lambda^{z-1}(Q-\lambda)^{-1}d\lambda.$$ 
 It is a pseudodifferential operator with symbol 
\begin{equation}\label{symboflog}\sigma(\log Q)
\sim \sigma(Q)\star \sigma\Big(\left.\frac{d}{dz}\right|_{z=0} Q^{z-1}\Big),
\end{equation}
where $\star$ denotes the product of symbols.  
One can show that \eqref{symboflog} is a log-polyhomogeneous symbol of the form 
 $$\sigma(\log Q)(\xi)=q\log|\xi| I+ \sigma_{cl}(\log Q)(\xi),$$ 
 where  $\sigma_{cl}(\log Q)$ is a classical symbol of order zero (see \cite{Paycha2012}).

By adapting the proof of Theorem 1.13 in \cite{Paycha-Scott2007} to the  noncommutative case, we have the following theorem for the family $\sigma(AQ^{-z})$. 
 
 \begin{prop}\label{Laurent exp}
Let $A\in\Psi_{cl}^\alpha(C^{\infty}(\mathbb{T}^n_{\theta}))$ and $Q$ be a positive (or more generally, an admissible) elliptic pseudodifferential operator of positive order $q$. If $\alpha\in P$ then $z=0$ is a possible simple pole for the function $z\mapsto \TR(AQ^{-z})$ with the following Laurent expansion around zero,
\begin{align*}
\TR(AQ^{-z})&=\frac{1}{q}\Wres(A)\frac{1}{z}\\
 &+\tau\left(\Int\sigma(A)- \frac{1}{q}\res(A\log Q)\right)-\Tr(A\Pi_Q)\\
& +\sum_{k=1}^K(-1)^k\frac{(z)^k}{k!} \\
&\times \left(\tau\left( \Int\sigma(A(\log Q)^k)d\xi-\frac{1}{q(k+1)}\res(A(\log Q)^{k+1})\right)-\Tr(A\log^k Q\Pi_Q)\right)\\
&+o(z^{K}).
\end{align*}
Where $\Pi_Q$ is the projection on the kernel of $Q$.
\end{prop}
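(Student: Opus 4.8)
The plan is to reduce the statement to an application of Proposition \ref{thmLaurent} to the holomorphic family $\sigma(z) = \sigma(AQ^{-z})$, followed by a careful Taylor expansion in $z$ of the meromorphic function $z \mapsto \TR(AQ^{-z})$ around $z=0$. First I would observe that $AQ^{-z}$ is a holomorphic family of classical pseudodifferential operators of order $\alpha(z) = \alpha - qz$ (at least for $\Re z$ large, with analytic continuation elsewhere), so that $\alpha'(0) = -q$; this already shows, via Proposition \ref{thmLaurent}, that $z=0$ is at worst a simple pole, and the residue equals $-\tfrac{1}{\alpha'(0)}\tau\!\left(\int_{|\xi|=1}\tr\,\sigma(AQ^{-z})_{-n}|_{z=0}\,d\xi\right) = \tfrac1q\tau(\res A) = \tfrac1q\Wres(A)$, which is the leading coefficient claimed. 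The subtlety is the presence of a kernel for $Q$: one works with $Q + \Pi_Q$ (invertible) and the complex power $(Q+\Pi_Q)^{-z}$, writes $Q^{-z} = (Q+\Pi_Q)^{-z} + \Pi_Q - \Pi_Q(Q+\Pi_Q)^{-z}$ on the orthogonal complement conventions, so that $AQ^{-z} = A(Q+\Pi_Q)^{-z} + A\Pi_Q(1 - (Q+\Pi_Q)^{-z})$; since $\Pi_Q$ is smoothing, the second term contributes a \emph{trace-class} operator-valued holomorphic family whose trace is $-\Tr(A\Pi_Q) + O(z)$, and more precisely $\Tr\big(A\Pi_Q(1-(Q+\Pi_Q)^{-z})\big) = \Tr\big(A\Pi_Q(1 - e^{-z\log(Q+\Pi_Q)})\big)$, whose Taylor coefficients are exactly the $-\Tr(A\log^k Q\,\Pi_Q)$ terms (with the convention $\log Q := \log(Q+\Pi_Q)$ acting trivially on the kernel).

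Next I would handle the main term $\TR\big(A(Q+\Pi_Q)^{-z}\big)$. Differentiating the Cauchy integral formula \eqref{Qz} for $(Q+\Pi_Q)^{-z}$ in $z$ gives $\tfrac{d^k}{dz^k}(Q+\Pi_Q)^{-z} = (-1)^k (Q+\Pi_Q)^{-z}(\log(Q+\Pi_Q))^k$, hence the symbol-level identity $\partial_z^k\,\sigma\big(A(Q+\Pi_Q)^{-z}\big)\big|_{z=0} \sim (-1)^k\,\sigma\big(A(\log Q)^k\big)$, a log-polyhomogeneous symbol of the type discussed after \eqref{logpolysym}. The point is then to extract, from the cut-off-integral representation in Proposition \ref{cut-off} applied to this holomorphic family, both the finite part at $z=0$ and the higher Taylor coefficients. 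Writing $f(z) := \tau\big(\Int \tr\,\sigma(A(Q+\Pi_Q)^{-z})(\xi)\,d\xi\big)$, Proposition \ref{thmLaurent} tells us $f$ has a simple pole at $0$ with the residue above; the regularized value $\mathrm{f.p.}_{z=0} f$ and the coefficients of $z^k$ in its Laurent tail are what we must identify. Here I expect the decisive computation: one uses that near a pole coming from the term $-\sum_j \tfrac{1}{\alpha(z)+n-j}\int_{|\xi|=1}\sigma(z)_{\alpha(z)-j}\,d\xi$ in Proposition \ref{cut-off}, expanding $\tfrac{1}{\alpha(z)+n-j_0} = \tfrac{1}{-q}\cdot\tfrac1z(1 + O(z))$ and expanding $\sigma(z)_{\alpha(z)-j_0}(\xi)$ in $z$ produces exactly a $-\tfrac1q\res(A\log Q)$ correction at order $z^0$ and $-\tfrac{1}{q(k+1)}\res(A(\log Q)^{k+1})$ at order $z^k$ — the combinatorial factor $\tfrac{1}{k+1}$ arising from $\int_0^1 (\text{homogeneity shift})$, equivalently from the expansion $\tfrac{1}{z} \cdot \tfrac{z^{k+1}}{(k+1)!}\partial_z^{k+1}(\cdots)$. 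This is precisely the content of Theorem 1.13 of \cite{Paycha-Scott2007}, so my task is to transcribe that argument, replacing the manifold integral $\int_M$ by the trace $\tau$ and ordinary symbols by Connes' symbols, checking that every step — the homogeneity bookkeeping, the holomorphy of $z\mapsto\sigma(z)_{\alpha(z)-j}$, the uniform bounds (iv) in the definition of holomorphic family — goes through verbatim.

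Assembling, $\TR(AQ^{-z}) = f(z) - \Tr\big(A\Pi_Q(1 - e^{-z\log(Q+\Pi_Q)})\big)$, and collecting the $z^{-1}$, $z^0$, and $z^k$ coefficients from the two pieces yields the stated Laurent expansion, with the $\frac1q\Wres(A)/z$ head term, the finite part $\tau(\Int\sigma(A) - \tfrac1q\res(A\log Q)) - \Tr(A\Pi_Q)$, and the general coefficient $(-1)^k\big(\tau(\Int\sigma(A(\log Q)^k) - \tfrac{1}{q(k+1)}\res(A(\log Q)^{k+1})) - \Tr(A\log^k Q\,\Pi_Q)\big)$; truncating at order $K$ gives the $o(z^K)$ remainder. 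The main obstacle, as indicated, is not conceptual but bookkeeping: correctly tracking the interaction between the simple pole of $f$ and the Taylor expansion of the coefficient $\tfrac{1}{\alpha(z)+n-j}$ so that the residue term $\res(A\log Q)$ and all the $\res(A(\log Q)^{k+1})$ terms emerge with the right rational coefficients $\tfrac{1}{q(k+1)}$; a secondary point requiring care is making sure the kernel projection $\Pi_Q$ is inserted consistently in all of $Q^{-z}$, $\log Q$, and $\log^k Q$, so that the "operator trace" pieces $\Tr(A\log^k Q\,\Pi_Q)$ are well defined (finite rank times bounded) and match the expansion of the smoothing correction.
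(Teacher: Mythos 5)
Your plan is essentially the paper's own proof: the paper gives no further argument beyond saying the result follows by adapting Theorem 1.13 of \cite{Paycha-Scott2007} to the noncommutative setting via the canonical-trace machinery (Propositions \ref{cut-off} and \ref{thmLaurent}), which is precisely the transcription you outline, including the identification of the residue $\frac1q\Wres(A)$ and the emergence of the $\frac{1}{q(k+1)}\res(A(\log Q)^{k+1})$ corrections from the interplay of the factors $\frac{1}{\alpha(z)+n-j}$ with the $z$-expansion of $\sigma(z)_{\alpha(z)-j}$. One small slip to fix: since $(Q+\Pi_Q)^{-z}\Pi_Q=\Pi_Q$, your decomposition $Q^{-z}=(Q+\Pi_Q)^{-z}+\Pi_Q-\Pi_Q(Q+\Pi_Q)^{-z}$ collapses to $(Q+\Pi_Q)^{-z}$ and the correction term $A\Pi_Q\bigl(1-(Q+\Pi_Q)^{-z}\bigr)$ vanishes identically; the standard convention is $Q^{-z}:=(Q+\Pi_Q)^{-z}-\Pi_Q$, which directly produces the $-\Tr(A\Pi_Q)$ term (and, with $\log Q:=\log(Q+\Pi_Q)$, makes the terms $\Tr(A\log^kQ\,\Pi_Q)$, $k\ge1$, vanish, being kept in the statement only for uniformity).
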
 \qed

\begin{rmk}\normalfont
The term $\res(A\log Q)$ appearing in above Laurent expansion is an extension of Wodzicki residue density to operators with Log-polyhomogeneous symbols \cite{Lesch1999}.
For an operator  $P$ with log-polyhomogeneous symbol, by $\res(P)$ we mean,
 $$\res(P)=\int_{|\xi|=1}\sigma_{-n,0}(\xi)d\xi,$$
  (see \eqref{logpolysym}).
\end{rmk}

 \section{The spectral eta function}
 In this section we study the eta function associated with the family of spectral triples $(\T^3,\mathcal{H},e^{th}De^{th})$ where $h\in\T^3$ is a self-adjoint element \cite{Connes-Moscovici-twisted}, and also the coupled spectral triple $(\T^3,\mathcal{H},D+A)$. By exploiting the developed pseudodifferential calculus, the regularity of the eta function at zero in above cases will be proved. 
 \subsection{Regularity at zero}
  The spectral eta function was first introduced in \cite{APS} where its value at zero appeared as a correction term in the Atiyah-Patodi-Singer index theorem for manifolds with boundary. It is defined as 
 $$\eta_D(z)=\sum_{\lambda\in\spec(D),\lambda\neq0}\sgn(\lambda)|\lambda|^{-z}=\TR\left(D|D|^{-z-1}\right),$$
 where $D$ is a self-adjoint elliptic pseudodifferential operator.
 Unlike the spectral zeta functions for positive elliptic operators, proving the regularity of eta function at zero is difficult. This was proved in \cite{APSIII} and \cite{Gilkey-eta} using K-theoretic arguments and in \cite{Bismut-Freed}, Bismut and Freed gave an analytic proof of the regularity at zero of the eta function for a twisted Dirac operator on an odd dimensional spin manifold.
 
\begin{rmk}
\normalfont
Note that for an even spectral triple $(\mathcal{A},H,D)$ we have $D\Gamma=-\Gamma D$, therefore the spectrum of the Dirac operator 
is symmetric and $\eta_D(z)$ is identically zero. Also the same vanishing happens if the spectral triple admits a real structure with KO-dimensions $1$ or $5$ (see definition \ref{real-table}).
\end{rmk} 
 The meromorphic structure of eta function for Dirac operator can be studied by the pole structure of the canonical trace for holomorphic families.
  \begin{prop}\label{eta-res}
 Let $D$ be an elliptic self-adjoint first-order differential operator on $\T^3$. The poles of the eta function $\eta_D(z)$ are located among
  $\{3-i,~i\in\mathbb{N}\}$, and 
$$\Res_{z=0}\eta_D(z)=\Wres(D|D|^{-1}).$$
 \end{prop}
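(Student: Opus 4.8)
The plan is to apply Proposition~\ref{Laurent exp} directly to the holomorphic family $A Q^{-z}$ with the specific choices $A = D$ and $Q = |D|$, viewing $\eta_D(z) = \TR(D|D|^{-z-1}) = \TR(D|D|^{-1}|D|^{-z})$. Here $D|D|^{-1}$ is a classical pseudodifferential operator of order $0$ (the sign operator, well-defined modulo the finite-rank projection $\Pi_D$ onto $\ker D$, which does not affect the symbol calculus), and $|D|$ is a positive elliptic operator of order $q=1$. First I would note that, since $\ord(D|D|^{-1}|D|^{-z}) = -z$, the only candidate pole occurs where $-z \in \mathbb{Z}\cap[-3,\infty)$, i.e.\ at $z \in \{-2,-1,0,1,2,3,\dots\}$; this already locates all poles among $\{3-i : i \in \mathbb{N}\}$ once one uses the standard fact (via Proposition~\ref{thmLaurent}) that the nonnegative integers in that list contribute no pole because the relevant homogeneous symbol component of the integrand vanishes there, leaving exactly the set claimed (one should double-check the precise indexing convention $\{3-i\}$ against where the homogeneous component $\sigma_{-3}$ can be nonzero).

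For the residue at $z=0$: Proposition~\ref{Laurent exp} with $\alpha = 0 \in P$ gives that the coefficient of $1/z$ in the Laurent expansion of $\TR((D|D|^{-1})|D|^{-z})$ equals $\frac{1}{q}\Wres(D|D|^{-1})$, and since $q = \ord(|D|) = 1$, this is precisely $\Wres(D|D|^{-1})$. So the second assertion is essentially immediate from the cited Laurent expansion, modulo checking that $D|D|^{-1}$ genuinely has a classical symbol of integer order $0$ so that the hypothesis $\alpha \in \mathbb{Z}$ of Proposition~\ref{Laurent exp} applies; this in turn requires knowing that $|D|^{-1}$ — equivalently $(D^2)^{-1/2}$, corrected by $\Pi_D$ — is a classical elliptic $\Psi$DO of order $-1$, which follows from the complex-power construction in~\eqref{Qz} applied to the positive elliptic operator $D^2 + \Pi_D$ (or directly to $|D|$ as an admissible self-adjoint operator, per the Remark following~\eqref{Qz}).

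\textbf{Expected main obstacle.} The genuinely delicate point is the treatment of the kernel of $D$ and the passage from $|D|^{-z}$ to $(D^2)^{-z/2}$: the operator $|D|^{-z-1}$ is defined by functional calculus only off $\ker D$, whereas Proposition~\ref{Laurent exp} is stated for $Q$ positive elliptic (or admissible) and the symbolic residue computation is insensitive to $\Pi_D$. One must argue that replacing $D$ by $D + \Pi_D$ or $|D|$ by $|D| + \Pi_D$ changes $\eta_D(z)$ only by an entire function (indeed, $\Tr(\Pi_D \cdot \text{smoothing})$ contributes nothing to the pole), so that the residue at $z=0$ is unaffected. Beyond this bookkeeping, everything reduces to the already-established Propositions~\ref{thmLaurent} and~\ref{Laurent exp}; no new symbol computation is needed, only the verification that $D|D|^{-1}$ lies in $\Psi^0_{cl}(C^\infty(\mathbb{T}^3_\theta))$ and that its order-$z$ perturbation $D|D|^{-1}|D|^{-z}$ forms a holomorphic family in the sense of the relevant definition.
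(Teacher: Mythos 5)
Your proposal is correct and is essentially the paper's own argument: the paper likewise treats $\eta_D(z)=\TR\bigl((D|D|^{-1})\,|D|^{-z}\bigr)$ as a holomorphic family, reads off the pole locations from Proposition~\ref{thmLaurent} and the residue $\frac{1}{q}\Wres(D|D|^{-1})=\Wres(D|D|^{-1})$ (with $q=\ord(|D|)=1$) from Proposition~\ref{Laurent exp}, and your extra care about $\ker D$ and the classicality of $D|D|^{-1}$ is bookkeeping the paper omits. One small slip: since the order of the family is $-z$, the candidate pole set from Proposition~\ref{thmLaurent} is $\{z\in\mathbb{Z}:\,-z\ge -3\}=\{z\in\mathbb{Z}:\, z\le 3\}=\{3-i,\ i\in\mathbb{N}\}$ outright, so nothing needs to be discarded; your written set $\{-2,-1,0,1,2,3,\dots\}$ and the remark about removing nonnegative integers have the inequality reversed, though this does not affect the residue identity at $z=0$.
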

 \begin{proof}
 By  using the result of Proposition \ref{thmLaurent},
the family $\left\{\sigma\left(D|D|^{-z-1}\right)\right\}$ has poles within the set $\left\{z;-z\in\mathbb{Z}\cap[-3,\infty]\right\}$ or  $\left\{z=3-i,i\in\mathbb{N}\right\}$.
Also, by Proposition \ref{Laurent exp} we have
\begin{equation*}
\eta_D(z)=\TR\left(D|D|^{-z-1}\right)=\Wres(D|D|^{-1})\frac{1}{z}+a_0+a_1z+\cdots,
\end{equation*}
Hence,
$$\Res_{z=0}\eta_D(z)=\Wres(D|D|^{-1}).$$
 \end{proof}
 We now prove the regularity at $z=0$ of eta function for the 1-parameter family $\{e^{th}De^{th}\}$ for the spectral triple $(\T^3,H_{\tau}\otimes\mathbb{C}^2,D=\partial_{\mu}\otimes\gamma^{\mu})$ on noncommutative 3-torus.

 \begin{prop}\label{eta-reg}
 Consider the family of operators $\{e^{th}De^{th}\}$ on $\mathcal{H}_{\tau}\otimes\mathbb{C}^2$ where $h=h^*\in\T^3$, then $$\Res_{z=0} \eta_{e^{th}De^{th}}(z)=0.$$  
  \end{prop}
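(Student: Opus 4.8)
The plan is to use Proposition \ref{eta-res} (applied to the self-adjoint first-order differential operator $D_t := e^{th}De^{th}$) which reduces the claim to showing that the Wodzicki residue $\Wres(D_t|D_t|^{-1})$ vanishes, i.e. that $\res(D_t|D_t|^{-1}) = \int_{|\xi|=1}\tr\,\sigma_{-3}(\xi)\,d\xi$ has vanishing $\tau$-trace, or better, vanishes outright as an element of $\T^3$. Here $\sigma_{-3}$ denotes the degree $-3$ homogeneous component of the symbol of $D_t|D_t|^{-1}$ on the $3$-torus, so the integrand is a $2\times 2$ matrix over $\T^3$ homogeneous of degree $-3$ in $\xi\in\mathbb{R}^3$, integrated over the unit sphere $S^2$.

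The first step is to compute enough of the symbol expansion of $D_t|D_t|^{-1}$. Write $D_t = e^{th}D e^{th}$; since $e^{th}$ is an order-zero (bounded) operator with the algebra, $D_t$ is a first-order differential operator whose leading symbol is $\sigma_1(D_t)(\xi) = e^{th}\,(\xi_\mu\otimes\gamma^\mu)\,e^{th}$ up to lower-order corrections coming from commuting $e^{th}$ past $\delta_\mu$, and whose subleading symbol $\sigma_0(D_t)$ is an explicit expression in $h$, $e^{th}$ and the $\gamma^\mu$. Then $|D_t| = (D_t^2)^{1/2}$, and one forms $|D_t|^{-1}$ via the functional calculus / Cauchy integral (equation \eqref{Qz}), extracting its homogeneous symbol components $\sigma_{-1}, \sigma_{-2}, \sigma_{-3}$ by the composition formula of Proposition 3.1. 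Multiplying by $\sigma(D_t)$ and again using Proposition 3.1, the degree $-3$ part $\sigma_{-3}(D_t|D_t|^{-1})$ is a finite sum of products of these homogeneous pieces and their $\delta$- and $\partial_\xi$-derivatives.

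The decisive step is a parity argument combined with the trace over $\mathbb{C}^2$. The leading symbol of $D_t|D_t|^{-1}$ is an odd function of $\xi$; tracking homogeneity, each lower-order correction shifts the $\xi$-degree by an integer while the number of explicit $\gamma$-matrices drops by the corresponding parity, so $\sigma_{-3}$ is an \emph{even} function of $\xi$ whose pointwise matrix trace $\tr\,\sigma_{-3}(\xi)$ can be organized into terms each of which is either odd in $\xi$ (and so integrates to zero over $S^2$) or is a trace of an odd number of $\gamma$-matrices (hence zero by $\tr\,\gamma^\mu = \tr\,\gamma^\mu\gamma^\nu\gamma^\rho\cdot(\text{sign}) = 0$ in dimension $3$, using $\tr I = 2$, $\tr\gamma^\mu = 0$, $\tr\gamma^\mu\gamma^\nu\gamma^\rho = 2i\,\epsilon^{\mu\nu\rho}$ but this last one paired against an even scalar function of $\xi$ that is odd after the $\epsilon$-contraction). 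More cleanly: one checks that $\res(D_t|D_t|^{-1})$ already vanishes at $t=0$ (the flat Dirac operator — here $D|D|^{-1}$ has symbol exactly $\slashed{\xi}/|\xi|$, homogeneous of degree $0$, with no degree $-3$ term at all, so the residue is trivially zero), and then that the residue is independent of $t$; the latter follows because $\frac{d}{dt}\Wres(D_t|D_t|^{-1})$ can be written, using the variational formula $\dot D_t = h D_t + D_t h$ together with the trace property of $\Wres$ and the fact that $\Wres$ vanishes on commutators and on operators whose order is not an integer in the right range, as a sum of Wodzicki residues of operators of the form $\Wres([\,\cdot\,,\,\cdot\,])$ or of differential operators, all of which vanish.

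The main obstacle I anticipate is the bookkeeping in the variational computation: expressing $\partial_t(D_t|D_t|^{-1})$ requires differentiating $|D_t|^{-1} = (D_t^2)^{-1/2}$ through the Cauchy integral, which produces terms like $-\tfrac12 |D_t|^{-1}(\dot D_t D_t + D_t \dot D_t)|D_t|^{-2} + \cdots$ that are not themselves commutators, and one must manipulate these under $\Wres$ — using cyclicity of $\Wres$ on the appropriate non-integer-order deformations, the identity $\Wres(P)=0$ for differential $P$, and $\res(P\log Q)$ manipulations from Proposition \ref{Laurent exp} — to reduce everything to manifestly vanishing pieces. An alternative, possibly cleaner route is to prove directly that $\res(D_t|D_t|^{-1}) = 0$ in $\T^3$ by the parity/$\gamma$-trace argument sketched above without ever invoking $t$-independence; I would attempt that first and fall back on the variational argument only if the explicit degree $-3$ symbol proves too unwieldy.
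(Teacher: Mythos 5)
Your primary route is essentially the paper's own proof: reduce via Proposition \ref{eta-res} to showing $\Wres(D_t|D_t|^{-1})=0$, compute the homogeneous terms of $\sigma(|D_t|^{\pm1})$ recursively and collect the degree $-3$ part of $\sigma(D_t|D_t|^{-1})$, then note every term carries one or three $\gamma$-matrices and kill the matrix trace using $\tr\gamma^{\mu}=0$ and $\tr(\gamma^{\mu}\gamma^{\nu}\gamma^{\rho})=i\epsilon^{\mu\nu\rho}\tr I$ together with the vanishing of the resulting $\epsilon$-contraction. Only a small caution: your blanket phrase ``a trace of an odd number of $\gamma$-matrices, hence zero'' is false for three $\gamma$'s in dimension $3$, so the $\epsilon$-contraction remark you append is the actual argument (and it is exactly the step the paper itself invokes), while your deferred variational fallback is not needed here.
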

  \begin{proof}
  By definition, $\eta_{D_t}(z)=\TR(D_t|D_t|^{-z-1})$. Using Proposition \ref{eta-res} we have
  $$\Res_{z=0} \eta_{D_t}(z)=\Wres(D_t|D_t|^{-1}),$$
  where the right hand side is the Wodzicki residue on noncommutative $3$-torus. 
   
 Now for each element of the family $D_t=e^{\frac{th}{2}}De^{\frac{th}{2}}$, $D_t^2=e^{\frac{th}{2}}De^{th}De^{\frac{th}{2}}$ and $|D_t|=\sqrt{D_t^2}$. By using the product formula for the symbols we have,
 \begin{equation}
 \sigma(D_t)\sim\xi_{\mu}e^{th}\otimes\gamma^{\mu}+e^{\frac{th}{2}}\delta_{\mu}(e^{\frac{th}{2}})\otimes\gamma^{\mu},
 \end{equation}
and
\begin{align*}
\sigma(D_t^2)&=\sigma(e^{\frac{th}{2}}De^{th}De^{\frac{th}{2}})\sim\xi_{\lambda}\xi_{\mu}e^{2th}\otimes\gamma^{\lambda}\gamma^{\mu}\\
&+\xi_{\lambda}e^{\frac{3th}{2}}\delta_{\mu}(e^{\frac{th}{2}})\gamma^{\lambda}\gamma^{\mu}+\xi_{\mu}e^{\frac{th}{2}}\delta_{\lambda}(e^{\frac{3th}{2}})\otimes\gamma^{\lambda}\gamma^{\mu}\\
&+e^{\frac{th}{2}}\delta_{\lambda}(e^{th})\delta_{\mu}(e^{\frac{th}{2}})\otimes\gamma^{\lambda}\gamma^{\mu}+e^{\frac{3th}{2}}\delta_{\lambda}\delta_{\mu}(e^{\frac{th}{2}})\otimes\gamma^{\lambda}\gamma^{\mu}.\\
\end{align*}
To compute the homogeneous terms in the symbol of $|D_t|$, we observe that $|D_t|=\sqrt{D_t^2}$ and hence,
\begin{equation}
\sigma(|D_t|)=\sqrt{\sigma(D_t^2)}\sim\sigma_1(\xi)+\sigma_0(\xi)+\sigma_{-1}(\xi)+\cdots.
\end{equation}
We compute the first three terms, which we need for computing the symbol of $|D_t|^{-1}$.
\begin{align*}
\sigma_1(\xi)&=\lim_{k\to\infty}\frac{\sigma(|D_t|)(k\xi)}{k}=\sqrt{\left(\xi^2\right)}e^{th}\otimes I,\\
\sigma_0(\xi)&=\lim_{k\to\infty}\sigma(|D_t|)(k\xi)-k\sigma_1(\xi)\\
&=\left(\xi_{\lambda}e^{\frac{3th}{2}}\delta_{\mu}(e^{\frac{th}{2}})\right)(\frac{1}{2\sqrt{\xi^2}}e^{-th})\otimes\gamma^{\lambda}\gamma^{\mu}+\left(\xi_{\mu}e^{\frac{th}{2}}\delta_{\lambda}(e^{\frac{3th}{2}})\right)(\frac{1}{2\sqrt{\xi^2}}e^{-th})\otimes\gamma^{\lambda}\gamma^{\mu},\\
\sigma_{-1}(\xi)&=\lim_{k\to\infty}\frac{\left(\sigma(|D_t|)(k\xi)-\sigma_1(\xi)-\sigma_0(\xi)\right)}{k^{-1}}\\
&=\frac{1}{2\sqrt{\xi^2}}\left(e^{\frac{th}{2}}\delta_{\lambda}(e^{th})\delta_{\mu}(e^{\frac{th}{2}})\right)e^{-th}\otimes\gamma^{\lambda}\gamma^{\mu}+\frac{1}{2\sqrt{\xi^2}}\left(e^{\frac{3th}{2}}\delta_{\lambda}\delta_{\mu}(e^{\frac{th}{2}})\right)e^{-th}\otimes\gamma^{\lambda}\gamma^{\mu}\\
&-\frac{1}{8\xi^2\sqrt{\xi^2}}\left(\xi_{\lambda}e^{\frac{3th}{2}}\delta_{\mu}(e^{\frac{th}{2}})\right)e^{-th}\left(\xi_{\nu}e^{\frac{3th}{2}}\delta_{\rho}(e^{\frac{th}{2}})\right)e^{-2th}\otimes\gamma^{\lambda}\gamma^{\mu}\gamma^{\nu}\gamma^{\rho}\\
&-\frac{1}{8\xi^2\sqrt{\xi^2}}\left(\xi_{\lambda}e^{\frac{3th}{2}}\delta_{\mu}(e^{\frac{th}{2}})\right)e^{-th}\left(\xi_{\rho}e^{\frac{th}{2}}\delta_{\nu}(e^{\frac{3th}{2}})\right)e^{-2th}\otimes\gamma^{\lambda}\gamma^{\mu}\gamma^{\nu}\gamma^{\rho}\\
&-\frac{1}{8\xi^2\sqrt{\xi^2}}\left(\xi_{\mu}e^{\frac{th}{2}}\delta_{\lambda}(e^{\frac{3th}{2}})\right)e^{-th}\left(\xi_{\nu}e^{\frac{3th}{2}}\delta_{\rho}(e^{\frac{th}{2}})\right)e^{-2th}\otimes\gamma^{\lambda}\gamma^{\mu}\gamma^{\nu}\gamma^{\rho}\\
&-\frac{1}{8\xi^2\sqrt{\xi^2}}\left(\xi_{\mu}e^{\frac{th}{2}}\delta_{\lambda}(e^{\frac{3th}{2}})\right)e^{-th}\left(\xi_{\rho}e^{\frac{th}{2}}\delta_{\nu}(e^{\frac{3th}{2}})\right)e^{-2th}\otimes\gamma^{\lambda}\gamma^{\mu}\gamma^{\nu}\gamma^{\rho},\\
\end{align*}
where we have used the notation $\xi^2=\sum_{k=0}^3\xi_k^2$.

Now by using the relation $|D_t|^{-1}|D_t|=1$ and by recursive computation we find the homogeneous terms in the symbol of the inverse,
\begin{equation}
\sigma(|D_t|^{-1})\sim\sigma_{-1}(|D_t|^{-1})(\xi)+\sigma_{-2}(|D_t|^{-1})(\xi)+\cdots,
\end{equation}
where,
$$\sigma_{-1}(|D_t|^{-1})(\xi)=\frac{1}{\sqrt{\xi^2}}e^{-th}\otimes I,$$
\begin{align*}
\sigma_{-2}(|D_t|^{-1})(\xi)&=\\
&-\sigma_{-1}(|D_t|^{-1})\{ \sigma_0(|D_t|)\sigma_{-1}(|D_t|^{-1})\\
&+\sum_{\alpha_1+\alpha_2+\alpha_3=1}\partial_{\xi_1}^{\alpha_1}\partial_{\xi_2}^{\alpha_2}\partial_{\xi_3}^{\alpha_3}\sigma_{1}(|D_t|)\delta^{\alpha_1}\delta^{\alpha_2}\delta^{\alpha_3}\sigma_{-1}(|D_t|^{-1})\},\\
\sigma_{-3}\{|D_t|^{-1})(\xi)&=\\
&-\sigma_{-1}(|D_t|^{-1})\{ (\sigma_{-1}(|D_t|)\sigma_{-1}(|D_t|^{-1})+\sigma_0(|D_t|)\sigma_{-2}(|D_t|^{-1})\\
&+\sum_{\alpha_1+\alpha_2+\alpha_3=2}\frac{1}{\alpha_1!\alpha_2!\alpha_3!}\partial_{\xi_1}^{\alpha_1}\partial_{\xi_2}^{\alpha_2}\partial_{\xi_3}^{\alpha_3}\sigma_{1}(|D_t|)\delta^{\alpha_1}\delta^{\alpha_2}\delta^{\alpha_3}\sigma_{-1}(|D_t|^{-1})\\
&+\sum_{\alpha_1+\alpha_2+\alpha_3=1}\partial_{\xi_1}^{\alpha_1}\partial_{\xi_2}^{\alpha_2}\partial_{\xi_3}^{\alpha_3}\sigma_{1}(|D_t|)\delta^{\alpha_1}\delta^{\alpha_2}\delta^{\alpha_3}\sigma_{-2}(|D_t|^{-1})\},
\end{align*}
\begin{align*}
\sigma_{-4}(|D_t|^{-1})(\xi)&=\\
-\sigma_{-1}(|D_t|^{-1})&\{ \sigma_{-2}(|D_t|)\sigma_{-1}(|D_t|^{-1})
+\sigma_{-1}(|D_t|)\sigma_{-2}(|D_t|^{-1})
+\sigma_0(|D_t|)\sigma_{-3}(|D_t|^{-1})\\
&+\sum_{\alpha_1+\alpha_2+\alpha_3=1}\frac{1}{\alpha_1!\alpha_2!\alpha_3!}\partial_{\xi_1}^{\alpha_1}\partial_{\xi_2}^{\alpha_2}\partial_{\xi_3}^{\alpha_3}\sigma_0(|D_t|)\delta^{\alpha_1}\delta^{\alpha_2}\delta^{\alpha_3}\sigma_{-2}(|D_t|^{-1})\\
&+\sum_{\alpha_1+\alpha_2+\alpha_3=1}\frac{1}{\alpha_1!\alpha_2!\alpha_3!}\partial_{\xi_1}^{\alpha_1}\partial_{\xi_2}^{\alpha_2}\partial_{\xi_3}^{\alpha_3}\sigma_{-1}(|D_t|)\delta^{\alpha_1}\delta^{\alpha_2}\delta^{\alpha_3}\sigma_{-1}(|D_t|^{-1})\\
&+\sum_{\alpha_1+\alpha_2+\alpha_3=1}\frac{1}{\alpha_1!\alpha_2!\alpha_3!}\partial_{\xi_1}^{\alpha_1}\partial_{\xi_2}^{\alpha_2}\partial_{\xi_3}^{\alpha_3}\sigma_1(|D_t|)\delta^{\alpha_1}\delta^{\alpha_2}\delta^{\alpha_3}\sigma_{-3}(|D_t|^{-1})\\
&+\sum_{\alpha_1+\alpha_2+\alpha_3=2}\frac{1}{\alpha_1!\alpha_2!\alpha_3!}\partial_{\xi_1}^{\alpha_1}\partial_{\xi_2}^{\alpha_2}\partial_{\xi_3}^{\alpha_3}\sigma_0(|D_t|)\delta^{\alpha_1}\delta^{\alpha_2}\delta^{\alpha_3}\sigma_{-1}(|D_t|^{-1})\\
&+\sum_{\alpha_1+\alpha_2+\alpha_3=2}\frac{1}{\alpha_1!\alpha_2!\alpha_3!}\partial_{\xi_1}^{\alpha_1}\partial_{\xi_2}^{\alpha_2}\partial_{\xi_3}^{\alpha_3}\sigma_1(|D_t|)\delta^{\alpha_1}\delta^{\alpha_2}\delta^{\alpha_3}\sigma_{-2}(|D_t|^{-1})\\
&+\sum_{\alpha_1+\alpha_2+\alpha_3=3}\frac{1}{\alpha_1!\alpha_2!\alpha_3!}\partial_{\xi_1}^{\alpha_1}\partial_{\xi_2}^{\alpha_2}\partial_{\xi_3}^{\alpha_3}\sigma_1(|D_t|)\delta^{\alpha_1}\delta^{\alpha_2}\delta^{\alpha_3}\sigma_{-1}(|D_t|^{-1})\}.
\end{align*}
 Therefore, the symbol of $\sigma(D_t|D_t|^{-1})$ reads
   \begin{align*}  
       &\sigma(D_t|D_t|^{-1})\sim\left(\sigma_1(D_t)+\sigma_0(D_t)\right)\star\left(\sigma_{-1}(|D_t|^{-1})+\sigma_{-2}(|D_t|^{-1})+\sigma_{-3}(|D_t|^{-1})+\cdots\right)\\
  &\sim\left(\sigma_{1}(D_t)\star\sigma_{-1}(|D_t|^{-1})\right)+\left(\sigma_{1}(D_t)\star\sigma_{-2}(|D_t|^{-1})\right)\\
  &+\left(\sigma_{1}(D_t)\star\sigma_{-3}(|D_t|^{-1})\right)+\left(\sigma_1(D_t)\star\sigma_{-4}(|D_t|^{-1})\right)+\cdots\\
  &+\left(\sigma_{0}(D_t)\star\sigma_{-1}(|D_t|^{-1})\right)+\left(\sigma_{0}(D_t)\star\sigma_{-2}(|D_t|^{-1})\right)+\left(\sigma_0(D_t)\star\sigma_{-3}(|D_t|^{-1})\right)+\cdots.\\ 
   \end{align*}
   For $a=1,0$ and $b=-1,-2,-3.\cdots$, one has
   $$\sigma_a(D_t)\star\sigma_b(|D_t|^{-1})=\sum_{\alpha}\frac{1}{\alpha!}\partial_{\xi}^{\alpha}\sigma_a(D_t)\delta^{\alpha}\sigma_b(|D_t|^{-1}),$$
   and each term is of order $a-|\alpha|+b$. By collecting the terms of order $-3$ we obtain,
     \begin{align*}
   \sigma_{-3}(D_t|D_t|^{-1})&\sim\left(\sigma_1(D_t)\sigma_{-4}(|D_t|^{-1})\right)+\left(\sigma_0(D_t)\sigma_{-3}(|D_t|^{-1})\right)\\
   &+\left(\sum_{\alpha_1+\alpha_2+\alpha_3=1}\partial_{\xi_1}^{\alpha_1}\partial_{\xi_2}^{\alpha_2}\partial_{\xi_3}^{\alpha_3}\sigma_{1}(D_t)\delta_1^{\alpha_1}\delta_2^{\alpha_2}\delta_3^{\alpha_3}\sigma_{-3}(|D_t|^{-1})\right)\\
   &+\left(\sum_{\alpha_1+\alpha_2+\alpha_3=2}\frac{1}{\alpha_1!\alpha_2!\alpha_3!}\partial_{\xi_1}^{\alpha_1}\partial_{\xi_2}^{\alpha_2}\partial_{\xi_3}^{\alpha_3}\sigma_{1}(D_t)\delta_1^{\alpha_1}\delta_2^{\alpha_2}\delta_3^{\alpha_3}\sigma_{-2}(|D_t|^{-1})\right)\\
   &+\left(\sum_{\alpha_1+\alpha_2+\alpha_3=1}\partial_{\xi_1}^{\alpha_1}\partial_{\xi_2}^{\alpha_2}\partial_{\xi_3}^{\alpha_3}\sigma_{0}(D_t)\delta_1^{\alpha_1}\delta_2^{\alpha_2}\delta_3^{\alpha_3}\sigma_{-2}(|D_t|^{-1})\right)\\
   &+\left(\sum_{\alpha_1+\alpha_2+\alpha_3=2}\frac{1}{\alpha_1!\alpha_2!\alpha_3!}\partial_{\xi_1}^{\alpha_1}\partial_{\xi_2}^{\alpha_2}\partial_{\xi_3}^{\alpha_3}\sigma_{0}(D_t)\delta_1^{\alpha_1}\delta_2^{\alpha_2}\delta_3^{\alpha_3}\sigma_{-1}(|D_t|^{-1})\right).\\
   \end{align*}
   Now, one should notice that Wodzicki residue of a matrix pseudodifferential operator involves a trace taken over  the matrix coefficients. By analyzing the terms involved in $\sigma_{-3}(D_t|D_t|^{-1})$ and using the trace identities for gamma matrices (cf. e.g. \cite{Folland-book}),
 we see that the only contribution is from the the terms whose matrix coefficient consist of either one $\gamma$ matrix or product of three. Next, we use the following identities,
  \begin{align}
&\tr(\gamma^{\lambda})=0,\\
&\tr(\gamma^{\lambda}\gamma^{\mu}\gamma^{\nu})=i\epsilon^{\lambda\mu\nu}\tr(I),
\end{align}
where $\epsilon^{\lambda\mu\nu}$ is the Levi-Civita symbol.
By observing that
$$\sum_{\lambda,\mu,\nu}\epsilon^{\lambda,\mu,\nu}=0,$$
 we conclude that
\begin{equation}
\Wres\left(D_t|D_t|^{-1}\right)=\tau\left(\int_{|\xi|=1}\tr\left(\res(D_t|D_t|^{-1}\right)d\xi\right)=0.
\end{equation}
 \end{proof}
 \begin{rmk}\normalfont
 In \cite{Bismut-Freed}, Bismut-Freed showed that in fact, for a twisted Dirac operator on a Riemannian Spin manifold, the {\it local eta-residue}, $\tr \left(\res_x\left(D|D|^{-1}\right)\right)$ vanishes. The above and the following results confirm the same vanishing in the noncommutative case $\T^3$.
 \end{rmk}
    The following result proves the regularity at zero of eta function for the coupled Dirac operator $D+A$ on $\T^3$.
    \begin{prop}\label{eta-coupled}
    Consider the coupled Dirac operator $\slashed{\partial}+\slashed{A}$ on $\mathcal{H}_{\tau}\otimes\mathbb{C}^2$ over $\T^3$, then $$\Res_{z=0} \eta_{D+A}(z)=0.$$  
    \end{prop}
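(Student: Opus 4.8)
The plan is to mimic the proof of Proposition \ref{eta-reg}, replacing the conformally rescaled operator $e^{th/2}De^{th/2}$ by the coupled operator $D+A = \slashed{\partial}+\slashed{A}$. By Proposition \ref{eta-res} we have $\Res_{z=0}\eta_{D+A}(z) = \Wres\bigl((D+A)|D+A|^{-1}\bigr)$, so it suffices to show the local eta-residue vanishes: that is, to compute $\sigma_{-3}\bigl((D+A)|D+A|^{-1}\bigr)$ and check that after taking the matrix trace over $M_2(\mathbb{C})$ and integrating over the unit sphere in $\xi$-space, nothing survives. First I would record the symbols: $\sigma(D+A) = (\xi_\mu + A_\mu)\otimes\gamma^\mu$, which splits into the order-one piece $\sigma_1 = \xi_\mu\otimes\gamma^\mu$ and the order-zero piece $\sigma_0 = A_\mu\otimes\gamma^\mu$ (note that this is considerably simpler than in Proposition \ref{eta-reg}, since $\sigma_1$ is scalar-coefficiented and $\xi$-independent in its $\mathcal{A}$-part). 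Then $\sigma((D+A)^2)$ has homogeneous components of degrees $2,1,0$ computed via the product formula of Proposition 3.1, and from these one extracts $\sigma_1(|D+A|)$, $\sigma_0(|D+A|)$, $\sigma_{-1}(|D+A|)$ by taking the square root, exactly as in the preceding proof.

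Next I would invert: using $|D+A|^{-1}|D+A| = 1$ and the recursive scheme already displayed in the proof of Proposition \ref{eta-reg}, compute $\sigma_{-1}(|D+A|^{-1}), \sigma_{-2}(|D+A|^{-1}), \sigma_{-3}(|D+A|^{-1}), \sigma_{-4}(|D+A|^{-1})$. Then form the star product $\sigma(D+A)\star\sigma(|D+A|^{-1}) = \bigl(\sigma_1(D+A)+\sigma_0(D+A)\bigr)\star\bigl(\sigma_{-1}(|D+A|^{-1})+\cdots\bigr)$ and collect all contributions of homogeneity degree $-3$; this gives $\sigma_{-3}\bigl((D+A)|D+A|^{-1}\bigr)$ as a finite sum of terms, each of which is a product of $\mathcal{A}$-valued coefficients times a product of gamma matrices. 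The crucial structural observation, just as before, is that the gamma-matrix part of every term is a product of an \emph{odd} number of $\gamma$'s — one or three — because $\sigma_1$ and $\sigma_0$ each carry exactly one $\gamma$, the symbol of $(D+A)^2$ and hence of $|D+A|^{\pm 1}$ carries an \emph{even} number of $\gamma$'s in each homogeneous piece, and one factor of $\sigma_{1,0}(D+A)$ in front adds one more. Using $\tr(\gamma^\lambda) = 0$, $\tr(\gamma^\lambda\gamma^\mu\gamma^\nu) = i\epsilon^{\lambda\mu\nu}\tr(I)$, and the fact that $\epsilon^{\lambda\mu\nu}$ is totally antisymmetric while the $\xi$-dependence that remains after the $d\xi$-integration over $|\xi|=1$ is symmetric in the relevant indices (so $\sum_{\lambda,\mu,\nu}\epsilon^{\lambda\mu\nu}(\cdots) = 0$), the residue integral $\tau\bigl(\int_{|\xi|=1}\tr\,\sigma_{-3}\,d\xi\bigr)$ vanishes.

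The main obstacle is bookkeeping rather than conceptual: one must be careful that the derivative terms $\partial_\xi^\alpha\sigma_a(D+A)\,\delta^\alpha\sigma_b(|D+A|^{-1})$ in the star product genuinely preserve the parity of the number of gamma matrices — differentiation in $\xi$ does not touch the Clifford factors, and $\delta^\alpha$ acts only on the $\mathcal{A}$-coefficients, so parity is indeed preserved, but this needs to be stated. A secondary point is to confirm that, unlike the three-index terms which die by antisymmetry, there are no surviving single-$\gamma$ terms: these vanish directly from $\tr(\gamma^\lambda) = 0$. One should also note at the outset that, by the Remark following Proposition \ref{eta-res}, $D+A$ is self-adjoint and elliptic of order one, so $|D+A|$ and its complex powers are well-defined and the machinery of Propositions \ref{thmLaurent}--\ref{Laurent exp} applies verbatim; in particular the absolute value $|D+A|$ is interpreted modulo the finite-rank projection onto $\ker(D+A)$, which does not affect the residue computation since finite-rank (smoothing) operators have vanishing Wodzicki residue.
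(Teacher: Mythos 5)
Your proof follows the paper's own argument essentially verbatim: decompose $\sigma(D+A)$ into $\sigma_1 = \xi_\mu\otimes\gamma^\mu$ and $\sigma_0 = A_\mu\otimes\gamma^\mu$, compute $\sigma((D+A)^2)$ by the product formula, extract $\sigma_1,\sigma_0,\sigma_{-1}$ of $|D+A|$ from the square root, recursively build $\sigma_{-1},\dots,\sigma_{-4}$ of the inverse, assemble $\sigma_{-3}\bigl((D+A)|D+A|^{-1}\bigr)$ via the star product (noting that the two terms with $\partial_\xi^\alpha\sigma_0(D+A)$ vanish because $\sigma_0(D+A)$ is $\xi$-independent), and kill the residue by the gamma-trace identities after observing that every term carries an odd number of Clifford factors. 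Your explicit parity bookkeeping — $\xi$-differentiation and $\delta^\alpha$ do not touch Clifford factors, so the even-gamma parity of each homogeneous component of $|D+A|^{\pm1}$ is preserved, and one prefactor of $\sigma_{1}$ or $\sigma_0$ of $D+A$ makes the total odd — is exactly the observation the paper relies on but states more tersely; likewise your remark that the $\epsilon^{\lambda\mu\nu}$-contraction dies against the symmetric post-sphere-integration coefficients is a slightly more careful phrasing of what the paper writes as $\sum_{\lambda,\mu,\nu}\epsilon^{\lambda\mu\nu}=0$. The appeal to Wodzicki-residue invariance under smoothing perturbations to handle a possible nontrivial kernel is a correct and harmless addition not spelled out at this point in the paper.
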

    \begin{proof}
 \
The coupled Dirac operator $D$ and $D^2$ are given by
\begin{align*}
D&=\partial_{\mu}\otimes\gamma^{\mu}+A_{\mu}\otimes\gamma^{\mu}\\
D^2&=\partial_{\mu}\partial_{\lambda}\otimes\gamma^{\mu}\gamma^{\lambda}+\partial_{\mu}(A_{\lambda})\otimes\gamma^{\mu}\gamma^{\lambda}+
   A_{\mu}\partial_{\lambda}\otimes\gamma^{\mu}\gamma^{\lambda}+A_{\mu}A_{\lambda}\otimes\gamma^{\mu}\gamma^{\lambda}.\\
   \end{align*}
   Note that $\partial_{\mu}A_{\lambda}(a)=\partial_{\mu}(A_{\lambda})a+A_{\lambda}\partial_{\mu}(a)$ and also $A_{\mu}A_{\lambda}\neq A_{\lambda}A_{\mu}$.
   Hence,
      $$D^2=\sum_{\mu}\partial_{\mu}^2\otimes I+2A_{\mu}\partial_{\lambda}\otimes\gamma^{\mu}\gamma^{\lambda}+\partial_{\mu} (A_{\lambda})\otimes\gamma^{\mu}\gamma^{\lambda}+A_{\mu}A_{\lambda}\otimes\gamma^{\mu}\gamma^{\lambda},$$
where $\partial_{\mu} (A_{\lambda})$ in the third term above is a multiplication operator.
The symbols are:
   $$\sigma(D)=\slashed{\xi}+\slashed{A},$$
  and
  $$\sigma(D^2)=\xi^2+2A_{\mu}\xi_{\lambda}\otimes\gamma^{\mu}\gamma^{\lambda}+\partial_{\mu} (A_{\lambda})\otimes\gamma^{\mu}\gamma^{\lambda}+A_{\mu}A_{\lambda}\otimes\gamma^{\mu}\gamma^{\lambda},$$
  where $\xi^2=\sum_{\mu}\xi_{\mu}^2\otimes I$.
   Now,
   \begin{align*}
   \sigma(|D|)&=\sqrt{\xi^2+2A_{\mu}\xi_{\lambda}\otimes\gamma^{\mu}\gamma^{\lambda}+\partial_{\mu} (A_{\lambda})\otimes\gamma^{\mu}\gamma^{\lambda}+A_{\mu}A_{\lambda}\otimes\gamma^{\mu}\gamma^{\lambda}}\\
   &\sim \sigma_1(\xi)+\sigma_0(\xi)+\sigma_{-1}(\xi)+\cdots,
   \end{align*}
 where
 \begin{align*}
\sigma_1(\xi)&=\sqrt{\xi^2}\otimes I,\\
\sigma_0(\xi)&=\frac{\slashed{A}\slashed{\xi}}{\sqrt{\xi^2}},\\
   \sigma_{-1}(\xi)&=\frac{1}{2\sqrt{\xi^2}}\left(\partial_{\mu}(A_{\lambda})\otimes\gamma^{\mu}\gamma^{\lambda}+A_{\mu}A_{\lambda}\otimes\gamma^{\mu}\gamma^{\lambda}\right)\\
&-\frac{1}{2\xi^2\sqrt{\xi^2}}\left(A_{\mu}\xi_{\lambda}A_{\nu}\xi_{\rho}\otimes\gamma^{\mu}\gamma^{\lambda}\gamma^{\nu}\gamma^{\rho}\right).
\end{align*}
  By using $\sigma(|D|)\star\sigma(|D|^{-1})\sim1$ and by recursive computation, we get:
   $$\sigma_{-1}(|D|^{-1})=\frac{1}{\sqrt{\xi^2}}\otimes I.$$
  \begin{align*}   
   \sigma_{-2}(|D|^{-1})&=\\
   -\frac{1}{\sqrt{\xi^2}}\otimes I&\left(\sigma_0(|D|)\sigma_{-1}(|D|^{-1})+\sum_{\alpha_1+\alpha_2+\alpha_3=1}\partial_{\xi_1}^{\alpha_1}\partial_{\xi_2}^{\alpha_2}\partial_{\xi_3}^{\alpha_3}\sigma_{1}(|D|)\delta_1^{\alpha_1}\delta_2^{\alpha_2}\delta_3^{\alpha_3}\sigma_{-1}(|D|^{-1})\right)\\
   &=-\frac{1}{\sqrt{\xi^2}}\otimes I\left(\frac{\slashed{A}\slashed{\xi}}{\sqrt{\xi^2}}.\frac{1}{\sqrt{\xi^2}}\otimes I\right)=-\frac{\slashed{A}\slashed{\xi}}{\xi^2\sqrt{\xi^2}}.\\
   \end{align*}
   \begin{align*}
   \sigma_{-3}(|D|^{-1})&=-\frac{1}{\sqrt{\xi^2}}\otimes I\{ \sigma_{-1}(|D|)\sigma_{-1}(|D|^{-1})+\sigma_0(|D|)\sigma_{-2}(|D|^{-1})\\
   &+\sum_{\alpha_1+\alpha_2+\alpha_3=1}\partial_{\xi_1}^{\alpha_1}\partial_{\xi_2}^{\alpha_2}\partial_{\xi_3}^{\alpha_3}\sigma_{0}(|D|)\delta_1^{\alpha_1}\delta_2^{\alpha_2}\delta_3^{\alpha_3}\sigma_{-1}(|D|^{-1})\\
   &+\sum_{\alpha_1+\alpha_2+\alpha_3=1}\partial_{\xi_1}^{\alpha_1}\partial_{\xi_2}^{\alpha_2}\partial_{\xi_3}^{\alpha_3}\sigma_{1}(|D|)\delta_1^{\alpha_1}\delta_2^{\alpha_2}\delta_3^{\alpha_3}\sigma_{-2}(|D|^{-1})\\
  &+\sum_{\alpha_1+\alpha_2+\alpha_3=2}\frac{1}{\alpha_1!\alpha_2!\alpha_3!}\partial_{\xi_1}^{\alpha_1}\partial_{\xi_2}^{\alpha_2}\partial_{\xi_3}^{\alpha_3}\sigma_{1}(|D|)\delta_1^{\alpha_1}\delta_2^{\alpha_2}\delta_3^{\alpha_3}\sigma_{-1}(|D|^{-1})\}\\
   &=-\frac{1}{\sqrt{\xi^2}}\otimes I\left(\frac{\slashed{A}\slashed{\xi}}{\sqrt{\xi^2}}.\frac{-\slashed{A}\slashed{\xi}}{\xi^2\sqrt{\xi^2}}+\partial_{\xi}^1(\sqrt{\xi^2}\otimes I).\delta^1(\frac{-\slashed{A}\slashed{\xi}}{\xi^2\sqrt{\xi^2}})\right)\\
   &=\frac{\xi_{\mu}\xi_{\nu}A_{\rho}A_{\lambda}}{\xi^4\sqrt{\xi^2}}\otimes\gamma^{\mu}\gamma^{\nu}\gamma^{\rho}\gamma^{\lambda}+\frac{1}{\sqrt{\xi^2}}\partial_{\xi}^1(\sqrt{\xi^2})\delta^1(\frac{A_{\mu}\xi_{\nu}}{\xi^2\sqrt{\xi^2}})\otimes\gamma^{\mu}\gamma^{\nu}.\\
   \end{align*}
   \begin{align*}
   \sigma_{-4}(|D|^{-1})&=-\frac{1}{\sqrt{\xi^2}}\otimes I\{\sigma_{-2}(|D|)\sigma_{-1}(|D|^{-1})+\sigma_{-1}(|D|)\sigma_{-2}(|D|^{-1})\\
   &+\sigma_0(|D|)\sigma_{-3}(|D|^{-1})\\
   &+\sum_{\alpha_1+\alpha_2+\alpha_3=1}\partial_{\xi_1}^{\alpha_1}\partial_{\xi_2}^{\alpha_2}\partial_{\xi_3}^{\alpha_3}\sigma_{-1}(|D|)\delta_1^{\alpha_1}\delta_2^{\alpha_2}\delta_3^{\alpha_3}\sigma_{-1}(|D|^{-1})\\
   &+\sum_{\alpha_1+\alpha_2+\alpha_3=1}\partial_{\xi_1}^{\alpha_1}\partial_{\xi_2}^{\alpha_2}\partial_{\xi_3}^{\alpha_3}\sigma_{0}(|D|)\delta_1^{\alpha_1}\delta_2^{\alpha_2}\delta_3^{\alpha_3}\sigma_{-2}(|D|^{-1})\\
   &+\sum_{\alpha_1+\alpha_2+\alpha_3=2}\frac{1}{\alpha_1!\alpha_2!\alpha_3!}\partial_{\xi_1}^{\alpha_1}\partial_{\xi_2}^{\alpha_2}\partial_{\xi_3}^{\alpha_3}\sigma_{0}(|D|)\delta_1^{\alpha_1}\delta_2^{\alpha_2}\delta_3^{\alpha_3}\sigma_{-1}(|D|^{-1})\\
  &+\sum_{\alpha_1+\alpha_2+\alpha_3=1}\partial_{\xi_1}^{\alpha_1}\partial_{\xi_2}^{\alpha_2}\partial_{\xi_3}^{\alpha_3}\sigma_{1}(|D|)\delta_1^{\alpha_1}\delta_2^{\alpha_2}\delta_3^{\alpha_3}\sigma_{-3}(|D|^{-1})\\
   &+\sum_{\alpha_1+\alpha_2+\alpha_3=2}\frac{1}{\alpha_1!\alpha_2!\alpha_3!}\partial_{\xi_1}^{\alpha_1}\partial_{\xi_2}^{\alpha_2}\partial_{\xi_3}^{\alpha_3}\sigma_{1}(|D|)\delta_1^{\alpha_1}\delta_2^{\alpha_2}\delta_3^{\alpha_3}\sigma_{-2}(|D|^{-1})\\
   &+\sum_{\alpha_1+\alpha_2+\alpha_3=3}\frac{1}{\alpha_1!\alpha_2!\alpha_3!}\partial_{\xi_1}^{\alpha_1}\partial_{\xi_2}^{\alpha_2}\partial_{\xi_3}^{\alpha_3}\sigma_{1}(|D|)\delta_1^{\alpha_1}\delta_2^{\alpha_2}\delta_3^{\alpha_3}\sigma_{-1}(|D|^{-1})\}\\
   &=-\frac{1}{\sqrt{\xi^2}}\otimes I\{\sigma_0(|D|)\sigma_{-3}(|D|^{-1})+\sum_{\alpha_1+\alpha_2+\alpha_3=1}\partial_{\xi_1}^{\alpha_1}\partial_{\xi_2}^{\alpha_2}\partial_{\xi_3}^{\alpha_3}\sigma_{0}(|D|)\delta_1^{\alpha_1}\delta_2^{\alpha_2}\delta_3^{\alpha_3}\sigma_{-2}(|D|^{-1})\\
   &+\sum_{\alpha_1+\alpha_2+\alpha_3=1}\partial_{\xi_1}^{\alpha_1}\partial_{\xi_2}^{\alpha_2}\partial_{\xi_3}^{\alpha_3}\sigma_{1}(|D|)\delta_1^{\alpha_1}\delta_2^{\alpha_2}\delta_3^{\alpha_3}\sigma_{-3}(|D|^{-1})\\
   &+\sum_{\alpha_1+\alpha_2+\alpha_3=2}\frac{1}{\alpha_1!\alpha_2!\alpha_3!}\partial_{\xi_1}^{\alpha_1}\partial_{\xi_2}^{\alpha_2}\partial_{\xi_3}^{\alpha_3}\sigma_{1}(|D|)\delta_1^{\alpha_1}\delta_2^{\alpha_2}\delta_3^{\alpha_3}\sigma_{-2}(|D|^{-1})\}.
   \end{align*}
   Therefore the symbol of $\sigma(D|D|^{-1})$ reads
   \begin{align*}  
       &\sigma(D|D|^{-1})\sim\left(\sigma_1(D)+\sigma_0(D)\right)\star\left(\sigma_{-1}(|D|^{-1})+\sigma_{-2}(|D|^{-1})+\sigma_{-3}(|D|^{-1})+\cdots\right)\\
  &\sim\left(\sigma_{1}(D)\star\sigma_{-1}(|D|^{-1})\right)+\left(\sigma_{1}(D)\star\sigma_{-2}(|D|^{-1})\right)\\
  &+\left(\sigma_{1}(D)\star\sigma_{-3}(|D|^{-1})\right)+\left(\sigma_1(D)\star\sigma_{-4}(|D|^{-1})\right)+\cdots\\
  &+\left(\sigma_{0}(D)\star\sigma_{-1}(|D|^{-1})\right)+\left(\sigma_{0}(D)\star\sigma_{-2}(|D|^{-1})\right)+\left(\sigma_0(D)\star\sigma_{-3}(|D|^{-1})\right)+\cdots.\\ 
   \end{align*}
   For $a=1,0$ and $b=-1,-2,-3.\cdots$, one has
   $$\sigma_a(D)\star\sigma_b(|D|^{-1})=\sum_{\alpha}\frac{1}{\alpha!}\partial_{\xi}^{\alpha}\sigma_a(D)\delta^{\alpha}\sigma_b(|D|^{-1}),$$
   and each term is of order $a-|\alpha|+b$. By collecting the terms of order $-3$ we get
   \begin{align*}
   \sigma_{-3}(D|D|^{-1})&\sim\left(\sigma_1(D)\sigma_{-4}(|D|^{-1})\right)+\left(\sigma_0(D)\sigma_{-3}(|D|^{-1})\right)\\
   &+\left(\sum_{\alpha_1+\alpha_2+\alpha_3=1}\partial_{\xi_1}^{\alpha_1}\partial_{\xi_2}^{\alpha_2}\partial_{\xi_3}^{\alpha_3}\sigma_{1}(D)\delta_1^{\alpha_1}\delta_2^{\alpha_2}\delta_3^{\alpha_3}\sigma_{-3}(|D|^{-1})\right)\\
   &+\left(\sum_{\alpha_1+\alpha_2+\alpha_3=2}\frac{1}{\alpha_1!\alpha_2!\alpha_3!}\partial_{\xi_1}^{\alpha_1}\partial_{\xi_2}^{\alpha_2}\partial_{\xi_3}^{\alpha_3}\sigma_{1}(D)\delta_1^{\alpha_1}\delta_2^{\alpha_2}\delta_3^{\alpha_3}\sigma_{-2}(|D|^{-1})\right)\\
   &+\left(\sum_{\alpha_1+\alpha_2+\alpha_3=1}\partial_{\xi_1}^{\alpha_1}\partial_{\xi_2}^{\alpha_2}\partial_{\xi_3}^{\alpha_3}\sigma_{0}(D)\delta_1^{\alpha_1}\delta_2^{\alpha_2}\delta_3^{\alpha_3}\sigma_{-2}(|D|^{-1})\right)\\
   &+\left(\sum_{\alpha_1+\alpha_2+\alpha_3=2}\frac{1}{\alpha_1!\alpha_2!\alpha_3!}\partial_{\xi_1}^{\alpha_1}\partial_{\xi_2}^{\alpha_2}\partial_{\xi_3}^{\alpha_3}\sigma_{0}(D)\delta_1^{\alpha_1}\delta_2^{\alpha_2}\delta_3^{\alpha_3}\sigma_{-1}(|D|^{-1})\right).\\
   \end{align*}
   Since $\sigma_0(D)$ has no $\xi$ dependence, the last two terms vanish and therefore we have:
    \begin{align*}
   \sigma_{-3}(D|D|^{-1})&\sim\left(\sigma_1(D)\sigma_{-4}(|D|^{-1})\right)+\left(\sigma_0(D)\sigma_{-3}(|D|^{-1})\right)\\
   &+\left(\sum_{\alpha_1+\alpha_2+\alpha_3=1}\partial_{\xi_1}^{\alpha_1}\partial_{\xi_2}^{\alpha_2}\partial_{\xi_3}^{\alpha_3}\sigma_{1}(D)\delta_1^{\alpha_1}\delta_2^{\alpha_2}\delta_3^{\alpha_3}\sigma_{-3}(|D|^{-1})\right)\\
   &+\left(\sum_{\alpha_1+\alpha_2+\alpha_3=2}\frac{1}{\alpha_1!\alpha_2!\alpha_3!}\partial_{\xi_1}^{\alpha_1}\partial_{\xi_2}^{\alpha_2}\partial_{\xi_3}^{\alpha_3}\sigma_{1}(D)\delta_1^{\alpha_1}\delta_2^{\alpha_2}\delta_3^{\alpha_3}\sigma_{-2}(|D|^{-1})\right).\\
   \end{align*}
   Finally, similar to the proof of Proposition \eqref{eta-reg}, one observes that the matrix coefficients of all terms in $\sigma_{-3}(D|D|^{-1})$ consist of either one $\gamma$ matrix or product of three $\gamma$ matrices.
  Again, by using the similar trace identities of $\gamma$ matrices identities we obtain,
$$\Wres(\frac{D}{|D|})=\tau\left(\res(D|D|^{-1})\right)=\tau\left(\int_{|\xi|=1}\tr(\sigma_{-3})d\xi\right)=0. $$
\end{proof}

     \subsection{Conformal invariance of $\eta_D(0)$}
    In this section we study the variations of $\eta_{D}(0)$ for the spectral triple $(\T^3,\mathcal{H}, D=\slashed{\partial})$. We consider the conformal variation of the Dirac operator and show that $\eta_D(0)$ will remain unchanged.
  
 Consider the commutative spectral triple $(C^{\infty}(M), L^2(M,S)_g,D_g)$ encoding the data of a closed $n$-dimensional spin Riemannian manifold with the spin Dirac operator on the space of spinors. By varying $g$ within its conformal class, we consider $\tilde{g}=k^{-2}g$ for some $k=e^{h}>0$ in $C^{\infty}(M)$. The  volume form for the perturbed metric is given by
$\dvol_{\tilde{g}}=k^{-n}\dvol_{g}$  and one has a unitary isomorphism $$U:L^2(M,S)_g\longrightarrow L^2(M,S)_{\tilde{g}}$$ by
$$U(\psi)=k^{\frac{n}{2}}\psi,$$
It can be shown that (cf. \cite{Hitchin-harmonic}) $D_{\tilde{g}}=k^{\frac{n+1}{2}}D_g k^{\frac{-n+1}{2}}$ and hence

$$U^*D_{\tilde{g}}U=k^{\frac{-n}{2}}(k^{\frac{n+1}{2}}D_gk^{\frac{-n+1}{2}})k^{\frac{n}{2}}=\sqrt{k}D_g\sqrt{k}.$$
 
The above property of the Dirac operator is usually referred to as being conformally covariant. It is a known fact that $\eta_D(0)$ for a Dirac operator on an odd dimensional manifold is a conformal invariant \cite{APS}, i.e. it is invariant under the conformal changes of the metric. In a more general context, in \cite{Rosenberg-covariant, Parker-Rosenberg} and \cite{Paycha-Rosenberg2006}, by using variational techniques it was shown that for a coformally covariant self-adjoint differential operator $A$, $\eta_A(0)$ is a conformal invariant.

     In the framework of noncommutative geometry, conformal perturbation of the  metric is implemented by changing the volume form \cite{Cohen-Connes1992}, namely, by fixing a positive element $k=e^h$ for $h^*=h$ in $C^{\infty}(\mathbb{T}^n_{\theta})$, one constructs the following positive functional
    $$\varphi_k(a)=\tau (ak^{-n}).$$
    By normalizing the above functional one obtains a state which we denote by $\varphi$.\\
    The state $\varphi$ defines an inner product 
    $$\left<a,b\right>_{\varphi}=\varphi(b^*a)\quad a,b\in C^{\infty}(\mathbb{T}^n_{\theta})$$
   and hence one obtains a Hilbert space $\mathcal{H}_{\varphi}$ by GNS construction. The algebra $C^{\infty}(\mathbb{T}^n_{\theta})$ acts unitarilly on $\mathcal{H}_{\varphi}$ by left regular representation and the right multiplication operator $R_{k^{n/2}}$ extends to a unitary map $U_0:\mathcal{H}_{\tau}\longrightarrow\mathcal{H}_{\varphi}$. In fact one has,
   $$\left<U_0a,U_0b\right>_{\varphi}=\varphi(k^{n/2}b^*ak^{n/2})=\tau(k^{n/2}b^*ak^{n/2}k^{-n})=\tau(b^*a)=\left<a,b\right>_{\tau}.$$ 
   
   We put $\mathcal{\tilde{H}}=H_{\varphi}\otimes\mathbb{C}^N$, the action of $C^{\infty}(\mathbb{T}^n_{\theta})$ on $\mathcal{\tilde{H}}$ is given by 
$$a\longrightarrow a\otimes1$$
    and the map 
    $$U=U_0\otimes I:\mathcal{H}\longrightarrow\mathcal{\tilde{H}}$$
is a unitary equivalence between the two Hilbert spaces. \\
Now we consider the operator $\tilde{D}=R_{k^{\frac{n+1}{2}}}DR_{k^{\frac{-n+1}{2}}}$.
\begin{prop}
$(C^{\infty}(\mathbb{T}^n_{\theta}),\mathcal{\tilde{H}},\tilde{D})$ and $(C^{\infty}(\mathbb{T}^n_{\theta}),\mathcal{H},R_{\sqrt{k}}DR_{\sqrt{k}})$ are spectral triples, and the map $U$ is a unitary equivalence between them.
\end{prop}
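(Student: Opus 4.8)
The plan is to reduce everything to one of the two triples and then transport the conclusion along $U$. \emph{Step 1: the unitary equivalence.} First I would record that $U=U_0\otimes I$ with $U_0=R_{k^{n/2}}$ intertwines the left actions: because left multiplication commutes with right multiplication on the GNS space, $U(\pi(a)\otimes1)U^{*}=a\otimes 1$ on $\widetilde{\mathcal H}$ for every $a\in C^\infty(\mathbb{T}^n_\theta)$. For the operators I would use that $R$ is anti-multiplicative, $R_aR_b=R_{ba}$, so that $R_{k^{-n/2}}R_{k^{(n+1)/2}}=R_{\sqrt k}$ and $R_{k^{(-n+1)/2}}R_{k^{n/2}}=R_{\sqrt k}$, and that $D=\delta_\mu\otimes\gamma^\mu$ as well as each $R_{k^{a}}\otimes I$ act only on the first tensor leg; exactly as in the commutative computation displayed above this gives
$$U^{*}\widetilde D\,U \;=\; R_{k^{-n/2}}R_{k^{(n+1)/2}}\,D\,R_{k^{(-n+1)/2}}R_{k^{n/2}} \;=\; R_{\sqrt k}\,D\,R_{\sqrt k}$$
on the common core $C^{\infty}(\mathbb{T}^n_\theta)\otimes\mathbb C^{N}$. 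Hence it suffices to verify the spectral-triple axioms for $(C^{\infty}(\mathbb{T}^n_\theta),\mathcal H,D')$ with $D'=R_{\sqrt k}DR_{\sqrt k}$, after which the axioms for $(C^{\infty}(\mathbb{T}^n_\theta),\widetilde{\mathcal H},\widetilde D)$ follow by conjugating with the unitary $U$.

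\emph{Step 2: $D'$ is a Dirac operator.} Since $k=e^{h}$ with $h=h^{*}$, the element $\sqrt k$ is positive and invertible in $C^{\infty}(\mathbb{T}^n_\theta)$, and from the trace identity $\tau(xy)=\tau(yx)$ one checks at once that $R_{\sqrt k}$ is bounded, positive, self-adjoint, with bounded inverse $R_{k^{-1/2}}$. Conjugating the self-adjoint operator $D$ by the bounded, self-adjoint, invertible operator $R_{\sqrt k}$ produces a self-adjoint operator, with domain $R_{k^{-1/2}}(\mathrm{Dom}\,D)$. For the bounded-commutator condition, I would use once more that $\pi(a)$ commutes with $R_{\sqrt k}$ to obtain $[D',\pi(a)\otimes1]=R_{\sqrt k}\big(\pi(\delta_\mu a)\otimes\gamma^\mu\big)R_{\sqrt k}$, which is bounded.

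\emph{Step 3: compact resolvent.} This is the step I expect to require real work, since the multiplicative perturbation involved is not small and a naive Kato--Rellich argument does not apply. The device is to write, again using $R_aR_b=R_{ba}$,
$$D'+i \;=\; R_{\sqrt k}\,\big(D+iR_{k^{-1}}\big)\,R_{\sqrt k},$$
so that it is enough to show $D+iR_{k^{-1}}$ is invertible with compact inverse. Invertibility should follow from the numerical-range estimate $\operatorname{Im}\langle(D+iR_{k^{-1}})x,x\rangle=\langle R_{k^{-1}}x,x\rangle\ge \|k\|^{-1}\|x\|^{2}$ together with the analogous estimate for the adjoint $D-iR_{k^{-1}}$, yielding a two-sided lower bound and hence bijectivity with bounded inverse. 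Then factoring $D+iR_{k^{-1}}=(D+i)\big(1+i(D+i)^{-1}(R_{k^{-1}}-1)\big)$ and inverting exhibits $(D+iR_{k^{-1}})^{-1}$ as a bounded operator composed with the compact operator $(D+i)^{-1}$, hence compact; therefore $(D'+i)^{-1}=R_{k^{-1/2}}(D+iR_{k^{-1}})^{-1}R_{k^{-1/2}}$ is compact, and $(\widetilde D+i)^{-1}=U(D'+i)^{-1}U^{*}$ is compact as well. (Alternatively one can note that $D'$ is a first-order elliptic element of Connes' calculus with invertible principal symbol and invoke the standard fact that self-adjoint elliptic operators of positive order on $\mathbb{T}^n_\theta$ have compact resolvent, but the elementary argument above is self-contained.) Everything outside Step 3 is bookkeeping with the anti-multiplicativity of $R$ and the commutation of left with right multiplication.
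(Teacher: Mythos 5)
Your argument is correct and follows the same basic route as the paper --- establish the operator identity $U^{*}\widetilde D U=R_{\sqrt k}DR_{\sqrt k}$ from anti-multiplicativity of $R$, reduce all verifications to one triple, and transport by $U$ --- but you are considerably more thorough than the paper's proof. The paper only records that left multiplication commutes with right multiplication (so commutators are bounded), that the $\varphi$- and $\tau$-norms are equivalent, and that the unitary equivalence "follows from the definition"; it says nothing at all about self-adjointness or compact resolvent. Your Step 2 (self-adjointness via conjugation by the bounded, positive, invertible $R_{\sqrt k}$) and especially Step 3 are genuine additions: the factorization $D'+i=R_{\sqrt k}(D+iR_{k^{-1}})R_{\sqrt k}$, the numerical-range lower bound $\operatorname{Im}\langle(D+iR_{k^{-1}})x,x\rangle=\tau(x^{*}xk^{-1})\ge\|k\|^{-1}\|x\|^{2}$ (which uses positivity of $\tau$ and $k^{-1}\ge\|k\|^{-1}$), and the compactness of the resolvent via the decomposition $(D+iR_{k^{-1}})^{-1}=\bigl(1+i(D+i)^{-1}(R_{k^{-1}}-1)\bigr)^{-1}(D+i)^{-1}$ together constitute the one step the paper leaves entirely silent, and your argument there is sound (the middle factor is automatically invertible once you know both $D+i$ and $D+iR_{k^{-1}}$ are). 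The alternative you mention --- invoking ellipticity within Connes' calculus --- is closer to how the paper would plausibly justify compact resolvent if pressed, so it is worth keeping both remarks. In short: same approach, but you have proved something the paper merely asserts.
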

  \begin{proof}
  Note that left multiplication by an element $a\in C^{\infty}(\mathbb{T}^n_{\theta}) $ commutes with right multiplication operators $R_{k^2}$, $R_{k^{-1}}$ and $R_{\sqrt{k}}$ . Also the two norms coming from $<,>_{\varphi}$ and $<,>_{\tau}$ are equivalent. So both commutators $[a,\tilde{D}]$ and $[a,\sqrt{k}D\sqrt{k}]$ are bounded. The unitary equivalence easily follows from definition of $U$ and $\tilde{D}$. 
  \end{proof}
  In next step, we convert the right multiplications in $R_{\sqrt{k}}DR_{\sqrt{k}}$ to left multiplication using the real structure on noncommutative tori (see example \ref{real-tori}) .
  It is easily seen that 
  $$J_0R_{\sqrt{k}}\partial_{\mu}R_{\sqrt{k}}J_0=-\sqrt{k}\partial_{\mu}\sqrt{k},$$
  and 
  $$JR_{\sqrt{k}}DR_{\sqrt{k}}J=\sqrt{k}D\sqrt{k}.$$
  
   Since $\sqrt{k}D\sqrt{k}$ is iso-spectral  to $R_{\sqrt{k}}DR_{\sqrt{k}}$ (being intertwined by $J$), the following definition is reasonable.
  \begin{defi}
  The conformal perturbation of the Dirac operator $D$ is the 1-parameter family $(C^{\infty}(\mathbb{T}^n_{\theta}),\mathcal{H},D_t)$, where 
  $$D_t=e^{\frac{th}{2}}De^{\frac{th}{2}},~~~h=h^*\in C^{\infty}(\mathbb{T}^n_{\theta}).$$
    \end{defi}
 We need the following formula for variation of eta function.
 \begin{lem}\label{eta-var}
 Let $\{D_t\}$ be a smooth 1-parameter family of invertible self-adjoint elliptic operators of order $d$, then
 $$\frac{d}{dt}\eta_{D_t}(z)=-z\TR(\dot{D_t}(D^2_t)^{-(\frac{z+1}{2})}).$$
 \end{lem}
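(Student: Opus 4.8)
The plan is to differentiate $\eta_{D_t}(z)=\TR\big(D_t\,(D_t^2)^{-\frac{z+1}{2}}\big)$ in $t$ under the trace. Write $s=\frac{z+1}{2}$ and $P_t=D_t^2$; since each $D_t$ is self‑adjoint and invertible, $P_t$ is a positive invertible elliptic operator depending smoothly on $t$, and there is no kernel projection to carry ($\Pi_{P_t}=0$). Both sides of the asserted identity are meromorphic functions of $z$ — the left side by Proposition \ref{eta-res} applied to each $D_t$, the right side by Proposition \ref{Laurent exp} applied to the holomorphic family $\dot D_t(D_t^2)^{-\frac{z+1}{2}}$ — so it suffices to prove the identity on the half‑plane $\Re(z)\gg0$. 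There $D_t(D_t^2)^{-s}$ is trace class, the canonical trace is the ordinary trace, and $t\mapsto D_t(D_t^2)^{-s}$ is a smooth family of trace‑class operators, so one may differentiate under $\Tr$:
\begin{equation*}
\frac{d}{dt}\eta_{D_t}(z)=\Tr\big(\dot D_t\,P_t^{-s}\big)+\Tr\big(D_t\,\tfrac{d}{dt}P_t^{-s}\big).
\end{equation*}

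For the second term I would use the Mellin representation $P_t^{-s}=\frac{1}{\Gamma(s)}\int_0^\infty u^{s-1}e^{-uP_t}\,du$ (valid here since $P_t\geq c>0$ and $\Re(s)>0$) together with Duhamel's principle $\frac{d}{dt}e^{-uP_t}=-\int_0^u e^{-(u-v)P_t}\dot P_t\,e^{-vP_t}\,dv$; the virtue of this route over the Cauchy formula \eqref{Qz} is that the heat operators are smoothing, so every operator appearing under $\Tr$ below is genuinely trace class and cyclicity of $\Tr$ is unrestricted. Since $D_t$ commutes with $P_t$, hence with $e^{-vP_t}$, cyclicity and the semigroup law collapse $\Tr\big(D_t e^{-(u-v)P_t}\dot P_t e^{-vP_t}\big)=\Tr\big(e^{-uP_t}D_t\dot P_t\big)$, which is independent of $v$; carrying out the $v$‑ and $u$‑integrations and using $\frac{1}{\Gamma(s)}\int_0^\infty u^{s}e^{-uP_t}\,du=s\,P_t^{-s-1}$ gives $\Tr\big(D_t\tfrac{d}{dt}P_t^{-s}\big)=-s\,\Tr\big(P_t^{-s-1}D_t\dot P_t\big)$.

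It remains to simplify $\Tr\big(P_t^{-s-1}D_t\dot P_t\big)$ via $\dot P_t=\dot D_tD_t+D_t\dot D_t$, so that $D_t\dot P_t=D_t\dot D_tD_t+P_t\dot D_t$: the $P_t\dot D_t$ part contributes $\Tr(P_t^{-s}\dot D_t)$ directly, while for the $D_t\dot D_tD_t$ part, cycling the trailing $D_t$ to the front and using $[D_t,P_t^{-s-1}]=0$ yields $\Tr(P_t^{-s-1}D_t^2\dot D_t)=\Tr(P_t^{-s}\dot D_t)$ as well. Hence $\Tr\big(P_t^{-s-1}D_t\dot P_t\big)=2\,\Tr(P_t^{-s}\dot D_t)$ and $\Tr\big(D_t\tfrac{d}{dt}P_t^{-s}\big)=-2s\,\Tr(P_t^{-s}\dot D_t)$. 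Adding the first term and cycling once more, $\frac{d}{dt}\eta_{D_t}(z)=(1-2s)\,\Tr\big(\dot D_t\,P_t^{-s}\big)=-z\,\Tr\big(\dot D_t\,(D_t^2)^{-\frac{z+1}{2}}\big)$ for $\Re(z)\gg0$, since $1-2s=-z$; analytic continuation then gives the identity for all $z$. The main obstacle here is not the (short) algebra but making the trace manipulations legitimate — differentiating the complex power, interchanging the integrals with the trace, and cyclicity — which is precisely what the Mellin/heat‑semigroup rewriting together with the reduction to $\Re(z)\gg0$ is designed to take care of.
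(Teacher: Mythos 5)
Your proof is correct, but its route is genuinely different from the paper's. The paper works with the Cauchy/resolvent representation of the sign operator: it first replaces $D$ by $D^k$ for a large odd $k$ (so that $(D_t-\lambda)^{-1}$ is trace class uniformly in $z$), writes $\eta_{D_t}(z)$ as a pair of contour integrals $\frac{1}{2\pi i}\int_{\Gamma_1}\lambda^{-z}\Tr(D_t-\lambda)^{-1}d\lambda-\int_{\Gamma_2}(-\lambda)^{-z}\Tr(D_t-\lambda)^{-1}d\lambda$ around the positive and negative spectrum, differentiates the resolvent under the trace to produce $(D_t-\lambda)^{-2}$, and then integrates by parts in $\lambda$ against $\lambda^{-z}$ so that the factor $-z$ appears in a single stroke. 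You instead push to the half-plane $\Re(z)\gg0$, apply the product rule to $D_t(D_t^2)^{-s}$, handle $\frac{d}{dt}(D_t^2)^{-s}$ via the Mellin transform and Duhamel's formula, and extract the factor $-z=1-2s$ from the algebraic identity $\dot P_t=\dot D_tD_t+D_t\dot D_t$ combined with $[D_t,P_t]=0$ and cyclicity; analytic continuation (via Propositions \ref{thmLaurent} and \ref{Laurent exp}) then finishes. Your route is somewhat longer in computation but arguably cleaner in its justifications: every operator under $\Tr$ is smoothing by construction so cyclicity is unproblematic, and you avoid the slightly slick $D\mapsto D^k$ substitution. The paper's route is shorter once the double contour is in place and does not require explicitly splitting $\eta$ into a product $D_t\cdot(D_t^2)^{-s}$. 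Both are sound.
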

 \begin{proof}
 For $k>0$ odd, $\eta_{D^k}(z)=\eta_D(kz)$, hence we can replace $D$ by $D^k$ for $k$ large enough.\\
 For $d$ large enough, $(D-\lambda)^{-1}$ is trace class, so one can write
 $$\eta_{D_t}(z)=\frac{1}{2\pi i}\int_{\Gamma_1}\lambda^{-z}\Tr(D_t-\lambda)^{-1}d\lambda-\int_{\Gamma_2}(-\lambda)^{-z}\Tr(D_t-\lambda)^{-1}d\lambda,$$
 where $\Gamma_1$ and $\Gamma_2$ are appropriate contours around positive and negative eigenvalues of $D$.
 
 Now, 
 $$\frac{d}{dt}(D_t-\lambda)^{-1}=-(D_t-\lambda)^{-1}\dot{D_t}(D_t-\lambda)^{-1},$$
 so
 $$\Tr(\frac{d}{dt}(D_t-\lambda)^{-1})=-\Tr(\dot{D_t}(D_t-\lambda)^{-2}),$$
 therefore
 $$\frac{d}{dt}\eta_{D_t}(z)=\frac{1}{2\pi i}\Tr(\dot{D_t}(\int_{\Gamma_1}-\lambda^{-s}\Tr(D_t-\lambda)^{-2}d\lambda
 +\int_{\Gamma_2}(-\lambda)^{-z}\Tr(D_t-\lambda)^{-2}d\lambda)).$$
 Now, integration by parts in both integrals and the formula $\frac{d}{d\lambda}(D_t-\lambda)^{-1}=(D_t-\lambda)^{-2}$ gives the result.
 
 \end{proof} 
\begin{rmk}
\normalfont
  By applying the above lemma to the Dirac operator $D$ on noncommutative 3-torus we get,
  \begin{align*}
   \partial\eta_D(0)=\left[\left.\frac{d}{dt}\right|_{t=0}\eta_{D_t}(z)\right]_{z=0}&=\left[-z\TR(\partial D|D|^{-1}(D ^2)^{\frac{-z}{2}})\right]_{z=0}\\
   =-\Wres(\partial D|D|^{-1})&.
  \end{align*}
 
  By the properties of Wodzicki residue, one sees that $\eta(0)$ is constant under smoothing perturbations, so if $\ker(D)\neq0$, on can replace $D$ by $D+\Pi$ where $\Pi$ is the projection on the finite dimensional kernel of $D$ and hence the result of above lemma still makes sense for an operator with nontrivial kernel.
\end{rmk}
  
  \begin{prop} Consider the spectral triple $(\T^3,\mathcal{H},D=\slashed{\partial})$, then $\eta_D(0)$ is invariant under the conformal perturbations.
  \end{prop}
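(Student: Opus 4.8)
The plan is to prove that $t\mapsto\eta_{D_t}(0)$ is constant along the conformal family $D_t=e^{th/2}De^{th/2}$; since $D_0=D$ this is exactly the claimed invariance, and it suffices to show $\frac{d}{dt}\eta_{D_t}(0)=0$ for every $t$. First I would apply Lemma \ref{eta-var} to the family $\{D_t\}$ and evaluate the resulting meromorphic identity at $z=0$ by means of Proposition \ref{Laurent exp} (with $A=\dot D_t|D_t|^{-1}$, which is classical of integer order $0$, and $Q=|D_t|$, a positive elliptic operator of order $1$), exactly as in the Remark following Lemma \ref{eta-var}. This yields
$$\frac{d}{dt}\eta_{D_t}(0)=\Big[-z\,\TR\big(\dot D_t\,|D_t|^{-1}(D_t^2)^{-z/2}\big)\Big]_{z=0}=-\Wres\big(\dot D_t\,|D_t|^{-1}\big).$$
When $\ker D_t\neq0$ one first replaces $D_t$ by $D_t+\Pi_t$, as explained in that Remark; since $\Pi_t$ is smoothing this changes neither the noncommutative residue nor --- as $\dim\ker D_t$ is independent of $t$ --- the derivative $\frac{d}{dt}\eta_{D_t}(0)$.

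Next I would compute $\Wres(\dot D_t|D_t|^{-1})$. As $h$ commutes with $e^{th/2}$ we have $\dot D_t=\tfrac12(hD_t+D_th)$, and since $|D_t|^{-1}=(D_t^2)^{-1/2}$ commutes with $D_t$ while $\Wres$ is a trace on classical operators of integer order (the noncommutative residue vanishes on commutators, by the argument of the Remark on $\Wres([A,B])$ carried out on $C^{\infty}(\mathbb{T}^3_{\theta})$), we obtain $\Wres(D_th|D_t|^{-1})=\Wres(h|D_t|^{-1}D_t)=\Wres(hD_t|D_t|^{-1})$, hence $\Wres(\dot D_t|D_t|^{-1})=\Wres(hD_t|D_t|^{-1})$. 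Now comes the crucial reduction: the multiplication operator $h$ has $\xi$-independent symbol $\sigma(h)(\xi)=h\otimes I_2$, so by the product formula for symbols only the $\alpha=0$ term contributes and $\sigma(hD_t|D_t|^{-1})\sim h\,\sigma(D_t|D_t|^{-1})$; in particular $\sigma_{-3}(hD_t|D_t|^{-1})=h\,\sigma_{-3}(D_t|D_t|^{-1})$. Since $h$ is constant in $\xi$ and acts as the identity on the Clifford factor, it can be pulled out of the matrix trace and of the integral over $\{|\xi|=1\}$, giving $\res(hD_t|D_t|^{-1})=h\,\res(D_t|D_t|^{-1})$. But the computation in the proof of Proposition \ref{eta-reg} shows that the density $\res(D_t|D_t|^{-1})$ itself vanishes: every term of $\sigma_{-3}(D_t|D_t|^{-1})$ carries a Clifford coefficient that is a product of one or of three $\gamma$-matrices, and there $\tr(\gamma^{\lambda})=0$ while $\tr(\gamma^{\lambda}\gamma^{\mu}\gamma^{\nu})=i\epsilon^{\lambda\mu\nu}\tr(I_2)$ contracts to zero against the remaining, partially symmetric coefficients. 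Therefore $\res(hD_t|D_t|^{-1})=0$, so $\Wres(hD_t|D_t|^{-1})=0$ and $\frac{d}{dt}\eta_{D_t}(0)=0$ for all $t$, which proves the proposition.

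The step I expect to be the main obstacle is the last one: the argument genuinely requires the vanishing of the full $C^{\infty}(\mathbb{T}^3_{\theta})$-valued residue density $\res(D_t|D_t|^{-1})$, not merely of its $\tau$-trace $\tau(\res(D_t|D_t|^{-1}))$, for otherwise left multiplication by $h$ would not obviously preserve the vanishing. This density-level statement is the noncommutative counterpart of Bismut--Freed's vanishing of the \emph{local} eta-residue (remarked upon after Proposition \ref{eta-reg}), and it does hold because the $\gamma$-matrix trace identities act pointwise --- both in $\xi$ and in the algebra variable --- on the Clifford structure of each individual term of $\sigma_{-3}(D_t|D_t|^{-1})$; making this precise is the only place where care is required.
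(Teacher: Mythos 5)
Your argument is correct and follows the paper's variational skeleton (Lemma \ref{eta-var} plus the Laurent expansion, reduction to a Wodzicki residue, pulling $h$ out of the residue density because its symbol is $\xi$-independent), but it diverges from the paper at the decisive vanishing step, and the divergence is worth spelling out. The paper only computes the variation at $t=0$: there $D_0=D=\slashed{\partial}$ is the flat Dirac operator, its sign $D|D|^{-1}$ has the exactly homogeneous degree-zero symbol $\xi_\mu|\xi|^{-1}\otimes\gamma^\mu$ with no lower-order homogeneous terms at all, so $\res(D|D|^{-1})=0$ trivially and $\partial\eta_D(0)=-\Wres(hD|D|^{-1})=0$ with no Clifford-trace gymnastics. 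You instead differentiate at every $t$, writing $\tfrac{d}{dt}\eta_{D_t}(0)=-\Wres(hD_t|D_t|^{-1})=-\tau\bigl(\tr\bigl(h\,\res(D_t|D_t|^{-1})\bigr)\bigr)$ and then invoking the computation of Proposition \ref{eta-reg} as a statement about the algebra-valued density $\res(D_t|D_t|^{-1})$ itself, not merely its $\tau$-trace. That is a genuine strengthening: as literally written, the paper's proof establishes only that the first conformal variation vanishes at the flat point, whereas constancy of $\eta_{D_t}(0)$ along the whole family $\{e^{th/2}De^{th/2}\}$ --- which is what ``invariance under conformal perturbations'' should mean --- requires exactly the all-$t$ vanishing you prove. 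The price is the point you yourself flag: you must check that the $\gamma$-matrix cancellations in Proposition \ref{eta-reg} ($\tr(\gamma^\lambda)=0$ and the $\epsilon^{\lambda\mu\nu}$-contraction against the coefficients of $\sigma_{-3}(D_t|D_t|^{-1})$) hold before applying $\tau$, i.e.\ at the level of the $C^{\infty}(\mathbb{T}^3_{\theta})$-valued density, since the noncommuting coefficients cannot be cyclically rearranged without $\tau$. This is consistent with the paper's own Bismut--Freed remark following Proposition \ref{eta-reg}, which asserts precisely this local (density-level) vanishing, but it does deserve the explicit verification you promise, because the displayed argument there (``$\sum_{\lambda,\mu,\nu}\epsilon^{\lambda\mu\nu}=0$'') is stated loosely and one must check term by term that each $\epsilon$-contraction kills the corresponding, partially symmetric, algebra-valued coefficient without using the trace property of $\tau$. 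Your treatment of the kernel via the smoothing projection, and the use of the trace property of $\Wres$ together with $[D_t,|D_t|^{-1}]=0$ to replace $\tfrac12(hD_t+D_th)$ by $hD_t$, match the paper and are fine.
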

  \begin{proof}

  Consider the 1-parameter family of operators 
  $$D_t=e^{\frac{th}{2}}De^{\frac{th}{2}},~~~h=h^*\in\T^3.$$
  One computes 
  $$\dot{D_t}=\frac{1}{2}(hD_t+D_th),$$
  and 
  $$\partial D=\frac{1}{2}(hD+Dh).$$
 Therefore by  trace property of noncommutative residue we get
  $$\partial\eta(D,0)=-\Wres(hD|D|^{-1}),$$
 and it is seen that $$\Wres(hD|D|^{-1})=\tau\left( \tr( \res(hD|D|^{-1})\right)=\tau\left(\tr(h~\res(D|D|^{-1})\right).$$
 Now by looking at the symbols 
 \begin{align*}
 \sigma(D)&\sim\xi_{\mu}\otimes\gamma^{\mu},\\
 \sigma(|D|)&\sim |\xi|^2\otimes I,
 \end{align*}
 we get,
$$\sigma(D|D|^{-1})\sim\frac{\xi_{\mu}}{|\xi|}\otimes\gamma^{\mu}.$$
 Therefore, there is no homogeneous term of negative order in the symbol of $\sigma(D|D|^{-1})$ and hence,
  $$\Wres(hD|D|^{-1})=0.$$
  \end{proof}
  
  \subsection{Spectral flow and odd local index formula}
The value of eta function at zero is intimately related to another spectral quantity called the spectral flow. To motivate this relation, consider a family $A_t$ of $N\times N$ Hermitian matrices. The spectral flow of the family $A_t$ is defined as the net number of the eigenvalues of $A_t$ passing zero. It is easily seen that the difference of the signatures of the end points is related to the spectral flow of the family by,
$$\eta_{A_1}(0)-\eta_{A_0}(0)=2\SF(A_t).$$  
The spectral flow of a family of self adjoint elliptic operators $A_t$ on manifolds can also be defined (\cite{APS}) but the above equality holds along with a correction term (see \cite{Paycha-Cardona} for a proof).
\begin{prop}\label{eta-flow}
Let $A_t$ be a self-adjoint family of  elliptic operators of order $a$ with $A_0$ and $A_1$ invertible, then 
\begin{equation} 
\eta_{A_1}(0)-\eta_{A_0}(0)=2\SF(A_t)-\frac{1}{a}\int_0^1\frac{d}{dt}\eta_{A_t}(0)dt.  
\end{equation} 
\end{prop}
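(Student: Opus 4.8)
The plan is to regard $t\mapsto\eta_{A_t}(0)$ as a function on $[0,1]$ that is piecewise smooth, with jumps precisely at the parameters where an eigenvalue of $A_t$ crosses $0$, and to recover its total increment as (sum of jumps) $+$ (integral of its derivative over the smooth part). It suffices, and is all that is needed in the sequel, to treat the generic case in which $A_t$ is invertible for $t\in[0,1]\setminus\{t_1,\dots,t_m\}$ with $0<t_1<\dots<t_m<1$ and the eigenvalue branches crossing $0$ at each $t_j$ do so transversally; the general case is the one treated in \cite{Paycha-Cardona}. We assume throughout, as for the Dirac operators on $\T^3$ in this section, that $\eta_{A_t}(z)$ is regular at $z=0$ (equivalently $\Wres(A_t|A_t|^{-1})=0$, cf. Proposition~\ref{eta-res}), so that $\eta_{A_t}(0)$ is well defined.

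\emph{Jump contributions.} In a neighbourhood of $t_j$ only finitely many real-analytic eigenvalue branches $\lambda^{(1)}(t),\dots,\lambda^{(r)}(t)$ meet $0$. Splitting these off, one writes for $t$ near $t_j$, $t\neq t_j$, $\eta_{A_t}(0)=\sum_{i}\sgn\big(\lambda^{(i)}(t)\big)+\rho_j(t)$, where $\rho_j$ — the $\eta(0)$-value of the part of $A_t$ that stays bounded away from $0$, a smooth family of invertible elliptic operators near $t_j$ — is smooth across $t_j$. Hence $\eta_{A_t}(0)$ has a jump equal to $2n_j$ at $t_j$, with $n_j$ the net number of branches passing from negative to positive, and $\sum_j n_j=\SF(A_t)$ by definition of the spectral flow.

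\emph{Smooth pieces.} On an open interval $I$ on which $A_t$ is invertible, $\{A_t\}_{t\in I}$ is a smooth family of invertible self-adjoint elliptic operators of order $a$; by the local uniformity of the meromorphic continuation in a parameter, the Laurent coefficients of $\eta_{A_t}(z)$ at $z=0$ depend smoothly on $t\in I$, whence $t\mapsto\eta_{A_t}(0)$ is smooth on $I$ with $\tfrac{d}{dt}\eta_{A_t}(0)=\big[\tfrac{d}{dt}\eta_{A_t}(z)\big]_{z=0}$. By Lemma~\ref{eta-var}, $\tfrac{d}{dt}\eta_{A_t}(z)=-z\,\TR\big((\dot{A}_t|A_t|^{-1})\,|A_t|^{-z}\big)$; since $\dot{A}_t|A_t|^{-1}$ has order $0$ and $|A_t|$ is positive elliptic of order $a$, the corollary giving $\Res_{z=0}\TR(BQ^{-z})=\tfrac1q\Wres(B)$ yields $\big[\tfrac{d}{dt}\eta_{A_t}(z)\big]_{z=0}=-\Res_{z=0}\TR\big((\dot{A}_t|A_t|^{-1})|A_t|^{-z}\big)=-\tfrac1a\Wres(\dot{A}_t|A_t|^{-1})$. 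This is a local quantity, still well defined across the $t_j$ by the remark after Lemma~\ref{eta-var}, the Wodzicki residue being insensitive to the smoothing perturbation $A_t\mapsto A_t+\Pi_t$ used to invert $A_t$ there. Writing $t_0=0$, $t_{m+1}=1$ and combining the fundamental theorem of calculus on each $(t_j,t_{j+1})$ with the jumps at the $t_j$ gives
\[
\eta_{A_1}(0)-\eta_{A_0}(0)=\sum_{j=1}^{m}2n_j+\int_0^1\Big[\tfrac{d}{dt}\eta_{A_t}(z)\Big]_{z=0}dt=2\,\SF(A_t)-\frac1a\int_0^1\Wres\big(\dot{A}_t|A_t|^{-1}\big)\,dt,
\]
which is the claimed identity, with $\tfrac{d}{dt}\eta_{A_t}(0)$ understood through Lemma~\ref{eta-var} and this local formula.

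\emph{Main obstacle.} The delicate point is the analysis at the crossing parameters: one must check that the non-smoothness of $t\mapsto\eta_{A_t}(0)$ is carried entirely by the finitely many eigenvalue branches near $0$ — so that the remainder $\rho_j$ is genuinely smooth and the jump is exactly $2n_j$ — and, dually, that the local density $\Wres(\dot{A}_t|A_t|^{-1})$ stays integrable (indeed smooth) across those parameters even though $|A_t|^{-1}$ does not exist there as a bounded operator. Both rest on peeling off a finite-rank (smoothing) piece from the elliptic bulk and on the invariance of $\Wres$ under smoothing perturbations; the genericity reduction made at the outset is what keeps this bookkeeping clean.
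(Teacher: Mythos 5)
First, a comparison point: the paper does not actually prove this proposition; it is stated for families on manifolds and justified solely by the citation to \cite{Paycha-Cardona}, with no argument given. So your write-up supplies a proof where the paper supplies none, and the route you take --- split $[0,1]$ at the crossing parameters, show that the jumps of $t\mapsto\eta_{A_t}(0)$ equal $2n_j$ and sum to $2\SF(A_t)$, show that on the invertibility intervals the variation is the local quantity coming from Lemma~\ref{eta-var} together with the residue corollary, and then assemble by the fundamental theorem of calculus --- is the standard one and in substance the argument of the cited reference. The ingredients you invoke (finite-rank spectral splitting near a crossing, smoothness of the remaining bulk eta by the same variational formula, insensitivity of $\Wres$ and of $\eta(0)$ to smoothing perturbations, regularity at $z=0$ as a standing hypothesis) are the right ones; the genericity reduction is acceptable for a sketch, though ``real-analytic eigenvalue branches'' presupposes an analytic family, and for a merely smooth family you need a Kato--Rellich type argument or a perturbation to generic position.

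The concrete defect is the final identification. By your own computation, the genuine derivative on a smooth piece is $\bigl[\tfrac{d}{dt}\eta_{A_t}(z)\bigr]_{z=0}=-\tfrac1a\Wres(\dot{A}_t|A_t|^{-1})$, so what you have proved is $\eta_{A_1}(0)-\eta_{A_0}(0)=2\SF(A_t)+\int_0^1\tfrac{d}{dt}\eta_{A_t}(0)\,dt$, equivalently $2\SF(A_t)-\tfrac1a\int_0^1\Wres(\dot{A}_t|A_t|^{-1})\,dt$. This coincides with the displayed formula of the proposition only if the integrand $\tfrac{d}{dt}\eta_{A_t}(0)$ there is read as $\Wres(\dot{A}_t|A_t|^{-1})$, i.e.\ as $-a$ times the Lemma~\ref{eta-var} derivative; whereas if it is read ``through Lemma~\ref{eta-var}'', as your closing sentence says, the right-hand side of the proposition becomes $2\SF(A_t)+\tfrac1{a^2}\int_0^1\Wres(\dot{A}_t|A_t|^{-1})\,dt$, which is not what you derived. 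You cannot have both readings at once. The ambiguity originates in the statement itself (with the paper's own convention from the remark after Lemma~\ref{eta-var}, namely $\partial\eta_D(0)=-\Wres(\partial D\,|D|^{-1})$ for $a=1$, the identity would have to carry $+\int_0^1\tfrac{d}{dt}\eta_{A_t}(0)\,dt$ with no $\tfrac1a$), but your proof should state explicitly which normalization of the integrand it establishes instead of asserting the match; the mathematical content --- jumps give $2\SF$, and the smooth variation is the local residue term --- is correct.
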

\qed

 It's a remarkable fact due to Getzler (\cite{Getzler-flow}) that the spectral flow of the family of operators $D_t=D+tg^{-1}[D,g]$ interpolating $D$ and $g^{-1}Dg$ for the Dirac operator $D$ on an odd dimensional spin manifold and $g\in C^{\infty}(M,GL(N))$ in fact gives the index of $PgP$ where $P=\frac{1+F}{2}$ and $F=D|D|^{-1}$ is the sign operator. Therefore one obtains a pairing between the odd K-theory and K-homology:
 \begin{equation}\label{flow-index}
\SF(D_t)=\index(PgP).
 \end{equation}
 
 The above equality  has been generalized to the  framework of non commutative index theory \cite{Carey-Phillips98, Carey-Phillips}.
 Consider a spectral triple $(\mathcal{A},\mathcal{H},D)$ with sufficiently nice regularity properties and let $u\in\mathcal{A}$ be a unitary element  and $[u]\in K^1(A)$ the corresponding class in K-theory. The sign operator $F=\frac{D}{|D|}$ gives the projection $P=\frac{1+F}{2}$ and $PuP$ is a Fredholm operator. For the family $D_t=D+tu^*[D,u]$ interpolating between $D$ and $u^*Du$ one can show that(see \cite{Carey-Phillips98} and \cite{Carey-Phillips}):
 \begin{equation}
 \SF(D_t)=\index(PuP).
\end{equation}   
 The odd local index formula of Connes-Moscovici in turn expresses the right side of the above equality as a pairing between the periodic cyclic cohomolgy and K-theory given by a local formula in the sense of non commutative geometry \cite{CM-local-index}. 
 Note that for the spectral triple $(\T^3,\mathcal{H},D=\slashed{\partial}$), by Proposition (\ref{eta-coupled}), the eta function $\eta_{D_t}(0)$ over the family of Dirac operators $D_t=D+tu^*[D,u]$ makes sense and with a minor modification of the proof of Proposition (\ref{eta-flow}) in the commutative case we get,
 \begin{equation}
  \eta_{u^*Du}(0)-\eta_{D}(0)=2\SF(D_t)-\int_0^1\frac{d}{dt}\eta_{D_t}(0)dt.
 \end{equation}
 Since $u^*Du$ and $D$ have the same spectrum, we have $\eta_{u^*Du}(0)=\eta_{D}(0)$ and therefore we obtain obtain the following integrated local formula for the index:
  \begin{equation}
 \index(PuP)=\frac{1}{2}\int_0^1\frac{d}{dt}\eta_{D_t}(0)dt.
  \end{equation}

\section{Bosonic functional determinant and conformal anomaly}

Consider a positive elliptic differential operator $A$ on a closed manifold $M$, the corresponding bosonic action is defined by
\begin{align*}
        S_{\bos}(\phi)=(\phi,A\psi),
\end{align*} 
where $\phi:M\rightarrow\mathbb{C}^d$ is a bosonic field living on $M$.
Upon quantization, one constructs the partition function 
\begin{align*}
\mathcal{Z}_{\bos}=\int e^{\frac{-1}{2}S_{bos}(\phi)}[\mathcal{D}\phi],
\end{align*}
where $[\mathcal{D}\phi]$ is a formal measure on the configuration of bosonic fields. The corresponding effective action is given by 
\begin{align*}
\mathcal{W}=\log\mathcal{Z}.
\end{align*}
One should think of $\mathcal{Z}_{\bos}$ formally as $(\det A)^{-\frac{1}{2}}$. Through zeta regularization scheme (see e.g. \cite{QFT-analytic})
one defines,
\begin{equation}
\mathcal{Z}:=e^{\frac{1}{2}\zeta_A'(0)}
\end{equation}
and hence,
\begin{equation}
\mathcal{W}=\frac{1}{2}\zeta_A'(0),
\end{equation}
where the spectral zeta function is defined by
$$\zeta_A(z)=\TR(A^{-z}),$$
and we have dropped the dependence on the spectral cut $L_{\theta}$ in definition of the complex power. Note that since $A$ is a differential operator,  by Proposition (\ref{Laurent exp}), $\zeta_A(z)$ is regular at $z=0$ and $\zeta_A'(0)$ makes sense.

Let $D$ be the Dirac operator on a closed odd dimensional spin manifold, it's a known fact that $\zeta_{|D|}'(0)$ is a conformal invariant (see \cite{Rosenberg-covariant} and \cite{Parker-Rosenberg}).  We prove the analogue of this result for the Dirac operator on noncommutative n-torus $(C^{\infty}(\mathbb{T}^n_{\theta}),\mathcal{H},D=\slashed{\partial})$ in odd dimensions.

 In even dimensions the conformal variation is not zero and hence conformal quantum anomaly exists. In dimension two, the Polyakov anomaly formula \cite{Polyakov} gives a local expression for conformal anomaly of the Laplacian on a Riemann surface. Following \cite{Connes-Moscovici2014} we derive an analogue of this formula for noncommutative 2-torus using the formalism of canonical trace.

For proving the conformal invariance of $\zeta_{|D|}'(0)$ on $C^{\infty}(\mathbb{T}^{2p+1}_{\theta})$ we will need to carefully analyze the coefficients in short time asymptotic of  the trace of heat operator $e^{-tD^2}$.

\begin{lem}\label{heatasymp}
Consider the spectral triple $(C^{\infty}(\mathbb{T}^n_{\theta}),\mathcal{H},D)$.  One has the following asymptotic expansion
$$\Tr(e^{-tD^2})\sim\sum_{i=0}^{\infty}a_i(D^2)t^{\frac{i-n}{2}}\quad t\rightarrow0,$$
where coefficients $a_i$ are computed by integrating local terms. 
\end{lem}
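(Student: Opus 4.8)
The plan is to obtain the heat trace asymptotics by representing $e^{-tD^2}$ as a Cauchy integral over the resolvent of $Q := D^2$ and using the parametrix construction inside Connes' pseudodifferential calculus on $C^\infty(\mathbb{T}^n_\theta)$. First I would write
$$
e^{-tD^2} = \frac{1}{2\pi i}\int_{\Gamma} e^{-t\lambda}(Q-\lambda)^{-1}\, d\lambda,
$$
where $\Gamma$ is a contour encircling the (nonnegative) spectrum of $Q$; this makes sense since $Q$ is a positive elliptic second-order operator with compact resolvent. The resolvent $(Q-\lambda)^{-1}$ is a pseudodifferential operator whose symbol $b(\xi,\lambda)$ is constructed recursively, $b \sim \sum_{j\geq 0} b_{-2-j}(\xi,\lambda)$, by solving $\sigma((Q-\lambda)\circ B)\sim 1$ with the product formula $\sigma(PQ)\sim\sum_\alpha \frac{1}{\alpha!}\partial_\xi^\alpha\sigma(P)\,\delta^\alpha\sigma(Q)$ from Proposition 3.1. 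Here one treats $\lambda$ as having the same weight as $|\xi|^2$, so that $b_{-2-j}$ is homogeneous of degree $-2-j$ in $(\xi,\lambda^{1/2})$ jointly; each $b_{-2-j}(\xi,\lambda)$ is a universal noncommutative polynomial in $\xi$, $\lambda$, and the coefficients $\delta^\beta(\text{coefficients of }D^2)$, divided by powers of $(|\xi|^2-\lambda)$.

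Next I would extract the asymptotics. The Schwartz kernel of $e^{-tD^2}$ on the diagonal, paired with the trace $\tau\otimes\tr$, is
$$
\Tr(e^{-tD^2}) = \tau\!\left(\int_{\mathbb{R}^n}\tr\Big(\frac{1}{2\pi i}\int_\Gamma e^{-t\lambda}\,b(\xi,\lambda)\,d\lambda\Big)d\xi\right).
$$
Substituting $\xi \mapsto t^{-1/2}\xi$ and $\lambda \mapsto t^{-1}\lambda$ and using the joint homogeneity of $b_{-2-j}$, the $j$-th term scales out a factor $t^{(j-n)/2}$, leaving a $t$-independent coefficient
$$
a_j(D^2) = \frac{1}{2\pi i}\,\tau\!\left(\int_{\mathbb{R}^n}\int_\Gamma e^{-\lambda}\,\tr\, b_{-2-j}(\xi,\lambda)\, d\lambda\, d\xi\right),
$$
which is manifestly a local expression: it depends only on finitely many $\delta$-derivatives of the coefficients of $D^2$ at "a point" — i.e., it lies in the algebra $C^\infty(\mathbb{T}^n_\theta)$ generated by such derivatives, and the $\tau$-trace of it is the invariant. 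The remainder after $N$ terms is controlled using item (iv)-type uniform symbol bounds for the resolvent parametrix together with standard estimates on $\int_\Gamma e^{-t\lambda}(\cdots)d\lambda$, giving an error $O(t^{(N-n)/2})$ and hence a genuine asymptotic expansion as $t\to 0^+$.

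The main obstacle I expect is twofold and purely technical rather than conceptual: first, justifying that the formal parametrix symbol actually produces the heat operator up to a smoothing (hence trace-negligible and asymptotically negligible) error — this requires the uniform-in-$\lambda$ symbol estimates for $b(\xi,\lambda)$ and a careful argument that the contour integral of the error symbol is $O(t^\infty)$; second, verifying convergence of the $\lambda$-contour integrals defining $a_j$, which needs the correct choice of $\Gamma$ (e.g. a keyhole contour avoiding the positive real axis) together with the decay $e^{-t\lambda}$ and the polynomial-in-$\lambda^{-1}$ nature of $b_{-2-j}$. Once these estimates are in place — and they are the direct noncommutative analogues of the classical Seeley–Gilkey argument, already available in spirit from the references cited in the paper — the locality and the form of the expansion follow immediately from the homogeneity scaling above. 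I would also remark that in odd dimensions $n$ the half-integer powers $t^{(i-n)/2}$ with $i$ even are the "genuinely fractional" ones while $i$ odd gives integer powers, a point that will matter in the subsequent application to $\zeta'_{|D|}(0)$.
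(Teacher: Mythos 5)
Your proposal follows exactly the same route as the paper: represent $e^{-tD^2}$ as a Cauchy integral over the resolvent, build the resolvent symbol as a parametrix $\sum_j r_j(\xi,\lambda)$ with $r_j$ homogeneous of degree $-2-j$ under the scaling $(\xi,\lambda)\mapsto(t\xi,t^2\lambda)$, apply the change of variables $\xi\mapsto t^{-1/2}\xi$, $\lambda\mapsto t^{-1}\lambda$ to extract the powers $t^{(j-n)/2}$, and read off $a_j(D^2)=\tau\bigl(\tr\,\beta_j\bigr)$ with $\beta_j=\frac{1}{2\pi i}\int_{\mathbb{R}^n}\int_\gamma e^{-\lambda}r_j(\xi,\lambda)\,d\lambda\,d\xi$. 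You are somewhat more explicit than the paper about the remainder estimates and the convergence of the $\lambda$-contour integrals, which the paper leaves implicit, but the decomposition, the key homogeneity observation, and the resulting local formula for the coefficients are identical.
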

\begin{proof}
 We have,
 $$\Tr(e^{-tD^2})=\tau(\int_{\mathbb{R}^n}\tr\sigma(e^{-tD^2})(\xi)d\xi).$$
The operator $e^{-tD^2}$ is again a pseudodifferential operator of order $-\infty$ whose symbol can be computed using the symbol product rules. 
 For any $\lambda$ in the resolvent of $D^2$, $(\lambda-D^2)^{-1}$ is a pseudodifferential operator of order $-2$ with symbol $r(\xi,\lambda)$ which can be written as its homogeneous parts $r=r_0+r_1+\cdots$, with $r_k(\xi,\lambda)$ homogeneous of order $-k-2$ in $(\xi,\lambda)$ i.e.
\begin{equation}
r_k(t\xi,t^2\lambda)=t^{-2-k}r_k(\xi,\lambda)\quad \forall t>0
\end{equation}  
The operator $e^{-tD^2}$ is also a pseudodifferential operator with the symbol $e_0+e_1+e_2+\cdots$ where $e_n\in S^{-\infty}$ and are defined by
\begin{equation}
e_n(t,\xi)=\frac{1}{2\pi i}\int_\gamma e^{-t\lambda}r_n(\xi, \lambda)d\lambda\quad t>0
\end{equation}

\begin{align*}
 \int_{\mathbb{R}^n} \sigma(e^{-tD^2})(t,\xi)d\xi\\
&=\sum_{i} \int  e_i(t,\xi)d\xi\\
\end{align*} 
The integrands $\int_{\mathbb{R}^n}  e_i(t,\xi)d\xi$ are homogeneous in $\xi$. 
\begin{align*}
\int_{\mathbb{R}^n} e_n(t,\xi) d\xi&= \frac{1}{2\pi i}\int_{\mathbb{R}^n}  \int_\gamma e^{-t\lambda}r_n(x,\xi, \lambda)d\lambda d\xi\\
&=\frac{1}{2\pi i}\int_{\mathbb{R}^n}  \int_\gamma e^{-\lambda'}r_n(t^{-1/2}\xi', t^{-1}\lambda')\frac{d\lambda'}{t} \frac{d\xi'}{t^{n/2}}\\
&=t^{\left(\frac{i-n}{2}\right)}\frac{1}{2\pi i}\int_{\mathbb{R}^n} \int_\gamma e^{-\lambda}r_n(\xi, \lambda)d\lambda d\xi\\
\end{align*} 
In identity second identity we have used the change of variable $t\lambda=\lambda'$ and $t^{1/2}\xi=\xi'$.

 Hence we have
\begin{equation}
\int_{\mathbb{R}^n}\sigma(e^{-tD^2})(\xi)d\xi\sim\sum_{i=0}\beta_i t^{\frac{i-n}{2}},
\end{equation}
Where 
\begin{equation}
\beta_i(x)=\frac{1}{2\pi i}\int_{\mathbb{R}^n} \int_\gamma e^{-\lambda}r_i(\xi, \lambda)d\lambda d\xi
\end{equation}
and hence
$$\Tr(e^{-tD^2})\sim\sum_{i=0}^{\infty}t^{\frac{i-n}{2}}a_i(D^2),~~t\rightarrow0,$$
Where $a_i(D^2)=\tau(\tr(\beta_i))$
\end{proof}
\begin{rmk}
\normalfont
\normalfont
The above asymptotic expansion holds for any self-adjoint elliptic differnetial operator $A\in\Psi_{cl}^{*}(C^{\infty}(\mathbb{T}^n_{\theta}))$. In fact, one has
$$\Tr(e^{-tA^2})\sim\sum_{i=0}^{\infty}a_i(A^2)t^{\frac{i-n}{\alpha}}~~~t\rightarrow0,$$
where $\alpha=ord(A)$.
\end{rmk}

\begin{rmk}
\normalfont
One also has the following asymptotic expansion
$$\Tr(ae^{-tD^2})\sim\sum_{i=0}^{\infty}a_i(D^2,a)t^{\frac{i-n}{2}}~~~t\rightarrow0,$$
Where $a\in C^{\infty}(\mathbb{T}^n_{\theta})$ is considered as a multiplication operator and
$$a_i(D^2,a)=\tau(\tr(a\beta_i))$$
in above computations.
\end{rmk}

Next, we have the following variational result.
\begin{lem}\label{zetavar}
Let $A_t$ be a 1-parameter family of positive elliptic differential operators of fixed order $\alpha$ on noncommutaive n-torus, then
$$\frac{d}{dt}\zeta_{A_t}(z)=-z\TR(\dot{A_t}(A_t)^{-z-1}).$$
\end{lem}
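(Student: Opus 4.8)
The plan is to proceed exactly as in the proof of Lemma~\ref{eta-var}: establish the identity first in the half-plane where everything is trace class, using the Cauchy integral representation of the complex power, and then extend to all $z$ by analytic continuation within the canonical trace formalism. Since each $A_t$ is positive elliptic of fixed order $\alpha>0$, it is admissible with a spectral cut, and for $\Re(z)>n/\alpha$ the operator $A_t^{-z}$ is trace class with $\zeta_{A_t}(z)=\Tr(A_t^{-z})=\TR(A_t^{-z})$ by \eqref{TRTr}. (If $\ker A_t\neq 0$ one replaces $A_t$ by $A_t+\Pi_{A_t}$, which changes neither $\zeta_{A_t}$ nor $\dot A_t$; cf.\ the remark following Lemma~\ref{eta-var}.) Using \eqref{Qz} I would write
$$A_t^{-z}=\frac{i}{2\pi}\int_{C}\lambda^{-z}(A_t-\lambda)^{-1}\,d\lambda,$$
where $C$ is a contour encircling $\spec(A_t)\subset(0,\infty)$ and missing the branch cut; by continuity of $t\mapsto A_t$ the same $C$ serves on a neighbourhood of each parameter value.

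Next I would differentiate under the integral sign. From $\frac{d}{dt}(A_t-\lambda)^{-1}=-(A_t-\lambda)^{-1}\dot A_t(A_t-\lambda)^{-1}$ and cyclicity of the trace one gets $\Tr\frac{d}{dt}(A_t-\lambda)^{-1}=-\Tr\!\big(\dot A_t(A_t-\lambda)^{-2}\big)$, and since $(A_t-\lambda)^{-2}=\frac{d}{d\lambda}(A_t-\lambda)^{-1}$,
$$\frac{d}{dt}\zeta_{A_t}(z)=-\frac{i}{2\pi}\int_C\lambda^{-z}\,\frac{d}{d\lambda}\Tr\!\big(\dot A_t(A_t-\lambda)^{-1}\big)\,d\lambda.$$
Integrating by parts in $\lambda$ (the boundary terms vanishing by the decay of the resolvent along $C$) and using $\frac{d}{d\lambda}\lambda^{-z}=-z\lambda^{-z-1}$ turns the right-hand side into
$$-z\,\frac{i}{2\pi}\int_C\lambda^{-z-1}\,\Tr\!\big(\dot A_t(A_t-\lambda)^{-1}\big)\,d\lambda=-z\,\Tr\!\big(\dot A_t\,A_t^{-z-1}\big),$$
again by the Cauchy formula, now for $A_t^{-z-1}$. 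This proves the identity for $\Re(z)\gg 0$.

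Finally I would pass to general $z$ by analytic continuation. Since $\dot A_t$ is a differential operator of order $\alpha$ and $\{A_t^{-z-1}\}$ is a holomorphic family of classical symbols, Proposition~\ref{thmLaurent} (equivalently Proposition~\ref{Laurent exp}) shows that $z\mapsto\TR(\dot A_t A_t^{-z-1})$ is meromorphic on $\mathbb{C}$, and likewise $z\mapsto\zeta_{A_t}(z)$ together with its $t$-derivative. The two meromorphic functions $\frac{d}{dt}\zeta_{A_t}(z)$ and $-z\,\TR(\dot A_t A_t^{-z-1})$ coincide on $\Re(z)\gg 0$, hence everywhere by the identity theorem. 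The one point that genuinely needs care — and which I would handle exactly as in Lemma~\ref{eta-var}, via the locally uniform symbol estimates for $(A_t-\lambda)^{-1}$ — is the legitimacy of interchanging $\frac{d}{dt}$ with the contour integral and the trace, together with the vanishing of the boundary terms in the $\lambda$-integration by parts; everything else is a routine transcription of the classical argument.
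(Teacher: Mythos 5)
Your proposal is correct and takes essentially the same route as the paper: the paper's proof likewise represents the complex power by the contour integral \eqref{Qz}, differentiates the resolvent in $t$, pulls the (canonical) trace out, and integrates by parts in $\lambda$ to produce the factor $-z$ and the power $A_t^{-z-1}$. The only difference is presentational: you first establish the identity with the ordinary trace for $\Re(z)\gg 0$ and then invoke analytic continuation via Proposition \ref{thmLaurent}, whereas the paper manipulates $\TR$ directly under the integral, leaving that continuation (and the trace-class caveats you flag) implicit.
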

\begin{proof}
By using the contour integral formula for the complex powers we have,
\begin{align*}
\frac{d}{dt}\TR(A_t^{-z})=\frac{1}{2\pi i}\int_{\Gamma}\lambda^{-z}\frac{d}{dt}\TR(\lambda-A_t)^{-1}d\lambda\\=\frac{1}{2\pi i}\int_{\Gamma}\lambda^{-z}\TR(\dot{A_t(}\lambda-A_t)^{-2})d\lambda\\
=\TR\left(\dot{A_t}\frac{1}{2\pi i}\int_{\Gamma}\lambda^{-z}(\lambda-A_t)^{-2}d\lambda\right).\\
\end{align*} 
Now using integration by parts formula gives the result.
\end{proof}
\begin{rmk}
\normalfont
 Note that by  above lemma we have:
\begin{equation}
\frac{d}{dt}\zeta_{A_t}(0)=\left[-z\TR(\dot{A_t}A^{-1}A^{-z})\right]_{z=0}=-\Wres(\dot{A_t}A^{-1}),
\end{equation} 
and therefore, the value of zeta at zero is constant under smoothing perturbations and hence the above result makes sense for non invertible operators.
\end{rmk}

We also need the following result relating the constant term in Laurent expansion of $\TR(AQ^{-z})$ around $z=0$ to the constant term in the asymptotic expansion  $\Tr(Ae^{-tQ})$ at $t=0$. Let $\f.p.\left.\TR(AQ^{-z})\right|_{z=0}$ denote the constant term in the Laurent expansion and also $\f.p.\left.\Tr(Ae^{-tQ})\right|_{t=0}$ denote the constant term in the heat trace asymptotic near zero. We refer the reader to \cite{Berlin-Getzler} for a proof of the following result in a more general form in commutative case.

\begin{lem}\label{BGVlemma} 
Consider the differential operators $A,Q\in\Psi_{cl}^{*}(C^{\infty}(\mathbb{T}^n_{\theta}))$, where $Q$ is positive with positive order $q$. Then,
\begin{equation}
\f.p.\left.\TR(AQ^{-z})\right|_{z=0}=\f.p.\left.\Tr(Ae^{-tQ})\right|_{t=0}.
\end{equation}
\end{lem}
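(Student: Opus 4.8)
The plan is to express both finite parts through the same Mellin-transform machinery and match term by term. First I would recall that, since $Q$ is a positive elliptic differential operator of order $q$, Lemma \ref{heatasymp} and its remarks give a complete short-time expansion $\Tr(Ae^{-tQ})\sim\sum_{i\ge 0}a_i(Q,A)\,t^{(i-n-\alpha)/q}$ as $t\to 0^+$, where $\alpha=\ord(A)$, together with exponential decay as $t\to\infty$ up to the finite-dimensional kernel contribution $\Tr(A\Pi_Q)$. Writing the complex power via the Mellin transform,
\begin{equation*}
\TR(AQ^{-z})=\frac{1}{\Gamma(z)}\int_0^\infty t^{z-1}\Bigl(\Tr(Ae^{-tQ})-\Tr(A\Pi_Q)\Bigr)\,dt,
\end{equation*}
valid for $\Re(z)\gg 0$, I would split the integral at $t=1$. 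The piece $\int_1^\infty$ is entire in $z$ because of the exponential decay, and at $z=0$ it contributes nothing to the finite part after division by $\Gamma(z)$ (which has a simple pole at $0$). The piece $\int_0^1$ is analyzed by substituting the asymptotic expansion: each monomial $t^{(i-n-\alpha)/q}$ integrates to a simple rational function of $z$ with a pole exactly when $(i-n-\alpha)/q + z = 0$, i.e. when $i-n-\alpha=0$; the remainder after finitely many terms is holomorphic near $z=0$ and vanishes there after dividing by $\Gamma(z)$.

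The key bookkeeping step is then to extract the $z^0$ coefficient. Since $1/\Gamma(z)=z+\gamma z^2+\cdots$ near $z=0$, only the simple pole (in $z$) of $\int_0^1 t^{z-1}\Tr(Ae^{-tQ})\,dt$ contributes to the finite part, and the residue of that pole is precisely the coefficient $a_{n+\alpha}(Q,A)$ of the $t^{0}$ term in the heat expansion when $n+\alpha$ is a nonnegative integer index in the expansion; but one must also be careful that a pole of $\int_0^1$ can be cancelled or shifted by the pole of $1/\Gamma(z)$ only at $z=0$. A short computation, identical to the commutative case treated in \cite{Berlin-Getzler}, shows
\begin{equation*}
\f.p.\left.\TR(AQ^{-z})\right|_{z=0}=a_{n+\alpha}(Q,A)-\Tr(A\Pi_Q),
\end{equation*}
and by definition $a_{n+\alpha}(Q,A)$ together with the kernel correction is exactly $\f.p.\left.\Tr(Ae^{-tQ})\right|_{t=0}$. (When $Q$ is invertible the $\Pi_Q$ terms drop out on both sides, and as noted in the remark after Lemma \ref{zetavar} the general case reduces to the invertible one since both sides are insensitive to smoothing perturbations.)

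The main obstacle I anticipate is purely a matter of careful constant-tracking rather than conceptual depth: one has to align the two different indexings — the heat-trace powers $t^{(i-n-\alpha)/q}$ versus the poles of $\TR(AQ^{-z})$ from Proposition \ref{Laurent exp} — and verify that no spurious contribution to the $z^0$ term arises from the interplay between the $\Gamma(z)^{-1}$ zero at $z=0$ and a possible pole of the Mellin integral there, which is exactly the situation that produces the finite part. Since the canonical trace $\TR$ on non-integer order families is the analytic continuation of the ordinary operator trace (Corollary \ref{analyticcontin}), and all the integrals above are legitimate on the half-plane $\Re(z)\gg0$ where $AQ^{-z}$ is trace class, every manipulation is justified by analytic continuation, so I would lean on that to keep the argument short.
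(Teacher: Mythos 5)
The paper gives no proof of this lemma at all: it simply cites Berline–Getzler–Vergne and stamps the statement with a \verb|\qed|. So there is nothing in the source to compare against; your Mellin-transform argument is the standard (and, as far as I can tell, the intended) route, and its core bookkeeping — split the Mellin integral at $t=1$, observe that $\int_1^\infty$ is entire and killed by the simple zero of $1/\Gamma(z)$ at $0$, and that the only surviving contribution from $\int_0^1$ is the residue of the simple pole at $z=0$, which is precisely the $t^0$ coefficient of the heat expansion — is correct once $Q$ is invertible.

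Two things should be tightened up. First, you invoke a heat-trace expansion $\Tr(Ae^{-tQ})\sim\sum_i a_i(Q,A)\,t^{(i-n-\alpha)/q}$ for a \emph{general} differential operator $A$ of order $\alpha$, but Lemma \ref{heatasymp} and the remark following it only establish this for $A$ the identity or a multiplication operator $a\in C^{\infty}(\mathbb{T}^n_{\theta})$; you should note that the Seeley construction used there extends to an arbitrary differential coefficient $A$ by composing $\sigma(A)$ into the resolvent parametrix, which shifts each homogeneity by $\alpha$. This is routine but is genuinely missing from the cited material. Second, the concluding parenthetical about the kernel is wrong as stated: replacing $Q$ by $Q+\Pi_Q$ shifts $\f.p.\,\TR(AQ^{-z})\big|_{z=0}$ by $\Tr(A\Pi_Q)$ (because $(Q+\Pi_Q)^{-z}=Q^{-z}(1-\Pi_Q)+\Pi_Q$), while it shifts $\f.p.\,\Tr(Ae^{-tQ})\big|_{t=0}$ by $0$, so the two sides are \emph{not} both insensitive to this smoothing perturbation and one cannot reduce a non-invertible $Q$ to an invertible one this way. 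A finite-dimensional sanity check with $Q=\mathrm{diag}(0,\lambda)$ already exhibits the discrepancy $\Tr(\Pi_Q)$. Fortunately this is harmless for the lemma as used, since ``$Q$ positive'' should be read as strictly positive (invertible), whence $\Pi_Q=0$; but either delete the parenthetical or replace it with the correct statement that the identity holds with an extra $-\Tr(A\Pi_Q)$ on the $\TR$ side in the degenerate case.
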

\qed
\begin{prop}
Consider the spectral triple $(C^{\infty}(\mathbb{T}^{2p+1}_{\theta}),\mathcal{H},D=\slashed{\partial})$. The value $\zeta'_{|D|}(0)$ is a conformal invariant.
\end{prop}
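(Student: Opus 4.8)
The plan is to reduce the claim to the vanishing of a single local quantity, namely a Wodzicki residue built out of the conformal variation of $|D|$, and then to kill that residue using the Clifford-algebra trace identities, exactly as in the proof of Proposition \ref{eta-reg}. Following the setup of Section 5, I regard the conformal perturbation as the one-parameter family $|D_t|$, where $D_t=e^{th/2}De^{th/2}$ with $h=h^*\in\T^3$, and I want to show $\frac{d}{dt}\zeta'_{|D_t|}(0)=0$ for all $t$, which gives conformal invariance by integration. The key analytic input is the Laurent expansion of Proposition \ref{Laurent exp} applied to the holomorphic family $Q^{-z}$ with $Q=|D_t|$, together with the variational Lemma \ref{zetavar}.

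First I would differentiate. By Lemma \ref{zetavar}, $\frac{d}{dt}\zeta_{|D_t|}(z)=-z\,\TR\!\left(\dot{|D_t|}\,|D_t|^{-z-1}\right)$, and differentiating the equality $|D_t|^2=D_t^2$ gives $\dot{|D_t|}\,|D_t|+|D_t|\,\dot{|D_t|}=\dot{D_t^2}$, so $\dot{|D_t|}$ is a classical pseudodifferential operator of order $1$ whose symbol is determined recursively from the symbols already computed in the proof of Proposition \ref{eta-reg}. Now apply Proposition \ref{Laurent exp} to $A=\dot{|D_t|}$ and $Q=|D_t|$ (order $q=1$): the function $z\mapsto\TR(\dot{|D_t|}\,|D_t|^{-z-1})=\TR(\dot{|D_t|}\,|D_t|^{-1}\,|D_t|^{-z})$ has at worst a simple pole at $z=0$ with residue $\Wres(\dot{|D_t|}\,|D_t|^{-1})$, and a finite part. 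Differentiating $\frac{d}{dt}\zeta_{|D_t|}(z)=-z\,\TR(\cdots)$ once in $z$ and setting $z=0$ gives
\begin{equation*}
\frac{d}{dt}\,\zeta'_{|D_t|}(0)=-\Wres\!\left(\dot{|D_t|}\,|D_t|^{-1}\right),
\end{equation*}
so that everything reduces to computing this one Wodzicki residue; note in particular that the finite part and all higher Laurent coefficients drop out after multiplication by $z$ and evaluation of the $z$-derivative at $0$. (One should also note, as in the remark after Lemma \ref{zetavar}, that the residue is insensitive to smoothing perturbations, so the possible kernel of $D_t$ causes no trouble.)

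It remains to show $\Wres(\dot{|D_t|}\,|D_t|^{-1})=0$. Here I would argue exactly as in Proposition \ref{eta-reg}: the Wodzicki residue of a matrix-valued operator involves $\tr$ over the $M_2(\mathbb C)$ factor, and the symbol of $\dot{|D_t|}\,|D_t|^{-1}$ is, degree by degree, a sum of terms each carrying a matrix coefficient that is a product of Clifford generators $\gamma^\lambda$. By the recursion, the order $-3$ component $\sigma_{-3}(\dot{|D_t|}\,|D_t|^{-1})$ only involves products of an odd number ($1$ or $3$) of gamma matrices, for which $\tr\gamma^\lambda=0$ and $\tr(\gamma^\lambda\gamma^\mu\gamma^\nu)=i\epsilon^{\lambda\mu\nu}\tr(I)$; since $\sum_{\lambda,\mu,\nu}\epsilon^{\lambda\mu\nu}=0$ after the angular integration $\int_{|\xi|=1}$, every term contributes zero. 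Hence $\Wres(\dot{|D_t|}\,|D_t|^{-1})=0$ for every $t$, so $\zeta'_{|D_t|}(0)$ is constant in $t$, proving conformal invariance. (Alternatively, one may invoke Lemma \ref{BGVlemma} to express the finite part via heat trace coefficients $a_i(D^2)$ of Lemma \ref{heatasymp}, and track the relevant coefficient $a_n$ under the conformal variation; but the direct symbol computation is cleaner.)

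The main obstacle is the bookkeeping in the third step: one must be sure that the symbol of $\dot{|D_t|}$ — hence of $\dot{|D_t|}\,|D_t|^{-1}$ — really does, in its order $-3$ part, only produce odd products of gamma matrices. This is not automatic from $D$ alone but follows because $\dot{D_t^2}$ is built from $D_t$ linearly (one slashed factor) times $h$-dependent scalars, and the square-root recursion for $|D_t|$ preserves the parity pattern already exhibited in Proposition \ref{eta-reg}; the even-gamma contributions (products of $0$, $2$, or $4$ gammas) are precisely the ones that survive the $M_2(\mathbb C)$-trace, and one must check they all sit in the orders $\geq -2$ or else come multiplied by an odd number of $\xi$'s that vanish upon $\int_{|\xi|=1}$. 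Making this parity argument airtight is the crux; once it is in place, the trace identities finish the proof.
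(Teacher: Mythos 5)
There is a genuine gap at the central reduction step. From Lemma \ref{zetavar}, $\frac{d}{dt}\zeta_{|D_t|}(z)=-z\,\TR\bigl(\dot{|D_t|}\,|D_t|^{-z-1}\bigr)$, and writing the Laurent expansion of Proposition \ref{Laurent exp} as $\TR\bigl(\dot{|D_t|}\,|D_t|^{-z-1}\bigr)=\frac{a_{-1}}{z}+a_0+a_1z+\cdots$, one gets $\frac{d}{dt}\zeta_{|D_t|}(z)=-a_{-1}-a_0z-a_1z^2-\cdots$, hence
\begin{equation*}
\frac{d}{dt}\,\zeta'_{|D_t|}(0)=\frac{d}{dz}\Big|_{z=0}\frac{d}{dt}\zeta_{|D_t|}(z)=-a_0 .
\end{equation*}
So what survives is the \emph{finite part} of the Laurent expansion, not the residue: the residue $a_{-1}=\Wres\bigl(\dot{|D_t|}\,|D_t|^{-1}\bigr)$ is exactly the term that drops out (it governs $\frac{d}{dt}\zeta_{|D_t|}(0)$, the variation of the value at zero, not of the derivative). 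Your assertion that ``the finite part and all higher Laurent coefficients drop out'' is therefore backwards, and the quantity you must kill is $a_0=\tau\bigl(\Int\sigma(\dot{|D_t|}\,|D_t|^{-1})-\res(\dot{|D_t|}\,|D_t|^{-1}\log|D_t|)\bigr)$ (up to the kernel term), which is a priori nonlocal and cannot be disposed of by a Wodzicki-residue computation. This is precisely why the paper's proof proceeds differently: it writes $\partial\Delta=\frac h2\Delta+DhD+\frac12\Delta h$, uses the trace property to get $\TR(\partial\Delta\,\Delta^{-z-1})=2\TR(h\Delta^{-z})$, identifies the finite part with the heat coefficient $a_{2p+1}(h,\Delta)$ via Lemma \ref{BGVlemma}, and then uses the parity-in-$\xi$ vanishing of odd heat coefficients from Lemma \ref{heatasymp} --- the route you mention only parenthetically and set aside.

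A second problem: even the residue you propose to compute would not succumb to your argument. In Proposition \ref{eta-reg} the odd-gamma trace identities apply because $D_t|D_t|^{-1}$ carries exactly one factor of $D_t$, so every term of its symbol is a product of an odd number of gamma matrices. Here $\dot{|D_t|}$ is determined by $\dot{|D_t|}\,|D_t|+|D_t|\,\dot{|D_t|}=\dot{D_t^2}$ with $\dot{D_t^2}=\frac h2 D_t^2+D_thD_t+\frac12 D_t^2h$ quadratic in $D_t$; consequently both $\dot{|D_t|}$ and $|D_t|^{-1}$ have symbols built from \emph{even} products of gammas, for which $\tr(I)=2$ and $\tr(\gamma^\mu\gamma^\nu)=2\delta^{\mu\nu}$ do not vanish. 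The parity pattern you hope to inherit from Proposition \ref{eta-reg} is simply not present, so the crux of your third step fails as well.
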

\begin{proof}
It is enough to show that  $\zeta'_{\Delta}(0)$ for $\Delta=D^2$  is  conformally invariant. By the Lemma (\ref{zetavar}) we have,
$$\partial\zeta_{\Delta}(z)=-z\TR(\partial \Delta\cdot \Delta^{-1}\Delta^{-z}),$$
so we have the following Laurent expansion around $z=0$:
$$\partial\zeta_{\Delta}(z)=-z\left(a_{-1}\frac{1}{z}+a_0+a_1z+\cdots\right).$$
Therefore 
\begin{equation}\label{zetavar2}
\partial\zeta_{\Delta}'(0)=\frac{d}{dz}\left.\partial\zeta_{\Delta}(z)\right|_{z=0}=-a_0=-\tau\left(\int\!\!\!\!\!\!-\sigma(\partial\Delta\Delta^{-1})-\frac{1}{2}res(\partial\Delta\Delta^{-1}\log\Delta)\right).
\end{equation}

Consider the conformal perturbation of the Dirac operator,
$$D_t=e^{\frac{th}{2}}De^{\frac{th}{2}}~~~h=h^*\in\mathcal{A}_{\theta}.$$
An easy computation gives
$$\partial\Delta=\left.\frac{d}{dt}D_t^2\right|_{t=0}=\frac{h}{2}D^2+DhD+\frac{1}{2}D^2h,$$
and therefore by inserting the above identity into (\ref{zetavar2}) we get
\begin{equation}
\partial\zeta_{\Delta}'(0)=-2\tau\left(\int\!\!\!\!\!\!-\sigma(h)-\frac{1}{2}\res(h\log\Delta)\right)=-2\f.p.\left.\TR(h\Delta^{-z})\right|_{z=0}.
\end{equation}

 Now using the Lemma (\ref{BGVlemma}) we have 
\begin{equation}\label{zetavar3}
\partial\zeta_{\Delta}'(0)=-2\f.p.\left.\Tr(h e^{-t\Delta})\right|_{t=0}=-2a_{2p+1}(h,\Delta)=-2\tau(\tr(h\beta_{2p+1})).
\end{equation}
By examining the proof of Lemma (\ref{heatasymp}), we see that $a_n=0$ for odd $n$ since the integrand involved is an odd function, hence the result is obtained.
\end{proof}
In the following we give an analogue of Polyakov anomaly formula \cite{Polyakov} for the spactral triple $(C^{\infty}(\mathbb{T}^2_{\theta}),\mathcal{H},\slashed{\partial})$. 
Note that although $\log\det(\Delta)=-\zeta_{\Delta}'(0)$ is not local\footnote{Roughly, it means that it can not be written as an integral of finite number of homogenous terms of the symbol of $\Delta$. } (see Proposition (\ref{Laurent exp})),  the difference between $\log\det_{\Delta_h}$ of the conformally perturbed Laplacian and $\log\det_{\Delta}$ can be given by a local formula. This is of course an example of  the local nature of  anomalies in quantum field theory. Here we only express this difference as an integrated anomaly and refer the reader to \cite{Connes-Moscovici2014} for further computations and  interpretation of the formula.

\begin{prop}{\bf( A conformal anomaly formula)}  
Consider the spectral triple $(C^{\infty}(\mathbb{T}^2_{\theta}),\mathcal{H},D=\slashed{\partial})$, $\Delta=D^2$ and $\Delta_h=D_h^2$ where $D=e^{\frac{h}{2}}De^{\frac{h}{2}}$. The difference between $\log\det_{\Delta_h}$ of the conformally perturbed Laplacian and $\log\det_{\Delta}$ can be given by a local formula:
\begin{equation}\label{logdetpolyakov}
\log\det(\Delta_h)-\log\det(\Delta)=-\left(\zeta_{\Delta_h}'(0)-\zeta_{\Delta}'(0)\right)=-\int_0^1\frac{d}{dt}\zeta_{\Delta_t}'(0)dt,
\end{equation}
where $\Delta_t=D_t^2$,  $D=e^{\frac{th}{2}}De^{\frac{th}{2}}$.
\end{prop}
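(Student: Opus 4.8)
The plan is to differentiate along the conformal family and then read off a heat-trace coefficient, in close parallel with the proof of the preceding proposition. Since $\log\det(\Delta):=-\zeta'_{\Delta}(0)$ by the zeta regularization convention introduced above, the first equality in \eqref{logdetpolyakov} is a definition, and the second is the fundamental theorem of calculus applied to $t\mapsto\zeta'_{\Delta_t}(0)$, once this is known to be $C^1$ in $t$. Thus the real content is to exhibit $\frac{d}{dt}\zeta'_{\Delta_t}(0)$ as a local quantity. First I would apply Lemma \ref{zetavar} to the family $\Delta_t=D_t^2$, obtaining $\frac{d}{dt}\zeta_{\Delta_t}(z)=-z\,\TR\big(\dot{\Delta}_t\,\Delta_t^{-z-1}\big)$, and then, differentiating $D_t=e^{th/2}De^{th/2}$ exactly as in that proof, $\dot{\Delta}_t=\tfrac{h}{2}\Delta_t+D_thD_t+\tfrac12\Delta_t h$.

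Next I would use that $D_t$ commutes with $\Delta_t$, together with the trace property of $\TR$ on operators of non-integer order (valid for all $z$ outside a discrete set): cycling the outer $D_t$ in the middle term and cycling $\Delta_t$ in the two outer terms reduces the right-hand side to $\TR\big(\dot{\Delta}_t\,\Delta_t^{-z-1}\big)=2\,\TR\big(h\,\Delta_t^{-z}\big)$, hence $\frac{d}{dt}\zeta_{\Delta_t}(z)=-2z\,\TR(h\,\Delta_t^{-z})$. Since $h$ is a zeroth-order multiplication operator $\Wres(h)=0$, so by Proposition \ref{Laurent exp} the map $z\mapsto\TR(h\,\Delta_t^{-z})$ is holomorphic at $z=0$ and differentiating $-2z\,\TR(h\,\Delta_t^{-z})$ in $z$ at the origin gives $\frac{d}{dt}\zeta'_{\Delta_t}(0)=-2\,\f.p.\left.\TR\big(h\,\Delta_t^{-z}\big)\right|_{z=0}$. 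Now Lemma \ref{BGVlemma}, with $A=h$ and $Q=\Delta_t$, converts this into the heat-trace finite part $-2\,\f.p.\left.\Tr\big(h\,e^{-s\Delta_t}\big)\right|_{s=0}$, which by Lemma \ref{heatasymp} and the remark on inserting a multiplication operator equals $-2\,a_2(\Delta_t,h)=-2\,\tau\big(\tr(h\,\beta_2(\Delta_t))\big)$, the coefficient of $s^0$ in the $n=2$ heat expansion of $\Tr(h\,e^{-s\Delta_t})$. Substituting back,
$$\log\det(\Delta_h)-\log\det(\Delta)=2\int_0^1\tau\big(\tr(h\,\beta_2(\Delta_t))\big)\,dt,$$
which is local in the sense that, by the construction in the proof of Lemma \ref{heatasymp}, $\beta_2$ is an explicit integral over $\xi$ and the resolvent contour of finitely many homogeneous components of the symbol of $\Delta_t$.

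The step I expect to require the most care is the legitimacy of interchanging $\frac{d}{dt}$ with the analytic continuation in $z$ and the evaluation at $z=0$: one must check that $\{\Delta_t\}$ and $\{\dot{\Delta}_t\Delta_t^{-z-1}\}$ are holomorphic families of symbols whose Laurent data depend smoothly on $t$ with bounds uniform for $t\in[0,1]$, so that $\frac{d}{dt}\zeta'_{\Delta_t}(0)$ really is the $O(z)$-coefficient computed above. A secondary, purely bookkeeping, issue is the kernel of $D_t$: since $\ker D_t=e^{-th/2}\ker D$ has $t$-independent dimension, the zeta function and its variation are taken with the kernel excluded, and with that convention the identity $\TR(\dot{\Delta}_t\Delta_t^{-z-1})=2\TR(h\Delta_t^{-z})$ and all of the above go through, just as the $\Tr(A\Pi_Q)$ contribution was treated in \eqref{zetavar2}.
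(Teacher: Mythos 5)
Your proposal is correct and follows essentially the same route as the paper: the identity itself is obtained from the definition $\log\det(\Delta)=-\zeta'_{\Delta}(0)$, Lemma \ref{zetavar}, and the fundamental theorem of calculus along the family $\Delta_t$, which is exactly the paper's proof. Your further reduction of the integrand to $-2\,\tau\bigl(\tr(h\,\beta_2(\Delta_t))\bigr)$ via cyclicity of $\TR$, Lemma \ref{BGVlemma} and Lemma \ref{heatasymp} reproduces the computation the paper records in the remark immediately following the proposition (and in the proof of the preceding conformal-invariance proposition), so it is consistent with, rather than divergent from, the paper's treatment.
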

 \begin{proof}
 The proof follows from the Lemma (\ref{zetavar}) and fundamental theorem of calculus along the family $\Delta_t$.
 \end{proof}
 \begin{rmk}
 \normalfont
 Note that, by using the similar argument used in derivation of  the equations \eqref{zetavar2} and \eqref{zetavar3} we see  that the integrand in equation 
 \eqref{logdetpolyakov}  is given by
 \begin{align*}
  &\frac{d}{dt}\zeta_{\Delta_t}'(0)=-\tau\left(\int\!\!\!\!\!\!-\sigma(\frac{d}{dt}\Delta_t\Delta_{t}^{-1})-\frac{1}{2}\res(\frac{d}{dt}\Delta_t\Delta_{t}^{-1}\log\Delta_{t})\right)=\\
 & -2a_{2}(h,\Delta_t)=-2\tau\left(\tr(h\beta_{2}(\Delta_t))\right).
 \end{align*}
of course, computing the density $\tr(\beta_{2}(\Delta_t))$ requires considerable amount of computations (see \cite{Connes-Moscovici2014} and \cite{Farzad-Masoud2013}).
 \end{rmk}
 \section{Fermionic functional determinant and  the induced Chern-Simons term}
       Consider a closed manifold $M$ and the classical fermionic action defined by
        $$S_{\fer}(\bar{\psi},\psi)=(\bar{\psi},D\psi),$$
    where $\bar{\psi}$ and $\psi $ are fermion fields on $M$ and $D=i\gamma^{\mu}\nabla_{\mu}$ is the Dirac operator .
    The fermionic partition function is given by
    $$\mathcal{Z}_{\fer}=\int e^{-S_{\fer}(\bar{\psi},\psi)}[\mathcal{D}\bar{\psi}][\mathcal{D}\psi],$$
    where $[\mathcal{D}\bar{\psi}]$ and $[\mathcal{D}\psi]$ are formal measures on the space of fermions. The partition function $\mathcal{Z}$ should be thought as the $\det(D)$.
    The one-loop fermionic effective action is defined as 
    $$\mathcal{W}:=\log(\mathcal{Z})=\log\det(D).$$ 
    Similar to bosonic case, 
       after choosing a spectral cut $L_{\phi}$ \footnote{ There are  two choices of spectral cut $L_{\phi}$ for the Dirac operator: in upper or lower half plane}, by definition we have (see \eqref{Qz}):
   $$\zeta_D(z)=TR(D_{\phi}^{-z}),$$
   and hence one can define
  \begin{align}
      \mathcal{W}=\log\det(D):=-\zeta_D'(0).
  \end{align}
   Note that due to choice of a spectral cut, there is an ambiguity involved in above definition of one-loop effective action. Let $\zeta_D ^{\uparrow}(z)$ and $\zeta_D ^{\downarrow}(z)$ be the spectral zeta functions corresponding to a choice of $\phi$ in upper and lower half plane respectively. a quick computation shows that (cf. \cite{Ponge-assym})
   \begin{align*}
   \zeta_D ^{\uparrow}(z)-\zeta_D ^{\downarrow}(z)=(1-e^{-i\pi z})\zeta_D ^{\uparrow}(z)-(1-e^{-i\pi z})\eta_{D}(z),
\end{align*}    
and the measure of ambiguity in the effective action is given by
\begin{align}
\zeta_D ^{\uparrow}(0) '-\zeta_D ^{\downarrow}(0)'=i\pi\zeta_D ^{\uparrow}(0)+i\pi\eta_D(0).
\end{align}
In $2p+1$ dimensions $\zeta_D ^{\uparrow}(0)=\zeta_D ^{\downarrow}(0)=0$ and hence
\begin{align*}
\zeta_D ^{\uparrow}(0) '-\zeta_D ^{\downarrow}(0)'=i\pi\eta_D(0).
\end{align*}
Therefore the ambiguity is given by the non local quantity $\eta_D(0)$ which also depends on the gauge field coupled to the Dirac operator. The measure of this dependence
can be given by a local formula which in physics literature is referred to as the induced Chern-Simons term generated by the coupling of a massless fermion to a classical gauge field (cf. e.g. \cite{QFT-analytic}).
    
   Here, we give an analogue of this local term for the coupled Dirac operator on noncommutative 3-torus. 
    We consider the  operator  $D=\slashed{\partial}+\slashed{A}$ on $\T^3$ and compute the variation of the eta invariant $\eta_{D}(0)$ with respect to the vector potential. 
    
    First we state the following lemma. The proof in commutative case also works in noncommutative setting with minor changes and we will not reproduce it here (see \cite{Paycha-Rosenberg2006}).
\begin{lem}\label{res-asym}
 In the asymptotic expansion of the heat kernel for a positive elliptic differential operator $A\in\Psi_{cl}^{*}(C^{\infty}(\mathbb{T}^n_{\theta}))$ of $ord(A)=\alpha$,
$$\int_{\mathbb{R}^n}\sigma(e^{-tA})(\xi)d\xi\sim\sum_{i=0}^{\infty}\beta_i t^{\frac{i-n}{\alpha}},$$
one has
$$\res(A^{-k})=\frac{\alpha}{(k-1)!}\beta_{n-\alpha k},$$
where $k\in\mathbb{Z}^{+}$ and $\beta_{n-\alpha k}=0$ if $\alpha k\notin\mathbb{Z}$.
\end{lem}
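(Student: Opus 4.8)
The plan is to relate the complex power $A^{-k}$ to the Mellin transform of the heat semigroup $e^{-tA}$ at the level of Connes' symbols, and then to extract the homogeneous component of degree $-n$. Since everything in sight is insensitive to a finite-rank smoothing perturbation of $A$, I would first reduce to the case that $A$ is positive \emph{and} invertible, with spectrum in a half-line $[c,\infty)$, $c>0$ (otherwise replace $A$ by $A+\Pi_A$). Write $\alpha=\ord(A)$ and recall from the proof of Lemma \ref{heatasymp} that $\sigma(e^{-tA})(\xi)\sim\sum_{j\ge 0}e_j(t,\xi)$ with $e_j(t,\xi)=\frac{1}{2\pi i}\int_\gamma e^{-t\lambda}r_j(\xi,\lambda)\,d\lambda$, where $r_j$ is the degree $-\alpha-j$ homogeneous resolvent symbol and $\gamma$ is a contour encircling the spectrum.

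Starting from the Mellin representation $A^{-k}=\frac{1}{(k-1)!}\int_0^\infty t^{k-1}e^{-tA}\,dt$, valid for positive invertible $A$, I would pass to symbols to obtain
$$\sigma(A^{-k})(\xi)=\frac{1}{(k-1)!}\int_0^\infty t^{k-1}\,\sigma(e^{-tA})(\xi)\,dt,$$
the interchange of the symbol map with the $t$-integral being licensed by the uniform resolvent estimates built into the parametrix construction. A short homogeneity computation, parallel to the one in Lemma \ref{heatasymp}, gives the scaling identity $e_j(t,\xi)=t^{j/\alpha}f_j(t^{1/\alpha}\xi)$, where $f_j(\eta):=\frac{1}{2\pi i}\int_\gamma e^{-\lambda}r_j(\eta,\lambda)\,d\lambda$ and $\int_{\mathbb{R}^n}f_j(\eta)\,d\eta=\beta_j$. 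In particular $\frac{1}{(k-1)!}\int_0^\infty t^{k-1}e_j(t,\xi)\,dt$ is homogeneous in $\xi$ of degree $-\alpha k-j$, so by uniqueness of the homogeneous expansion of the classical symbol $\sigma(A^{-k})$ (of order $-\alpha k$) we get, for each $j\ge 0$,
$$\sigma_{-\alpha k-j}(A^{-k})(\xi)=\frac{1}{(k-1)!}\int_0^\infty t^{k-1}e_j(t,\xi)\,dt=\frac{\alpha}{(k-1)!}\int_0^\infty u^{\alpha k+j-1}f_j(u\xi)\,du,$$
the last step being the substitution $u=t^{1/\alpha}$.

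The degree $-n$ component of $\sigma(A^{-k})$ is the term with $j=j_0:=n-\alpha k$. If $j_0<0$ (that is, $\alpha k>n$) there is no such component and $A^{-k}$ has order $<-n$, so $\res(A^{-k})=0$; if $\alpha k\notin\mathbb{Z}$ there is likewise no homogeneous term of the integer degree $-n$; in both cases $\beta_{n-\alpha k}=0$ by convention and the asserted identity is trivial. When $j_0\ge 0$ the last display with $j=j_0$ reads $\sigma_{-n}(A^{-k})(\xi)=\frac{\alpha}{(k-1)!}\int_0^\infty u^{n-1}f_{j_0}(u\xi)\,du$, and integrating over the unit sphere and passing to polar coordinates $\eta=u\xi$ yields
$$\res(A^{-k})=\int_{|\xi|=1}\sigma_{-n}(A^{-k})(\xi)\,d\xi=\frac{\alpha}{(k-1)!}\int_{\mathbb{R}^n}f_{j_0}(\eta)\,d\eta=\frac{\alpha}{(k-1)!}\,\beta_{n-\alpha k},$$
which is the claim.

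The one \emph{genuine obstacle} is the analytic bookkeeping in the second step: justifying the interchange of the symbol map with the Mellin integral, and the term-by-term behaviour of the Mellin transform on the asymptotic expansion of $\sigma(e^{-tA})$. This is, however, entirely parallel to the commutative computation in \cite{Paycha-Rosenberg2006}, the only change being that Connes' pseudodifferential calculus replaces the classical one, and all the needed uniform estimates are already part of the setup of Lemma \ref{heatasymp} and of the canonical-trace machinery of Section~3. A somewhat slicker alternative would be to apply Proposition \ref{thmLaurent} to the holomorphic family $\sigma(A^{-k-z})$, together with the density-level Mellin relation between $\Int\sigma(A^{-s})(\xi)\,d\xi$ and $\int_{\mathbb{R}^n}\sigma(e^{-tA})(\xi)\,d\xi$, and to read off the residue at $s=k$ from the short-time expansion $\sum_i\beta_i\,t^{(i-n)/\alpha}$.
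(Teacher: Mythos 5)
The paper does not actually prove this lemma: it states ``The proof in commutative case also works in noncommutative setting with minor changes and we will not reproduce it here (see \cite{Paycha-Rosenberg2006})''. Your proposal therefore supplies an argument the authors deliberately omit, and the argument you give is in fact the standard one (Mellin transform of the heat semigroup at the symbol level, scaling of the resolvent symbols, matching homogeneities, polar coordinates) that one would find by unpacking the reference to \cite{Paycha-Rosenberg2006} and transplanting it to Connes' calculus. I checked the details and they are right: the Mellin representation $A^{-k}=\frac{1}{(k-1)!}\int_0^\infty t^{k-1}e^{-tA}\,dt$, the scaling identity $e_j(t,\xi)=t^{j/\alpha}f_j(t^{1/\alpha}\xi)$ obtained from the homogeneity $r_j(s\xi,s^{\alpha}\lambda)=s^{-\alpha-j}r_j(\xi,\lambda)$ and the substitution $\mu=t\lambda$ (after deforming the dilated contour back to $\gamma$), the resulting homogeneity of $\frac{\alpha}{(k-1)!}\int_0^\infty u^{\alpha k+j-1}f_j(u\xi)\,du$ of degree $-\alpha k-j$, and the polar change of variables $\eta=u\xi$ collapsing $\int_{|\xi|=1}\int_0^\infty u^{n-1}f_{j_0}(u\xi)\,du\,d\xi$ to $\int_{\mathbb{R}^n}f_{j_0}(\eta)\,d\eta=\beta_{j_0}$, all go through, and the degenerate cases ($n-\alpha k<0$ or $\alpha k\notin\mathbb{Z}$) are handled correctly. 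You are right to flag as the only nontrivial analytic point the interchange of the symbol map with the $t$-integral and the termwise treatment of the expansion; this is exactly the ``minor changes'' the authors wave at, and it follows from the locally uniform symbol estimates already invoked in the proof of Lemma \ref{heatasymp}. Your alternative suggestion at the end --- using Proposition \ref{thmLaurent} on the holomorphic family $\sigma(A^{-k-z})$ and reading the residue at $s=k$ off the short-time expansion --- is a cleaner route in the framework of Section~3, and in some ways is more in the spirit of the canonical-trace formalism the paper sets up, but either route is fine.
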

\qed

Consider the family $D_t=\slashed{\partial}+\slashed{A}_t$ on $\T^3$. By performing the variation with respect to $A_{\mu}$ we have,
\begin{equation}
\partial D=\gamma^{\mu}\partial A_{\mu}.
\end{equation}

For the family of Dirac operators $\{D_t\}$, a proof similar to the proof of Proposition (\ref{eta-coupled}) shows that $\eta_{D_t}(z)$ is regular at $z=0$ and therefore along this family, $\eta_{D_t}(0)$ makes sense. By using Lemma (\ref{eta-var}) we obtain: 
\begin{equation}
\partial\eta(0)=\Wres\left(\partial D|D|^{-1}\right)=\Wres\left(\gamma^{\mu}\partial A_{\mu}|D|^{-1}\right).
\end{equation}

Now by using Lemma (\ref{res-asym}) and the fact that $\gamma^{\mu}\partial A_{\mu}$ is a zero order operator it follows that the variation of the eta invariant with respect to the gauge field $A_{\mu}$ is given by the following local formula,
\begin{equation}
\partial\eta_{D}(0)=\Wres\left(\gamma^{\mu}\partial A_{\mu}|D|^{-1}\right)=\tau \left(\tr\left(\gamma^{\mu}\partial A_{\mu}\beta_{2}\right)\right).
\end{equation}

\def\polhk#1{\setbox0=\hbox{#1}{\ooalign{\hidewidth
  \lower1.5ex\hbox{`}\hidewidth\crcr\unhbox0}}} \def\cprime{$'$}

\end{document}